\newcommand{\dps}{\displaystyle}
\newcommand{\bu}{{\bf u}}
\newcommand{\uhb}{\bu_{h,{\beta}}}
\newcommand{\hbu}{\hat{\bf u}}
\newcommand{\td}{{\tilde\Delta}}
\newcommand{\pr}{\partial}
\newcommand{\pt}{\partial_t}
\newcommand{\bw}{{\bf w}}
\newcommand{\e}{{\bf e}}
\newcommand{\bJ}{{\bf J}}
\newcommand{\bU}{{\bf U}}
\newcommand{\tbU}{\tilde{\bU}}
\newcommand{\bv}{{\bf v}}
\newcommand{\bta}{\mbox{\boldmath $\eta$}}
\newcommand{\hbta}{\hat{\bta}}
\newcommand{\tbta}{\tilde{\bta}}
\newcommand{\bphi}{\mbox{\boldmath $\phi$}}
\newcommand{\bxi}{\mbox{\boldmath $\xi$}}
\newcommand{\tbxi}{\tilde{\bxi}}
\newcommand{\hbxi}{\hat{\bxi}}
\newcommand{\bV}{{\bf V}}
\newcommand{\bW}{{\bf W}}
\newcommand{\bH}{{\bf H}}
\newcommand{\bL}{{\bf L}}
\newcommand{\f}{{\bf f}}
\newcommand{\tf}{\tilde{\f}}
\newcommand{\g}{{\bf g}}
\newcommand{\ve}{\varepsilon}
\begin{document}

\newcommand{\se}{\setcounter{equation}{0}}
\def\theequation{\thesection.\arabic{equation}}

\newtheorem{theorem}{Theorem}[section]
\newtheorem{cdf}{Corollary}[section]
\newtheorem{lemma}{Lemma}[section]
\newtheorem{remark}{Remark}[section]

\title
{ \large\bf Backward Euler method for the Equations of Motion \\
             Arising in Oldroyd Fluids of Order One \\
            with Nonsmooth Initial Data }

\author{Deepjyoti Goswami {\footnote{
Department of Mathematics, Federal University of Paran\'a,
Centro Polit\'ecnico, Curitiba,
Cx.P: 19081, CEP: 81531-990,PR, Brazil.}}
and 
Amiya K. Pani {\footnote{
Department of Mathematics, Industrial Mathematics Group,
Indian Institute of Technology Bombay, Powai, Mumbai-400076, India.}}}

\maketitle

\begin{abstract}
In this paper, a backward Euler method is discussed for the equations of motion arising in 
the 2D Oldroyd model of viscoelastic fluids of order one with the forcing term independent of 
time or in $L^{\infty}$ in time. It is shown that the estimates of the discrete solution in Dirichlet norm is bounded uniformly in time.
Optimal {\it a priori} error estimate in $\bL^2$-norm is derived for the discrete problem with non-smooth initial data. This estimate is shown to be
uniform in time, under the assumption of uniqueness condition.
\end{abstract}

\vspace{1em} 
\noindent
{\bf Key Words}. Viscoelastic fluids, Oldroyd fluid of order one, backward Euler method, uniform in time bound,
optimal and uniform error estimates, non-smooth initial data.

\section{Introduction}
In this paper, we consider fully-discrete approximations to the equations of motion arising in the Oldroyd fluids (see Oldroyd\cite{Old}, Oskolkov\cite{Os89}) of order one:
\begin{eqnarray}\label{om}
~~\frac {\partial \bu}{\partial t}+\bu\cdot\nabla\bu-\mu\Delta\bu-\int_0^t \beta
 (t-\tau)\Delta\bu (\tau)\,d\tau+\nabla p=\f, ~~~~~\mbox {in}~ \Omega,~t>0
\end{eqnarray}
with incompressibility condition
\begin{eqnarray}\label{ic}
 \nabla \cdot \bu=0,~~~~~\mbox {on}~\Omega,~t>0,
\end{eqnarray}
and initial and boundary conditions
\begin{eqnarray}\label{ibc}
 \bu(x,0)=\bu_0~~\mbox {in}~\Omega,~~~\bu=0,~~\mbox {on}~\partial\Omega,~t\ge 0.
\end{eqnarray}
Here, $\Omega$ is a bounded domain in $\mathbb{R}^2$ with boundary $\partial \Omega$,
$\mu = 2 \kappa\lambda^{-1}>0$ and the kernel $\beta (t) = \gamma \exp (-\delta t),$
where $\gamma= 2\lambda^{-1}(\nu-\kappa \lambda^{- 1})>0$ and $\delta 
=\lambda^{-1}>0$. Further, $\f$ and $\bu_0$ are given functions in their
respective domain of definition. For more details, we refer to \cite{AS89} and \cite{Old}.

There is considerable amount of literature devoted to Oldroyd model by Oskolkov, 
Kotsiolis, Karzeeva, Sobolevskii etc, see \cite{AS89,EO92,KrKO91,KOS92,Os89} 
and recently by Lin {\it et al.} \cite{HLST,HLSST,WHL}, Pani {\it et al.} \cite{PY05, 
PYD06}, Wang {\it et al.} \cite{WHS}, and references, therein. A detailed report on the continuous and semi-discrete cases 
can be found in \cite{GP11}.

Literature for the fully-discrete approximations to the problem (\ref{om})-(\ref{ibc}) is, 
however, limited. In \cite{AO}, Akhmatov and Oskolkov have discussed stable and convergent
 finite difference schemes for the problem (\ref{om})-(\ref{ibc}). Recently in \cite{PYD06}, a 
linearized backward Euler method is used to discretize in temporal direction and semi-group 
theoretic approach is then employed to establish {\it a priori} error estimates.  The following 
error bounds are proved in \cite{PYD06} for $t_n>0$
$$ \|\bu(t_n)-\bU^n\| \le Ce^{-\alpha t_n}k $$
and
$$ \|\bu(t_n)-\bU^n\|_1 \le Ce^{-\alpha t_n}k(t_n^{-1/2}+\log\frac{1}{k}) $$
for smooth initial data and for zero forcing term. Here, $k$ is the time step and $\bU^n$ is 
the finite difference approximation to $\bu(t_n),$ when modified backward Euler method is 
applied in the temporal direction.  Recently Wang {\it et al.} \cite{WHS} have again applied 
backward Euler method for the problem (\ref{om})-(\ref{ibc}), with smooth initial data, when 
the forcing function is non-zero. They have used energy arguments along with uniqueness 
condition to obtain the following uniform error estimates:
$$ \|\bu(t_n)-\bU^n\| \le C(h^2+k) $$
and
$$ \tau^{1/2}\|\bu(t_n)-\bU^n\|_1 \le C(h+k), $$
where $\tau(t_n)= \min\{1,t_n\}$ and $h$ is the mesh size, again with smooth initial data.

Our present investigation is a continuation of \cite{GP11}, where {\it a priori} estimates and 
regularity results have been established, which are uniform in time under realistically 
assumed regularity on the exact solution and when $\f,\f_t\in L^\infty(\bL^2)$. Error 
estimates for semi-discrete Galerkin approximations have been shown to be optimal in 
$L^{\infty}(\bL^2)$-norm for non-smooth initial data. Further, uniform (in time) error 
estimates under uniqueness condition are also established.

In the present article, we discuss backward Euler method to discretize in the temporal variable and 
Galerkin approximations to discretize spatial variables for approximating solutions of the 
problem (\ref{om})-(\ref{ibc}). Our main aim, in this work, is to present optimal error estimate 
for the backward Euler method, when the initial data is non-smooth, that is, 
$\bu_0\in\bJ_1.$  The main results of this paper are follows:

\begin{itemize}
\item [(i)] Proving uniform bound in time in the Dirichlet  norm for the solution of the completely discrete backward Euler method.
\item [(ii)]  Deriving new estimates  which are valid uniformly in time for the error
associated with discrete linearized problem 
\item [(iii)] Establishing estimates for the error related to nonlinear part in which the error constant
depends exponentially in time and thereby, making final error estimate in the velocity to depend
on  exponentially in time.
\item [(iv)] Proving optimal error estimates for the velocity in $\bL^2$-norm which are uniform in time
under the uniqueness assumption.
\end{itemize}

To prove estimate in the Dirichlet norm for the discrete solution which is valid for all time, the usual tool, in the case of the Navier-Stokes equations, is to apply discrete version of uniform Gronwall's Lemma. 
Now for proving $(i),$ it is difficult to apply uniform Gronwall's Lemma due to presence of the discrete version of integral term. Therefore, a new way of looking at the proof has helped to achieve $(i),$ see; Lemma 4.3.  
For $(ii)-(iii),$ we observe that there are difficulties due to the non-linear term along 
with the presence of integral term in the case of non-smooth initial data. For example, the preliminary result ($L^{\infty}(\bL^2)$ estimate) is sub-optimal due to non-smooth initial data (see; Lemma \ref{pree}). 
In order to compensate the loss in the  order of convergence, a more
appropriate tool is  to multiply by $t.$  But, unfortunately, it fails here due to the presence of the integral term (or the summation term). To overcome this difficulty, we modify some tools from the error analysis of linear parabolic integro-differential equations  with non-smooth data  (see; \cite{PS98, PS198, TZ89}) to fit into the present problem and also a special care is taken to bound the nonlinear term. Our analysis makes use of the 
solution,say; $\bV^n$ of a linearized  discrete problem (see; (5.5)) as an  intermediate solution. Then, with its help,
we split the error: $\bu_h^n-\bU^n$ at time level $t=t_n,$ where $\bu_h^n=\bu_h(t_n)$ is the solution of the semi-discrete scheme at $t=t_n$ and $\bU^n$ is the solution of the backward Euler method, into two error components: one in $\bxi^n:=\bu_h^n-\bV^n,$ which denotes the contribution due to the linearized part (see; (\ref{errsplit})), and the other in $\bta^n:=\bU^n-\bV^n,$ which is due to the non-linearity  (see; (\ref{errsplit})). Using a backward discrete linear problem and duality type argument along with an estimate of $\hat{\bxi^n},$ where
$$\hat{\bxi^n}:= k \sum_{j=0}^n \bxi^j,$$
an $L^2$-estimate of $\bxi^n$ which is valid for all time is derived, refer to Theorm 5.1. For $L^2$ estimate of $\bta^n,$ we employ negative norm estimate and $L^2$ estimate of $\hat{\bta}^n$ and obtain estimate which depends on exponentially in time, see; Lemma 5.9. Thus, one of the main result for nonsmooth initial data that we have derived in Theorem 5.2 is as follows: 
\begin{equation} \label{main-velocity-estimate} 
\|\bu(t_n)-\bU^n\| \le K_T t_n^{-1/2}\big(h^2+k(1+\log \frac{1}{k})^{1/2}\big),
\end{equation}
where $K_T$ depends exponentially on time. 
Finally for the proof of $(iv),$ a careful use of the uniqueness condition, it is also shown that the error estimate (\ref{main-velocity-estimate}) is valid for all time.

The remaining part of this paper is organized as follows. In Section $2$, we discuss some notations, basic assumptions and  weak formulations. In Section $3$, a semidiscrete 
Galerkin method is discussed briefly. Section $4$ is devoted to backward Euler method. 
Optimal and uniform error bounds are obtained for the velocity when the initial 
data are in $\bJ_1.$ Finally, we summarize our results in the Section $5.$

\section{Preliminaries}
\se
For our subsequent use, we denote by bold face letters the $\mathbb{R}^2$-valued
function space such as
\begin{eqnarray*}
 {\bf H}_0^1 = [H_0^1(\Omega)]^2, \;\;\; {\bf L}^2 = [L^2(\Omega)]^2
 ~~\mbox {and }\;\; {\bf H}^m=[H^m(\Omega)]^2,
\end{eqnarray*}
where $H^m(\Omega)$ is the standard Hilbert Sobolev space of order $m$. Note
that $\bH^1_0$ is equipped with a norm
$$ \|\nabla\bv\|= \left(\sum_{i,j=1}^{2}(\partial_j v_i, \partial_j
 v_i)\right)^{1/2}=\left(\sum_{i=1}^{2}(\nabla v_i, \nabla v_i)\right)^{1/2}. $$
Further, we introduce some more function spaces for our future use:---
\begin{eqnarray*}
 {\bf J}_1 &=& \{{\bphi} \in {\bf {H}}_0^1 : \nabla \cdot \bphi = 0\}\\
 {\bf J}= \{\bphi \in {\bf {L}}^2 :\nabla \cdot \bphi &=& 0~~{\mbox {\rm in}}~~
 \Omega,\bphi\cdot{\bf{n}}|_{\pr \Omega}=0~~{\mbox {\rm holds}~~{\rm weakly}}\}, 
\end{eqnarray*}
where ${\bf {n}}$ is the outward normal to the boundary $\pr \Omega$ and $\bphi
\cdot {\bf {n}} |_{\pr \Omega} = 0$ should be understood in the sense of trace
in $\bH^{-1/2}(\partial \Omega)$, see \cite{temam}. Let $H^m/\mathbb{R}$ be the
quotient space consisting of equivalence classes of elements of $H^m$ differing
by constants, which is equipped with norm $\| p\|_{H^m /\mathbb{R}}=\| p+c\|_m$.
For any Banach space $X$, let $L^p(0, T; X)$ denote the space of measurable $X$
-valued functions $\bphi$ on  $ (0,T) $ such that
$$ \int_0^T \|\bphi (t)\|^p_X~dt <\infty~~~{\mbox {\rm if}}~~1 \le p < \infty, $$
and for $p=\infty$
$$ {\dps{ess \sup_{0<t<T}}} \|\bphi (t)\|_X <\infty~~~{\mbox {\rm if} }~~p=\infty. $$
Through out this paper, we make the following assumptions:\\
(${\bf A1}$). For ${\bf {g}} \in \bL^2$, let the unique pair of solutions $\{\bv
\in {\bf{J}}_1, q \in L^2 /R\} $ for the steady state Stokes problem
\begin{eqnarray*}
 -\Delta {\bv} + \nabla q = {\bf {g}},\\
 \nabla \cdot\bv = 0\;\;\; {\mbox {\rm in} }~~~\Omega,~~~~\bv|_{\pr \Omega}=0,
\end{eqnarray*}
satisfy the following regularity result
$$  \| \bv \|_2 + \|q\|_{H^1 /R} \le C\|{\bf {g}}\|. $$
\noindent
(${\bf A2}$). The initial velocity $\bu_0$ and the external force $\f$ satisfy for
positive constant $M_0,$ and for $T$ with $0<T \leq \infty$
$$ \bu_0\in {\bf J}_1,~\f,\f_t \in L^{\infty} (0, T ; {\bf L}^2)~~~{\mbox
 {\rm with}}~~~\|\bu_0\|_1 \le M_0,~~{\dps{\sup_{0<t<T} }}\big\{\|\f\|,
 \|\f_t\|\big\} \le M_0. $$


For our subsequent analysis, we present the following Lemma, which can be seen as a 
discrete version of Lemma 2.2 from \cite{PY05}.
\begin{lemma}
Let $g_i,\phi^i \in \mathbb{R},~1\le i \le n,~n\in \mathbb{N}$ and $0<k<1$. Then the 
following estimate holds
$$  \Big(k\sum_{i=1}^n\big(k\sum_{j=1}^i g_{i-j} \phi^j\big)^2 \Big)^{1/2} \le 
\Big(k\sum_{i=1}^k |g_i|\Big) \Big(k\sum_{i=1}^n |\phi^i|^2\Big)^{1/2}. $$
\end{lemma}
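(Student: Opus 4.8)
The plan is to prove this discrete Young-type inequality for convolutions by recognizing that $k\sum_{j=1}^i g_{i-j}\phi^j$ is a discrete convolution, and that the stated bound is the discrete analogue of the classical estimate $\|g * \phi\|_{L^2} \le \|g\|_{L^1}\|\phi\|_{L^2}$. Concretely, I would set $a_i = k|g_i|$ and $b^j = \sqrt{k}\,|\phi^j|$, so that the right-hand side becomes $\big(\sum_{i=1}^n a_i\big)\big(\sum_{i=1}^n (b^i)^2\big)^{1/2}$ and each term on the left satisfies $\big|k\sum_{j=1}^i g_{i-j}\phi^j\big| \le \sum_{j=1}^i a_{i-j}\,\sqrt{k}\,|\phi^j|$ after pulling one factor of $\sqrt{k}$ out of each summand. (I note the upper limit "$k$" in "$k\sum_{i=1}^k|g_i|$" on the right-hand side is a typo for "$n$"; I will work with $\sum_{i=1}^n$.)

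First I would apply the Cauchy--Schwarz inequality to the inner sum by splitting the weight $a_{i-j}$ as $a_{i-j}^{1/2}\cdot a_{i-j}^{1/2}$:
\begin{equation*}
\Big(\sum_{j=1}^i a_{i-j}\,\sqrt{k}\,|\phi^j|\Big)^2 \le \Big(\sum_{j=1}^i a_{i-j}\Big)\Big(\sum_{j=1}^i a_{i-j}\,k\,|\phi^j|^2\Big) \le \Big(\sum_{m=0}^{n-1} a_m\Big)\Big(\sum_{j=1}^i a_{i-j}\,k\,|\phi^j|^2\Big).
\end{equation*}
Then I would multiply by $k$, sum over $i=1,\dots,n$, and interchange the order of summation in the double sum $\sum_{i=1}^n\sum_{j=1}^i$, rewriting it as $\sum_{j=1}^n\sum_{i=j}^n$. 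For fixed $j$, the inner sum $\sum_{i=j}^n a_{i-j} = \sum_{m=0}^{n-j} a_m \le \sum_{m=0}^{n-1} a_m$, which again is controlled by $k\sum_{m=1}^n|g_m|$ (accounting for the index shift; since $g_0$ may or may not be present depending on convention, I would be careful that the factor $\sum_{m=0}^{n-1}a_m$ matches $k\sum_{i=1}^n|g_i|$, and if the indexing of the $g$'s starts at $1$ one simply relabels). This yields
\begin{equation*}
k\sum_{i=1}^n\Big(k\sum_{j=1}^i g_{i-j}\phi^j\Big)^2 \le \Big(k\sum_{m=1}^n|g_m|\Big)\Big(k\sum_{m=1}^n|\phi^m|^2\Big)\Big(k\sum_{j=1}^n k|\phi^j|^2\Big)\Big/\big(k\sum|\phi|^2\big),
\end{equation*}
and after cleaning up the bookkeeping one arrives at $k\sum_{i=1}^n(k\sum_{j=1}^i g_{i-j}\phi^j)^2 \le \big(k\sum_{m=1}^n|g_m|\big)^2\big(k\sum_{m=1}^n|\phi^m|^2\big)$; taking square roots gives the claim.

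The only real care needed — and the step I expect to be most error-prone rather than genuinely hard — is the bookkeeping of the index shifts and the powers of $k$: tracking that exactly one factor $\sqrt{k}$ accompanies each $|\phi^j|$, that the convolution weights reindex correctly under the swap $\sum_{i=1}^n\sum_{j=1}^i = \sum_{j=1}^n\sum_{i=j}^n$, and that the partial sum $\sum_{m=0}^{n-j}|g_m|$ is always dominated by the full sum $\sum_{m=1}^n|g_m|$ (using nonnegativity of $|g_m|$). The condition $0<k<1$ is not actually needed for the inequality itself and is presumably carried along only for consistency with its later use. No other machinery is required; this is a self-contained application of Cauchy--Schwarz plus Fubini for finite sums.
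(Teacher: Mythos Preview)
Your approach is correct and is exactly the standard proof of the discrete Young inequality: split $|g_{i-j}|=|g_{i-j}|^{1/2}|g_{i-j}|^{1/2}$, apply Cauchy--Schwarz to the inner sum, then interchange the order of summation. The paper itself does not give a proof of this lemma; it merely states it as the discrete analogue of Lemma~2.2 in \cite{PY05}, whose continuous proof is precisely the one you outline (Cauchy--Schwarz followed by Fubini). So there is nothing to compare --- your argument is the intended one.

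Two small points of bookkeeping worth tightening before you write it up cleanly. First, as you noticed, when $j$ runs from $1$ to $i$ the index $i-j$ runs from $0$ to $i-1$, so $g_0$ appears; either set $g_0$ by convention or shift the index range so the bound reads $k\sum_{m=0}^{n-1}|g_m|$. Second, the displayed inequality in your write-up with the factor ``$\big/\big(k\sum|\phi|^2\big)$'' is garbled; after the swap of summation the bound is simply
\[
k\sum_{i=1}^n\Big(k\sum_{j=1}^i g_{i-j}\phi^j\Big)^2 \le \Big(k\sum_m|g_m|\Big)^2\Big(k\sum_{j=1}^n|\phi^j|^2\Big),
\]
and square roots finish it. You are also right that the hypothesis $0<k<1$ plays no role in the inequality.
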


\section{Semidiscrete  Galerkin Approximations}
\se
From now on, we denote $h$ with $0<h<1$ to be a real positive discretization
parameter tending to zero. Let  ${\bf H}_h$ and $L_h$, $0<h<1$ be two family of
finite dimensional subspaces of ${\bf H}_0^1 $ and $L^2$, respectively,
approximating velocity vector and the pressure. Assume that the following
approximation properties are satisfied for the spaces $\bH_h$ and $L_h$: \\
${\bf (B1)}$ For each $\bw \in {\bf {H}}_0^1 \cap {\bf {H}}^2 $ and $ q \in
H^1/R$ there exist approximations $i_h w \in {\bf {H}}_h $ and $ j_h q \in
L_h $ such that
$$
 \|\bw-i_h\bw\|+ h \| \nabla (\bw-i_h \bw)\| \le K_0 h^2 \| \bw\|_2,
 ~~~~\| q - j_h q \|_{L^2 /R} \le K_0 h \| q\|_{H^1 /R}.
$$
Further, suppose that the following inverse hypothesis holds for $\bw_h\in\bH_h$:
\begin{align}\label{inv.hypo}
 \|\nabla \bw_h\| \leq  K_0 h^{-1} \|\bw_h\|.
\end{align}
For defining the Galerkin approximations, set for $\bv, \bw, \bphi \in \bH_0^1$,
$$ a(\bv, \bphi) = (\nabla \bv, \nabla \bphi) $$
and
$$ b(\bv, \bw,\bphi)= \frac{1}{2} (\bv \cdot \nabla \bw , \bphi)
   - \frac{1}{2} (\bv \cdot \nabla \bphi, \bw). $$
Note that the operator $b(\cdot, \cdot, \cdot)$ preserves the antisymmetric property of
the original nonlinear term, that is,
$$ b(\bv_h, \bw_h, \bw_h) = 0 \;\;\; \forall \bv_h, \bw_h \in {\bH}_h. $$
Now,the semidiscrete Galerkin formulation reads as:  Find $\bu_h(t)
\in {\bf H}_h$ and $p_h(t) \in L_h$ such that $ \bu_h(0)= \bu_{0h} $ and for $t>0$
\begin{eqnarray}\label{dwfh}
 (\bu_{ht}, \bphi_h) +\mu a (\bu_h,\bphi_h) &+& b(\bu_h,\bu_h,\bphi_h)+ a(\uhb, 
\bphi_h) -(p_h, \nabla \cdot \bphi_h) =(\f, \bphi_h), \nonumber \\
&&(\nabla \cdot \bu_h, \chi_h) =0,
\end{eqnarray}
for $\bphi_h\in{\bf H}_h,~\chi_h \in L_h$. Here $\bu_{0h} \in {\bf H}_h $ is a suitable 
approximation of $\bu_0\in {\bf J}_1$ and
\begin{equation}\label{uhb}
\uhb(t)=\int_0^t \beta(t-s) \bu_h(s)~ds.
\end{equation}

\noindent In order to consider a discrete space analogous to $\bJ_1$, we
impose the discrete incompressibility condition on $\bH_h$ and call it as
$\bJ_h$. Thus, we define $\bJ_h,$ as
$$ {\bf J}_h = \{ v_h \in {\bf H}_h : (\chi_h,\nabla\cdot v_h)=0
 ~~~\forall \chi_h \in L_h \}. $$
Note that $\bJ_h$ is not a subspace of $\bJ_1$. With $\bJ_h$ as above, we now introduce
an equivalent Galerkin formulation as: Find $\bu_h(t)\in {\bf J}_h $ such that $\bu_h(0) =
\bu_{0h} $ and for $t>0$
\begin{eqnarray}\label{dwfj}
~~~~ (\bu_{ht},\bphi_h) +\mu a (\bu_h,\bphi_h) + a(\uhb, \bphi_h)
= -b( \bu_h, \bu_h, \bphi_h)+(\f,\bphi_h)~~\forall \bphi_h \in {\bf J}_h.
\end{eqnarray}

Since $\bJ_h$ is finite dimensional, the problem (\ref{dwfj}) leads to a system of
nonlinear integro-differential equations. For global existence of a solution pair
of (\ref{dwfj}), we refer to \cite{PY05}. Uniqueness (of $p$) is obtained on the quotient
space $L_h/N_h$, where
$$ N_h=\{q_h\in L_h:(q_h, \nabla\cdot\bphi_h)=0,\forall \bphi_h\in{\bf H}_h\}. $$
The norm on $ L_h/N_h $ is given by
$$
 \| q_h\|_{L^2/N_h} = {\dps{\inf_{\chi_h \in N_h} }} \| q_h + \chi_h\|. $$
For continuous dependence of the discrete pressure $p_h (t) \in L_h/N_h$ on the
discrete velocity $u_h(t) \in {\bf J}_h$, we assume the following discrete
inf-sup (LBB) condition for the finite dimensional spaces $\bH_h$ and $L_h$:\\
\noindent
${\bf (B2')}$  For every $q_h \in L_h$, there exists a non-trivial function
$\bphi_h \in {\bf H}_h$ and a positive constant $K_0,$ independent of $h,$
such that
$$ |(q_h, \nabla\cdot \bphi_h)| \ge K_0 \|\nabla \bphi_h \|\| q_h\|_{L^2/N_h}. $$
Moreover, we also assume that the following approximation property holds true
for ${\bf J}_h $. \\
\noindent
${\bf (B2)}$ For every $\bw \in {\bf J}_1 \cap {\bf H}^2, $ there exists an
approximation $r_h \bw \in {\bf J_h}$ such that
$$ \|\bw-r_h\bw\|+h \| \nabla (\bw - r_h \bw) \| \le K_5 h^2 \|\bw\|_2 . $$
This is a less restrictive condition than (${\bf B2'}$) and it has been used to
derive the following properties of the $L^2$ projection $P_h:\bL^2\mapsto \bJ_h$.
We now state without proof these results. For a proof, see \cite{HR82}. For $\bphi
\in \bJ_h$, note that
\begin{equation}\label{ph1}
 \|\bphi- P_h \bphi\|+ h \|\nabla P_h \bphi\| \leq C h\|\nabla \bphi\|,
\end{equation}
and for $\bphi \in \bJ_1 \cap \bH^2,$
\begin{equation}\label{ph2}
 \|\bphi-P_h\bphi\|+h\|\nabla(\bphi-P_h \bphi)\|\le C h^2\|\td\bphi\|.
\end{equation}
We now define the discrete  operator $\Delta_h: \bH_h \mapsto \bH_h$ through the
bilinear form $a (\cdot, \cdot)$ as
\begin{eqnarray}\label{do}
 a(\bv_h, \bphi_h) = (-\Delta_h\bv_h, \bphi)~~~~\forall \bv_h, \bphi_h\in\bH_h.
\end{eqnarray}
Set the discrete analogue of the Stokes operator $\td =P \Delta $ as
$\td_h = P_h \Delta_h $. Using Sobolev imbedding and Sobolev inequality, it is
easy to prove the following Lemma
\begin{lemma}\label{nonlin}
Suppose conditions (${\bf A1}$), (${\bf B1}$) and (${\bf  B2}$) are satisfied. Then there 
exists a positive constant $K$ such that for $\bv,\bw,\bphi\in\bH_h$, the following holds:
\begin{equation}\label{nonlin1}
 |(\bv\cdot\nabla\bw,\bphi)| \le K \left\{
\begin{array}{l}
 \|\bv\|^{1/2}\|\nabla\bv\|^{1/2}\|\nabla\bw\|^{1/2}\|\Delta_h\bw\|^{1/2}
 \|\bphi\|, \\
 \|\bv\|^{1/2}\|\Delta_h\bv\|^{1/2}\|\nabla\bw\|\|\bphi\|, \\
 \|\bv\|^{1/2}\|\nabla\bv\|^{1/2}\|\nabla\bw\|\|\bphi\|^{1/2}
 \|\nabla\bphi\|^{1/2}, \\
 \|\bv\|\|\nabla\bw\|\|\bphi\|^{1/2}\|\Delta_h\bphi\|^{1/2}, \\
 \|\bv\|\|\nabla\bw\|^{1/2}\|\Delta_h\bw\|^{1/2}\|\bphi\|^{1/2}
 \|\nabla\bphi\|^{1/2}.
\end{array}\right.
\end{equation}
\end{lemma}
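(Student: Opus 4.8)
The statement to prove is Lemma \ref{nonlin}, which collects five estimates for the trilinear term $|(\bv\cdot\nabla\bw,\bphi)|$ with $\bv,\bw,\bphi\in\bH_h$, involving fractional powers of $\bL^2$-, Dirichlet-, and discrete-Laplacian norms. These are discrete analogues of the classical Ladyzhenskaya-type bounds for the convection term in two dimensions.

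\medskip

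\noindent\textbf{Proof plan.}
The plan is to start from the single scalar inequality that underlies all five bounds, namely the 2D Ladyzhenskaya (Gagliardo--Nirenberg) inequality
$$
\|\bv\|_{\bL^4} \le C\,\|\bv\|^{1/2}\,\|\nabla\bv\|^{1/2}\qquad\forall\,\bv\in\bH^1_0,
$$
together with the generalized H\"older inequality
$$
|(\bv\cdot\nabla\bw,\bphi)| \le \|\bv\|_{\bL^4}\,\|\nabla\bw\|_{\bL^2}\,\|\bphi\|_{\bL^4},
$$
and the variant $|(\bv\cdot\nabla\bw,\bphi)|\le \|\bv\|_{\bL^4}\,\|\nabla\bw\|_{\bL^4}\,\|\bphi\|_{\bL^2}$ obtained by moving a derivative, valid on the discrete space. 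First I would record these two product splittings. The right-hand side of each of the five cases is then obtained by choosing which of the three factors receives the $\bL^4$ norm and then trading the $\bL^4$ norm either for $\|\cdot\|^{1/2}\|\nabla\cdot\|^{1/2}$ (pure Ladyzhenskaya) or, when the next higher norm is needed, for $\|\cdot\|^{1/2}\|\Delta_h\cdot\|^{1/2}$. The last swap rests on the discrete elliptic-type bound $\|\nabla\bv_h\|\le C\,\|\bv_h\|^{1/2}\,\|\Delta_h\bv_h\|^{1/2}$ for $\bv_h\in\bH_h$, which follows from the definition \eqref{do} of $\Delta_h$: indeed $\|\nabla\bv_h\|^2=a(\bv_h,\bv_h)=(-\Delta_h\bv_h,\bv_h)\le\|\Delta_h\bv_h\|\,\|\bv_h\|$.

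\medskip

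\noindent\textbf{The five cases, in order.}
Case~1 (lines 1 and 5 being symmetric in the roles of $\bw$ and a shifted derivative): write $|(\bv\cdot\nabla\bw,\bphi)|\le\|\bv\|_{\bL^4}\|\nabla\bw\|_{\bL^4}\|\bphi\|$ after integration by parts (legitimate since $\bv,\bw,\bphi\in\bH_h\subset\bH^1_0$ and one keeps $b$ in the antisymmetric split, or directly since the bound is for the raw term on the finite-dimensional space), apply Ladyzhenskaya to $\bv$, and to $\nabla\bw$ use $\|\nabla\bw\|_{\bL^4}\le C\|\nabla\bw\|^{1/2}\|\Delta_h\bw\|^{1/2}$; this gives $\|\bv\|^{1/2}\|\nabla\bv\|^{1/2}\|\nabla\bw\|^{1/2}\|\Delta_h\bw\|^{1/2}\|\bphi\|$. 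Case~2: keep $|(\bv\cdot\nabla\bw,\bphi)|\le\|\bv\|_{\bL^4}\|\nabla\bw\|\|\bphi\|_{\bL^4}$ is the wrong split; instead use $\|\bv\|_{\bL^\infty}\le C\|\bv\|^{1/2}\|\Delta_h\bv\|^{1/2}$ (2D Agmon-type inequality, discrete version via $\Delta_h$) together with $|(\bv\cdot\nabla\bw,\bphi)|\le\|\bv\|_{\bL^\infty}\|\nabla\bw\|\|\bphi\|$, yielding $\|\bv\|^{1/2}\|\Delta_h\bv\|^{1/2}\|\nabla\bw\|\|\bphi\|$. Case~3: split $|(\bv\cdot\nabla\bw,\bphi)|\le\|\bv\|_{\bL^4}\|\nabla\bw\|\|\bphi\|_{\bL^4}$ and apply Ladyzhenskaya to both $\bv$ and $\bphi$: $\|\bv\|^{1/2}\|\nabla\bv\|^{1/2}\|\nabla\bw\|\|\bphi\|^{1/2}\|\nabla\bphi\|^{1/2}$. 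Case~4: from Case~3 replace $\|\nabla\bphi\|^{1/2}$ by $C\|\bphi\|^{1/4}\|\Delta_h\bphi\|^{1/4}$ via the discrete elliptic bound and absorb the extra $\|\bphi\|^{1/4}$ into the leading $\|\bphi\|^{1/2}$, giving $\|\bv\|\|\nabla\bw\|\|\bphi\|^{1/2}\|\Delta_h\bphi\|^{1/2}$ — here the $\|\nabla\bv\|^{1/2}$ is upgraded to $\|\bv\|^{1/2}$ the same way, i.e. apply the elliptic bound to $\bv$ and absorb. Case~5: integrate by parts to move the derivative onto $\bphi$, obtaining a term like $|(\bv\cdot\nabla\bphi,\bw)|$, then split $\le\|\bv\|\|\nabla\bw\|_{\bL^4}\|\bphi\|_{\bL^4}$ ... more directly, apply H\"older as $\|\bv\|\,\|\nabla\bw\|_{\bL^4}\,\|\bphi\|_{\bL^4}$ and use $\|\nabla\bw\|_{\bL^4}\le C\|\nabla\bw\|^{1/2}\|\Delta_h\bw\|^{1/2}$, $\|\bphi\|_{\bL^4}\le C\|\bphi\|^{1/2}\|\nabla\bphi\|^{1/2}$, giving the stated $\|\bv\|\|\nabla\bw\|^{1/2}\|\Delta_h\bw\|^{1/2}\|\bphi\|^{1/2}\|\nabla\bphi\|^{1/2}$.

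\medskip

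\noindent\textbf{Main obstacle.}
The delicate point is that $\bH_h$ is \emph{not} a subspace of $\bH^2$ and $\bJ_h\not\subset\bJ_1$, so one cannot apply the continuous Agmon / $\bL^4$-of-gradient estimates to discrete functions directly; the whole point of working with $\Delta_h$ rather than $\td$ or the continuous Laplacian is that the integration-by-parts identity $\|\nabla\bv_h\|^2=(-\Delta_h\bv_h,\bv_h)$ holds \emph{by definition} \eqref{do} on $\bH_h$. Thus the $\bL^4$-norm-of-gradient and $\bL^\infty$-norm bounds must be justified by combining the Sobolev imbedding $\bH^1\hookrightarrow\bL^4$ and $\bH^2\hookrightarrow\bL^\infty$ (valid in 2D) applied to suitable continuous representatives, with the inverse hypothesis \eqref{inv.hypo} and the approximation properties (${\bf B1}$), (${\bf B2}$) — this is exactly where conditions (${\bf A1}$), (${\bf B1}$), (${\bf B2}$) enter and is the part that genuinely uses the hypotheses of the Lemma (it is the content of the cited references such as \cite{HR82}). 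I expect that step — establishing $\|\nabla\bv_h\|_{\bL^4}\le C\|\nabla\bv_h\|^{1/2}\|\Delta_h\bv_h\|^{1/2}$ and $\|\bv_h\|_{\bL^\infty}\le C\|\bv_h\|^{1/2}\|\Delta_h\bv_h\|^{1/2}$ rigorously on the non-conforming discrete space — to be the only nontrivial ingredient; once it is in hand, the five inequalities are mechanical applications of H\"older plus the two interpolation bounds as laid out above, and the constant $K$ is the maximum of the finitely many constants produced.
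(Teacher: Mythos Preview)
The paper does not give a proof of this lemma; it merely remarks ``Using Sobolev imbedding and Sobolev inequality, it is easy to prove the following Lemma'' and states the result. Your approach --- H\"older's inequality combined with the 2D Ladyzhenskaya inequality $\|\bv\|_{\bL^4}\le C\|\bv\|^{1/2}\|\nabla\bv\|^{1/2}$, the discrete Agmon bound $\|\bv_h\|_{\bL^\infty}\le C\|\bv_h\|^{1/2}\|\Delta_h\bv_h\|^{1/2}$, and the gradient bound $\|\nabla\bv_h\|_{\bL^4}\le C\|\nabla\bv_h\|^{1/2}\|\Delta_h\bv_h\|^{1/2}$ --- is exactly what the paper has in mind, and your identification of the discrete-space subtlety (the need for (${\bf A1}$), (${\bf B1}$), (${\bf B2}$) to justify these bounds on $\bH_h$ via Heywood--Rannacher-type arguments) is correct.

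There is, however, a genuine error in your Case~4. Starting from Case~3 and replacing $\|\nabla\bphi\|^{1/2}$ by $\|\bphi\|^{1/4}\|\Delta_h\bphi\|^{1/4}$ leaves you with a factor $\|\bv\|^{1/2}\|\nabla\bv\|^{1/2}$ that you then claim can be ``upgraded to $\|\bv\|$'' by applying the elliptic bound to $\bv$. But the elliptic bound $\|\nabla\bv\|\le\|\bv\|^{1/2}\|\Delta_h\bv\|^{1/2}$ trades $\|\nabla\bv\|$ for a product containing $\|\Delta_h\bv\|$, and there is no way to bound $\|\nabla\bv\|$ by $\|\bv\|$ alone: on $\bH^1_0$ the inequality goes the other way (Poincar\'e). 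The correct route for Case~4 mirrors your Case~2: use H\"older in the form $|(\bv\cdot\nabla\bw,\bphi)|\le\|\bv\|\,\|\nabla\bw\|\,\|\bphi\|_{\bL^\infty}$ and then apply the discrete Agmon inequality to $\bphi$ to obtain $\|\bphi\|_{\bL^\infty}\le C\|\bphi\|^{1/2}\|\Delta_h\bphi\|^{1/2}$, which gives the stated bound directly.
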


\noindent Examples of subspaces $\bH_h$ and $L_h$ satisfying assumptions (${\bf B1}$),
(${\bf B2}'$), and (${\bf B2}$) can be found in \cite{GR, BP, BF}. \\
We present below, a Lemma, that deals with higher order estimates of $\bu_h,$ which will be
 useful in the error analysis of backward Euler method for non-smooth data.
 \begin{lemma}\label{dth2}
 Suppose conditions (${\bf A1}$), (${\bf B1}$), (${\bf B2}$) and (${\bf B4}$) are
 satisfied. Moreover, let $\bu_h(0)\in\bJ_h$ and $\f$ satisfy the assumption (${\bf A3}$).
Then, $\bu_h,$ the solutions of the semidiscrete Oldroyd problem  (\ref{dwfj}) satisfies the 
following {\it a priori} estimates:
\begin{align}
 \tau^*\|\bu_h\|_2^2+(\tau^*)^{r+1}\|\bu_{ht}\|_r^2 & \le K,~~~~r\in \{0,1\}, 
\label{dth11} \\
 e^{-2\alpha t}\int_0^t e^{2\alpha s}(\tau^*)^r(s)\|\bu_{hs}\|^2_r\,ds & \le K, ~~~~r\in 
\{0,1,2\}, \label{dth12} \\
  e^{-2\alpha t} \int_0^t e^{2\alpha s}(\tau^*)^{r+1}(s)\|\bu_{hss}\|_{r-1}^2~ds & \le 
 K,~~~~r\in \{-1,0,1\},  \label{dth13}
\end{align}
 where $(\tau^*)(t)=\min\{1,t\},~\sigma(t) = \tau^*(t) e^{2\alpha t}$ and K depends on
 the given data, but not on time $T$.
 \end{lemma}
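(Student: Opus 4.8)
The plan is to establish each of the three estimates \eqref{dth11}--\eqref{dth13} in turn, following the familiar energy-method bootstrap for the semidiscrete scheme \eqref{dwfj}, but carrying the exponential weight $e^{2\alpha t}$ and the weight $\tau^*(t)=\min\{1,t\}$ through each step so that the bounds hold uniformly in $T$. I would first take $\bphi_h=\Delta_h\bu_h$ (equivalently $-\td_h\bu_h$) in \eqref{dwfj} to get an identity for $\frac{d}{dt}\|\nabla\bu_h\|^2$ involving $\mu\|\Delta_h\bu_h\|^2$, the nonlinear term $b(\bu_h,\bu_h,\Delta_h\bu_h)$, the memory term $a(\uhb,\Delta_h\bu_h)$, and the forcing $(\f,\Delta_h\bu_h)$. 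The nonlinear term is controlled by the second bound in \eqref{nonlin1} from Lemma~\ref{nonlin}, namely $|(\bu_h\cdot\nabla\bu_h,\Delta_h\bu_h)|\le K\|\bu_h\|^{1/2}\|\Delta_h\bu_h\|^{3/2}\|\nabla\bu_h\|$, which after Young's inequality absorbs into $\mu\|\Delta_h\bu_h\|^2$ at the cost of a term $C\|\bu_h\|^2\|\nabla\bu_h\|^4$; here one invokes the already-known uniform-in-time $L^\infty(\bJ_1)$ and $L^2(\bJ_1\cap\bH^2)$-type bounds on $\bu_h$ (the analogues of the continuous a priori estimates from \cite{GP11}) to keep the coefficient bounded. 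The memory term is handled exactly as in the continuous case: write $\frac{d}{dt}\uhb=\beta(0)\bu_h-\delta\uhb$ and use Young's inequality plus the exponential decay of $\beta$, together with an auxiliary bound on $e^{-2\alpha t}\int_0^t e^{2\alpha s}\|\uhb\|_?^2\,ds$ obtained by the standard convolution-Gronwall trick (Lemma~2.1 in its continuous form, or \cite{PY05}). Multiplying the resulting differential inequality by $\tau^*$ (respectively by $\sigma=\tau^* e^{2\alpha t}$) and integrating, using that $(\tau^*)'\le 1$ and $(\tau^*)'=0$ for $t\ge 1$, yields \eqref{dth11} for $r=0$, i.e. the bound on $\tau^*\|\bu_h\|_2^2$, and also the $r=1$ piece $\int_0^t e^{2\alpha s}\|\bu_{hs}\|^2\,ds$-type control feeding into \eqref{dth12}.

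Next, for the time-derivative estimates, I would differentiate \eqref{dwfj} with respect to $t$ and set $\bphi_h=\bu_{ht}$ to obtain an equation for $\frac{d}{dt}\|\bu_{ht}\|^2+2\mu\|\nabla\bu_{ht}\|^2$. The differentiated nonlinear term splits as $b(\bu_{ht},\bu_h,\bu_{ht})+b(\bu_h,\bu_{ht},\bu_{ht})$; the second vanishes by antisymmetry and the first is estimated by the first or third line of \eqref{nonlin1}, absorbed into $\mu\|\nabla\bu_{ht}\|^2$ against a factor involving $\|\bu_h\|$, $\|\nabla\bu_h\|$, $\|\Delta_h\bu_h\|$ that is integrable in the weighted sense by \eqref{dth11}. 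The differentiated memory term $a(\beta(0)\bu_h-\delta\uhb,\bu_{ht})$ is linear and straightforward. Multiplying by $\sigma=\tau^* e^{2\alpha t}$ (and then by $(\tau^*)^2 e^{2\alpha t}$ for the higher-weight versions), integrating, and handling the boundary term at $t=0$ via the equation at $t=0$ to bound $\|\bu_{ht}(0)\|$ in terms of the data, gives $(\tau^*)^{2}\|\bu_{ht}\|^2\le K$, hence the $r=1$ case of \eqref{dth11}, as well as the weighted integral bounds $e^{-2\alpha t}\int_0^t e^{2\alpha s}(\tau^*)^r\|\bu_{hs}\|_r^2\,ds\le K$ for $r=0,1,2$ in \eqref{dth12} (the $r=2$ case requires pairing with $-\td_h\bu_{ht}$ rather than $\bu_{ht}$, using a higher line of \eqref{nonlin1} and the already-established $\tau^*\|\bu_h\|_2^2$ bound). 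Finally, \eqref{dth13} follows by differentiating twice, testing with the appropriate weighted $\bu_{hss}$ or $-\td_h\bu_{hss}$, and repeating the weighted Gronwall argument; the case $r=-1$ (negative-norm, i.e. testing against $\td_h^{-1}\bu_{hss}$ or using duality) is the mildest and handles the strongest singularity at $t=0$.

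The main obstacle I anticipate is twofold. First, the memory (integral) term prevents a direct application of the uniform Gronwall lemma — as the authors themselves flag in the introduction — so each weighted energy inequality must be closed by first deriving a separate weighted-$L^2$-in-time bound on $\uhb$ (and its derivatives) via the exponential kernel, and then feeding that back; keeping the constant $\alpha$ small enough relative to $\delta=\lambda^{-1}$ and $\mu$ so that the $\alpha$-shifted bilinear form $\mu a(\cdot,\cdot)+(\text{memory contribution})$ remains coercive is the crux. Second, the interplay between the weight $\tau^*$ (which caps at $1$) and the weight $e^{2\alpha t}$ (which grows) has to be managed carefully in the convolution estimates so that no factor of $T$ leaks in; concretely one repeatedly uses that for $s\le t$, $\tau^*(s)\le\tau^*(t)\cdot\max\{1,t/s\}$ is \emph{not} true, so instead one splits the time integral at $t=1$ and treats $t\le 1$ and $t\ge 1$ separately, on $t\ge 1$ using the unweighted ($\tau^*\equiv1$) estimates already available from the continuous/semidiscrete theory of \cite{GP11}. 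Modulo these points, the argument is a routine — if lengthy — weighted energy bootstrap, and I would organize the write-up as: (1) weighted bound on $\|\uhb\|_r$; (2) $H^2$-energy estimate giving \eqref{dth11}$_{r=0}$ and the $r=0,1$ parts of \eqref{dth12}; (3) first-time-derivative estimate giving \eqref{dth11}$_{r=1}$ and the $r=2$ part of \eqref{dth12}; (4) second-time-derivative estimate giving \eqref{dth13}.
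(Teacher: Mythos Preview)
Your overall strategy for \eqref{dth11}--\eqref{dth12} matches the paper, which simply defers those to the continuous-case arguments in \cite{GP11}; the weighted energy bootstrap you describe (test with $-\td_h\bu_h$, then differentiate once and test with $\bu_{ht}$ and $-\td_h\bu_{ht}$) is exactly the right template.

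The gap is in your treatment of \eqref{dth13}. You propose to \emph{differentiate twice} and test with $\bu_{hss}$ or $-\td_h\bu_{hss}$. This is both unnecessary and problematic: a twice-differentiated equation brings in $(\f_{tt},\bphi_h)$, but assumption ($\mathbf{A2}$) only gives $\f,\f_t\in L^\infty(\bL^2)$; it also forces you to control $\|\bu_{htt}(0)\|$ (or a negative norm of it) as an initial value, which is unavailable for $\bu_0\in\bJ_1$; and the extra nonlinear terms $b(\bu_{htt},\bu_h,\cdot)+2b(\bu_{ht},\bu_{ht},\cdot)$ demand bounds you have not yet secured at the right weights. The paper instead stays with the \emph{once}-differentiated equation \eqref{dwfjt} and changes the test function: setting $\bphi_h=(\tau^*)^2e^{2\alpha t}\bu_{htt}$ gives $(\tau^*)^2e^{2\alpha t}\|\bu_{htt}\|^2$ directly from the first term and $\frac{\mu}{2}\frac{d}{dt}\big((\tau^*)^2e^{2\alpha t}\|\nabla\bu_{ht}\|^2\big)$ from the second, yielding the $r=1$ case of \eqref{dth13} (and incidentally \eqref{dth11}$_{r=1}$) in one stroke. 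For $r=0$ and $r=-1$ one tests \eqref{dwfjt} with $-\tau^*e^{2\alpha t}\td_h^{-1}\bu_{htt}$ and $-e^{2\alpha t}\td_h^{-2}\bu_{htt}$ respectively; note the \emph{negative} powers of $\td_h$ in both cases, whereas you wrote $-\td_h\bu_{hss}$, which goes the wrong direction. So in your outline, step (4) should be replaced by: in the once-differentiated equation already obtained in step (3), test successively with $(\tau^*)^{r+1}e^{2\alpha t}(-\td_h)^{r-1}\bu_{htt}$ for $r=1,0,-1$.
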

 
 \begin{proof}
 The estimates (\ref{dth11})-(\ref{dth12}) can be proved as in the continuous case, see 
\cite{GP11}. For the final estimate, we differentiate (\ref{dwfj}) to find that, for $\bphi_h \in
 \bJ_h,$
 \begin{eqnarray}\label{dwfjt}
  (\bu_{htt},\bphi_h) +\mu a(\bu_{ht},\bphi_h) & + & \beta(0)a(\bu_h,\bphi_h)-\delta
  \int_0^t \beta(t-s) a(\bu_h(s), \bphi_h)~ds \nonumber \\
  & = & -b(\bu_{ht},\bu_h,\bphi_h)-b(\bu_h,\bu_{ht},\bphi_h)+(\f_t,\bphi_h).
 \end{eqnarray}
Taking $\bphi_h=(\tau^*)^2(t)e^{2\alpha t}\bu_{htt}$ in (\ref{dwfjt}), we obtain
\begin{align}\label{dth001}
(\tau^*)^2(t) & e^{2\alpha t}\|\bu_{htt}\|^2+\frac{\mu}{2}\frac{d}{dt}\big( (\tau^*)^2(t) 
e^{2 \alpha t}\|\bu_{ht}\|_1^2\big) \le \big(\alpha (\tau^*)^2(t)+\tau^*(t)\big) 
e^{2\alpha t} \|\bu_{ht}\|^2 \nonumber \\
& +\gamma (\tau^*)^2(t)e^{2\alpha t}\|\bu_h\|_2\|\bu_{htt}\|+\delta (\tau^*)^2(t) 
e^{2\alpha t}\int_0^t \beta(t-s) \|\bu_h(s)\|_2\|\bu_{htt}\|~ds \nonumber \\
&+(\tau^*)^2(t)e^{2\alpha t}\big(|b(\bu_{ht},\bu_h,\bu_{htt})|+ 
|b(\bu_h,\bu_{ht},\bu_{htt})|+\|f_t\|\|\bu_{htt}\|\big)
\end{align}
Use (\ref{nonlin1}) to find that
\begin{align*}
|b(\bu_{ht},\bu_h,\bu_{htt})|+ |b(\bu_h,\bu_{ht},\bu_{htt})| \le \frac{1}{4} 
\|\bu_{htt}\|^2+K\|\bu_{ht}\|_1^2\|\bu_h\|_2^2.
\end{align*}
Now, using (\ref{dth11})-(\ref{dth12}), we can easily deduce from (\ref{dth001}) that
\begin{equation}\label{dth005}
 (\tau^*)^2\|\bu_{ht}\|_1^2+\mu e^{-2\alpha t}\int_0^t 
 (\tau^*)^2(s) e^{2\alpha s}\|\bu_{hss}\|^2~ds \le K.
\end{equation}
We set $\bphi_h=-\tau^*(t) e^{2\alpha t}\td_h^{-1}\bu_{htt}$ in (\ref{dwfjt}). From 
(\ref{nonlin1}), we see that
$$ b(\bu_{ht},\bu_h,\td_h^{-1}\bu_{htt}) \le K\|\bu_{ht}\|^{1/2}\|\bu_{ht}\|_1^{1/2}
   \|\bu_h\|_1\| \bu_{htt}\|_{-1} $$
and therefore
\begin{align*}
 \mu\frac{d}{dt}(\tau^*(t) e^{2\alpha t}\|\bu_{ht}\|^2) +\tau^*(t) e^{2\alpha t} 
\|\bu_{htt}\|_{-1}^2 \le \big(2\alpha \tau^*(t)+1\big) e^{2\alpha t}\|\bu_{ht}\|_1^2 \\
+C(\mu,\gamma) \tau^*(t) e^{2\alpha t} \|\nabla\bu_h\|^2 
+2\|\f_t\|^2 +C(\mu,\delta)(\int_0^t \beta(t-s) e^{\alpha t}\|\td_h\bu_h(s)\|~ds)^2 \\
+C(\mu) \tau^*(t) e^{2\alpha t}\Big(\|\nabla\bu_h\|^2\|\bu_{ht}\|^2 
+\|\nabla\bu_{ht}\|^2(1+ \|\bu_h\|\|\nabla\bu_h\|)\Big).
\end{align*}
Integrate with respect to time and multiply by $e^{-2\alpha t}$ to conclude
\begin{equation}\label{dth006}
 \tau^*(t)\|\bu_{ht}\|^2+\mu e^{-2\alpha t}\int_0^t \tau^*(s) e^{2\alpha s} 
\|\bu_{hss}\|_{-1}^2 ds \le K.
\end{equation}
Finally, we set $\bphi_h= -e^{2\alpha t}\td_h^{2}\bu_{htt}$ in (\ref{dwfjt}) and proceed as 
above to arrive at
\begin{equation}\label{dth007}
\|\bu_{ht}\|_{-1}^2+\mu e^{-2\alpha t}\int_0^t e^{2\alpha s} 
\|\bu_{hss}\|_{-2}^2 ds \le K.
\end{equation}
This completes the rest of the proof.
 \end{proof}
\noindent The following semi-discrete error estimates are proved in \cite{GP11}.
\begin{theorem}\label{errest}
Let $\Omega$ be a convex polygon and let the conditions (${\bf A1}$)-(${\bf A2}$) and (${\bf 
B1}$)-(${\bf B2}$)
be satisfied. Further, let the discrete initial velocity $\bu_{0h}\in \bJ_h$ with
$\bu_{0h}=P_h\bu_0,$ where $\bu_0\in \bJ_1.$ Then,
there exists a positive constant $C$ such that for $0<T<\infty $ with $t\in (0,T]$
$$ \|(\bu-\bu_h)(t)\|+h\|\nabla(\bu-\bu_h)(t)\|\le Ce^{Ct}h^2t^{-1/2}.$$
Moreover, under the assumption of the uniqueness condition, that is,
\begin{equation}\label{uc}
 \frac{N}{\nu^2}\|\f\|_{\infty} < 1~~~\mbox{and}~~N 
=\sup_{\bu, \bv,{\bf w}\in\bH_0^1(\Omega)}\frac{b(\bu,\bv,{\bf 
w})}{\|\nabla\bu\|\|\nabla\bv\| \|\nabla{\bf w}\|},
\end{equation}
where $\nu=\mu+\frac{\gamma}{\delta}$ and $\|\f\|_{\infty} :=\|\f\|_{L^\infty(0, 
\infty; \bL^2(\Omega))}$ then we have the following uniform estimate:
$$ \|(\bu-\bu_h)(t)\| \le Ch^2t^{-1/2}. $$
\end{theorem}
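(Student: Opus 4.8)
\noindent\textit{Sketch of proof.} The plan is to compare $\bu_h$ not with $\bu$ directly but with a suitable projection of $\bu$ into $\bJ_h$, and then to recover the full $\bL^2$-order from the non-smooth data by parabolic (integro-differential) smoothing, while keeping careful track of the memory term. First I would introduce, for $t\in(0,T]$, a time-dependent generalised Stokes--Volterra projection $(V_h\bu,\zeta_h)\in\bJ_h\times L_h$ of the pair $(\bu,p)$, defined by
\begin{equation*}
\mu\,a(\bu-V_h\bu,\bphi_h)+a\big(\widetilde{(\bu-V_h\bu)},\bphi_h\big)-(p-\zeta_h,\nabla\cdot\bphi_h)=0,\qquad(\nabla\cdot(\bu-V_h\bu),\chi_h)=0
\end{equation*}
for all $\bphi_h\in\bH_h,\ \chi_h\in L_h$, where $\widetilde{\bw}(t):=\int_0^t\beta(t-s)\bw(s)\,ds$; the resolvent of $\beta$ is used to solve the associated Volterra problem in $t$. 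Using (${\bf A1}$), (${\bf B1}$), (${\bf B2}$), (${\bf B2'}$) together with the regularity of $(\bu,p)$ (the continuous analogues of Lemma~\ref{dth2}, including the $\tau^*$-weighted bounds on $\bu_t$ and $\bu_{tt}$), one obtains
\begin{equation*}
\|(\bu-V_h\bu)(t)\|+h\|\nabla(\bu-V_h\bu)(t)\|\le Ch^2\Big(\|\bu(t)\|_2+\|p(t)\|_{H^1/R}+\int_0^t\big(\|\bu\|_2+\|p\|_{H^1/R}\big)\,ds\Big)
\end{equation*}
and the corresponding bound for $\pt(\bu-V_h\bu)$. Writing $\e=\bu-\bu_h=\brho+\btheta$ with $\brho=\bu-V_h\bu$ and $\btheta=V_h\bu-\bu_h\in\bJ_h$, the term $\brho$ is then controlled directly (with the weights forced by $\bu_0\in\bJ_1$), so everything reduces to estimating $\btheta$.

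Subtracting (\ref{dwfj}) from the continuous weak formulation tested against $\bphi_h\in\bJ_h$ and using the defining property of $V_h$ to annihilate the linear viscous, memory and pressure residuals, one is left with
\begin{equation*}
(\btheta_t,\bphi_h)+\mu\,a(\btheta,\bphi_h)+a(\widetilde{\btheta},\bphi_h)=-(\brho_t,\bphi_h)-\big[b(\bu,\bu,\bphi_h)-b(\bu_h,\bu_h,\bphi_h)\big],
\end{equation*}
the nonlinear difference being rewritten, after symmetrisation, as $b(\brho,\bu,\bphi_h)+b(\btheta,\bu,\bphi_h)+b(\bu_h,\brho,\bphi_h)+b(\bu_h,\btheta,\bphi_h)$. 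Testing with $\bphi_h=\btheta$ annihilates $b(\bu_h,\btheta,\btheta)$ by antisymmetry, the memory term $a(\widetilde{\btheta},\btheta)$ is controlled by a convolution (Young-type) inequality --- the lemma of Section~2, the discrete analogue of Lemma~2.2 of \cite{PY05}, or its continuous counterpart --- together with Gr\"onwall, and the $\brho$-terms are absorbed as data via Lemma~\ref{nonlin} and the a priori bounds $\tau^*\|\bu\|_2^2\le K$, $\sup_t\|\nabla\bu\|\le K$. The remaining term is estimated, using Lemma~\ref{nonlin} and Young's inequality, as $|b(\btheta,\bu,\btheta)|\le\frac{\mu}{4}\|\nabla\btheta\|^2+C\|\nabla\bu\|^2\|\btheta\|^2$, which feeds $\|\nabla\bu\|^2$ into a Gr\"onwall inequality and is the sole source of the factor $e^{Ct}$. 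This crude energy estimate is, however, sub-optimal for $\bu_0\in\bJ_1$: at leading order $\int_0^t\|\brho_s\|^2\,ds$ is of size $h^2\int_0^t\|\bu_s\|_1^2\,ds$, which is unbounded as $s\to0$, so a weighted, duality-type refinement is indispensable.

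\textbf{Main obstacle.} For the optimal rate one wants to multiply through by $\tau^*(t)$ (or a suitable power of it), but $\frac{d}{dt}\big(\tau^*\widetilde{\btheta}\big)$ produces the term $\tau^*(t)\int_0^t\beta'(t-s)\btheta(s)\,ds$ in which the outer weight $\tau^*(t)$ does not pass inside the convolution to yield a $\tau^*(s)$-weighted quantity; the memory term thus obstructs the usual ``multiply by $t$'' device. The way around this, and the technical heart of the proof, is to adapt the non-smooth-data machinery for linear parabolic integro-differential equations (\cite{PS98,PS198,TZ89}): introduce the time-integrated error $\he(t):=\int_0^t\e(s)\,ds$ (equivalently $\hat{\btheta}$), whose governing equation has right-hand side data bounded by $O(h^2)$ uniformly down to $t=0$ (time-integration turns $\brho_t$ into $\brho(t)-\brho(0)$); establish $\bL^2$- and $\bH^1$-estimates for $\hat{\btheta}$ by an energy argument; and then recover the pointwise bound $(\tau^*(t))^{1/2}\|\btheta(t)\|\le Ce^{Ct}h^2$ by a bootstrap --- for instance by testing the $\btheta$-equation with $\td_h^{-1}\btheta$ and with $\tau^*\btheta$ and combining with the bound on $\|\hat{\btheta}\|$ and the smoothing estimates (\ref{dth11})--(\ref{dth13}) (and their continuous counterparts). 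Throughout, the nonlinear contributions must be re-estimated with the sharp forms of Lemma~\ref{nonlin} so that every singular factor $(\tau^*)^{-1}$ is matched by a compensating power and the $e^{Ct}$ factor is incurred only through the genuinely quadratic term. The hardest point will be to keep this bookkeeping consistent, that is, to carry the $t$-weights through the memory term and the quadratic nonlinearity simultaneously.

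Finally, the Dirichlet-norm estimate follows at once: $h\|\nabla\brho\|\le Ch^2(\tau^*)^{-1/2}$ from the projection bounds and $\tau^*\|\bu\|_2^2\le K$, while $h\|\nabla\btheta\|\le C\|\btheta\|$ by the inverse hypothesis (\ref{inv.hypo}), so the $\bL^2$-bound for $\btheta$ yields the bound for $h\|\nabla(\bu-\bu_h)\|$ at the same rate. For the uniform-in-time statement one re-runs the $\btheta$-estimates with Gr\"onwall's lemma replaced by the uniqueness condition (\ref{uc}): under it $\|\nabla\bu(t)\|\le\nu^{-1}\|\f\|_\infty$ uniformly, hence $|b(\btheta,\bu,\btheta)|\le\frac{N}{\nu}\|\f\|_\infty\|\nabla\btheta\|^2<\nu\|\nabla\btheta\|^2$, and this term is \emph{strictly} absorbed by the combined viscous--memory dissipation (handled with an integrating factor $e^{2\alpha s}$ as in the proof of Lemma~\ref{dth2}); the resulting differential inequality carries no growing factor, the exponential weights cancel on integration, and one obtains $\|(\bu-\bu_h)(t)\|\le Ch^2(\tau^*(t))^{-1/2}$ with $C$ independent of $T$ --- i.e.\ the stated $Ch^2t^{-1/2}$ for $0<t\le1$, reducing to $Ch^2$ for $t\ge1$.
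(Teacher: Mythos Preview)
The paper does not prove this theorem at all: immediately before the statement it says ``The following semi-discrete error estimates are proved in \cite{GP11},'' and no argument is given. So there is no in-paper proof to compare your proposal against.

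That said, your sketch is in the right spirit and lines up well with the machinery the present paper uses for the \emph{fully discrete} problem (and which is inherited from \cite{GP11}): the time-integrated ``hat'' quantity to circumvent the failure of the ``multiply by $t$'' trick in the presence of the memory term, a duality/negative-norm step, and the replacement of Gr\"onwall by the uniqueness condition (\ref{uc}) for the uniform-in-time bound. Two small remarks. First, your use of a Stokes--Volterra projection that simultaneously kills the viscous, memory, and pressure residuals is a clean choice; an equivalent route (closer to what \cite{PY05,GP11} do) is to use the ordinary Stokes projection and carry the memory residual as a source term controlled by the convolution lemma you cite --- either works. Second, deducing $h\|\nabla\btheta\|\le C\|\btheta\|$ from the inverse hypothesis is legitimate but coarse; a direct $\bH^1$-estimate for $\btheta$ (test with $-\td_h\btheta$, weighted by $\tau^*$) is the more standard route and avoids relying on (\ref{inv.hypo}) for the final bound.
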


\section{Backward Euler Method}
\se

For time discretization, we state below some notations. Let $k,~0<k<1,$ be the time 
step and let $t_n=nk,~n\ge 0.$ We define for a sequence $\{\bphi^n\}_{n
\ge 0}\subset\bJ_h,$
$$ \pt\bphi^n=\frac{1}{k}(\bphi^n-\bphi^{n-1}). $$
For continuous function $\bv(t),$ we set $\bv_n=\bv(t_n).$ 
Since backward Euler method is of first order in time, we choose the right
rectangle rule to approximate the integral term in (\ref{dwfj}) as:
\begin{equation}\label{rrr}
 q_r^n(\bphi)=k\sum_{j=1}^{n}\beta_{n-j}\phi^j\approx \int_0^{t_n}
 \beta(t_n-s)\bphi(s)~ds
\end{equation}
where $\beta_{n-j}=\beta(t_n-t_j).$ With $w_{nj}= k\beta(t_n-t_j),$ it is observed that the 
the right rectangle rule is positive in the sense that
\begin{equation}\label{rrp}
k\sum_{i=1}^n q_r^i(\phi)\phi^i= k\sum_{i=1}^n k\sum_{j=0}^i 
\omega_{ij}\phi^j \phi^i \ge 0,~~~~\phi=(\phi^0,\cdots,\phi^N)^T.
\end{equation}
For positivity of the rectangle rule with  $\omega_{n0}=0,$ we refer to  McLean and Thom{\'e}e \cite{MT}.
Note that the error incurred due to right rectangle rule in approximating the integral term is
\begin{align}\label{errrr}
\ve_r^n (\phi) & := \int_0^{t_n} \beta(t_n-s)\bphi(s)~ds-k\sum_{j=1}^{n}\beta_{n-j}\phi^j \\
&\le  Kk\sum_{j=1}^{n}\int_{t_{j-1}}^{t_j}\Big|\frac{\partial}{\partial s}(\beta(t_n-s) 
\bphi(s))\Big|~ds. \nonumber
\end{align}
We present here a discrete version of integration by parts. For sequences $\{a_i\}$ and 
$\{b_i\}$ of real numbers, the following summation by parts holds
\begin{equation}\label{sumbp}
k\sum_{j=1}^i a_jb_j= a_i\hat{b}_i-k\sum_{j=1}^{i-1} (\pt a_{j+1})\hat{b}_j,
\end{equation}
where $\hat{b}_i:=k\sum_{j=1}^i b_j.$ \\
We describe below the backward Euler scheme for the semidiscrete Oldroyd problem 
(\ref{dwfh}): Find $\{\bU^n\}_{n\ge 0}\in\bH_h$ and $\{P^n\}_{n\ge 1}\in L_h$ as 
solutions of the recursive nonlinear algebraic equations ($n\ge 1$) :
\begin{equation}\left.\begin{array}{rcl}\label{fdbeh}
 (\pt\bU^n,\bphi_h)+\mu a(\bU^n,\bphi_h) &+& a(q_r^n(\bU),\bphi_h)
 = (P^n,\nabla\cdot \bphi_h) \\
 &+& (\f^n,\bphi_h)-b(\bU^n,\bU^n,\bphi_h)~~~\forall\bphi_h\in\bH_h, \\
 (\nabla\cdot\bU^n,\chi_h)&=& 0~~~\forall \chi_h \in L_h,~~~n\ge 0.
\end{array}\right\}
\end{equation}
We choose $\bU^0=\bu_{0h}=P_h\bu_0.$ Now, for $\bphi_h\in\bJ_h,$ we seek 
$\{\bU^n\}_{n\ge 0}\in\bJ_h$ such that
\begin{equation}\label{fdbej}
 (\pt\bU^n,\bphi_h)+\mu a(\bU^n,\bphi_h)+a(q_r^n(\bU),\bphi_h)= (\f^n,\bphi_h)
 -b(\bU^n,\bU^n,\bphi_h)~~~\forall\bphi_h\in\bJ_h.
\end{equation}
Using variant of Brouwer fixed point theorem and standard uniqueness arguments, it is easy to show that the discrete
problem (\ref{fdbej}) is well-posed. For a proof, we refer to \cite{G11}. Below
we prove {\it a priori} bounds for the discrete solutions $\{\bU^n\}_{n>0}.$
\begin{lemma}\label{stb}
 Let $0<\alpha<\min\{\delta,\frac{\mu\lambda_1}{2}\}$ and $k_0>0$ be such that for
$0<k<k_0$
$$ 1+\big(\frac{\mu\lambda_1}{2}\big)k\ge e^{\alpha k}. $$
Further, let $\bU^0=\bu_{0h}=P_h\bu_0$ with $\bu_0\in\bJ_1.$ Then, the discrete 
solution $\bU^N,~N\ge 1$ of (\ref{fdbej}) satisfies the following estimates:
\begin{align}\label{stb1a}
\|\bU^N\|^2+\Gamma_1 e^{-\alpha t_N}k\sum_{n=1}^N e^{\alpha t_n}\|\nabla\bU^n\|^2 \le C
\Big(e^{-\alpha t_N}\|\bU^0\|^2+\|\f\|_{\infty}^2\Big),
\end{align}
where $\|\f\|_{\infty}=\|\f\|_{L^{\infty}(\bL^2)},$ and 
$$\Gamma_1= \Big(e^{-\alpha k}\mu-2\big(\frac{1-e^{-\alpha k}}{k}\big)
 \lambda_1^{-1}\Big).
$$
\end{lemma}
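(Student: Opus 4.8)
The plan is to derive an energy identity by testing the discrete equation \eqref{fdbej} with a suitably exponentially weighted multiple of $\bU^n$, and then to absorb the memory (summation) term using its positivity property \eqref{rrp}. First I would take $\bphi_h = e^{\alpha t_n}\bU^n$ in \eqref{fdbej}. Using the antisymmetry $b(\bU^n,\bU^n,\bU^n)=0$, the nonlinear term vanishes. The key algebraic step is to handle $(\pt\bU^n,\bU^n) = \frac{1}{k}(\bU^n-\bU^{n-1},\bU^n)$; by the identity $2(a-b,a) = |a|^2 - |b|^2 + |a-b|^2$ one gets $\frac{1}{2k}\big(\|\bU^n\|^2 - \|\bU^{n-1}\|^2 + \|\bU^n-\bU^{n-1}\|^2\big)$. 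Multiplying by $e^{\alpha t_n}$ and rearranging, $e^{\alpha t_n}(\pt\bU^n,\bU^n) \ge \frac{1}{2k}\pt\big(e^{\alpha t_n}\|\bU^n\|^2\big) - \frac{1}{2k}(e^{\alpha t_n}-e^{\alpha t_{n-1}})\|\bU^{n-1}\|^2$, where the last factor equals $\frac{e^{\alpha k}-1}{k}e^{\alpha t_{n-1}}\|\bU^{n-1}\|^2$; this is where the hypothesis relating $e^{\alpha k}$, $\mu$, $\lambda_1$ and $k$, together with the Poincar\'e inequality $\lambda_1\|\bU^{n-1}\|^2 \le \|\nabla\bU^{n-1}\|^2$, will be used to dominate this term by part of the viscous contribution $\mu e^{\alpha t_n}\|\nabla\bU^n\|^2$. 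A careful bookkeeping of the index shift $t_{n-1}$ vs.\ $t_n$ produces exactly the constant $\Gamma_1 = e^{-\alpha k}\mu - 2\big(\tfrac{1-e^{-\alpha k}}{k}\big)\lambda_1^{-1}$; note $\Gamma_1>0$ precisely under the standing restriction $0<\alpha<\min\{\delta,\mu\lambda_1/2\}$ and $0<k<k_0$.

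Next I would sum over $n=1,\dots,N$ and multiply through by $2k\,e^{-\alpha t_N}$. The telescoping of $\pt(e^{\alpha t_n}\|\bU^n\|^2)$ yields $\|\bU^N\|^2 - e^{-\alpha t_N}\|\bU^0\|^2$ on the left. The viscous terms give $\Gamma_1 e^{-\alpha t_N}k\sum_{n=1}^N e^{\alpha t_n}\|\nabla\bU^n\|^2$ (after the absorption just described). The memory term $k\sum_{n=1}^N e^{\alpha t_n}a(q_r^n(\bU),\bU^n)$ must be shown to be nonnegative: writing $e^{\alpha t_n}a(q_r^n(\bU),\bU^n) = a\big(k\sum_{j=1}^n e^{\alpha t_n}\beta_{n-j}\bU^j,\bU^n\big)$ and recalling $\beta(t)=\gamma e^{-\delta t}$, so $e^{\alpha t_n}\beta_{n-j} = \gamma e^{(\alpha-\delta)(t_n-t_j)}e^{\alpha t_j}$, one can factor out the weights $e^{\alpha t_j}$ onto the $\bU^j$'s and reduce to the positivity statement \eqref{rrp} applied to the sequence $(e^{\alpha t_j}\nabla\bU^j)$ with the positive-definite kernel $\gamma e^{(\alpha-\delta)(t_n-t_j)}$ (still a decaying exponential since $\alpha<\delta$); hence this whole term is $\ge 0$ and can simply be dropped.

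Finally, the forcing term $2k\,e^{-\alpha t_N}\sum_{n=1}^N e^{\alpha t_n}(\f^n,\bU^n)$ is estimated by Cauchy--Schwarz and Young's inequality: $2(\f^n,\bU^n) \le \frac{1}{\lambda_1\ve}\|\f^n\|^2 + \lambda_1\ve\|\bU^n\|^2 \le \frac{1}{\lambda_1\ve}\|\f\|_\infty^2 + \ve\|\nabla\bU^n\|^2$, choosing $\ve$ small enough (a fixed fraction of $\Gamma_1$) that the $\ve\|\nabla\bU^n\|^2$ piece is absorbed into the viscous term, leaving $\tfrac{\Gamma_1}{2}$ say. The remaining contribution is $\frac{1}{\lambda_1\ve}\|\f\|_\infty^2 \cdot e^{-\alpha t_N}k\sum_{n=1}^N e^{\alpha t_n} = \frac{1}{\lambda_1\ve}\|\f\|_\infty^2\cdot\frac{e^{\alpha k}(1-e^{-\alpha t_N})}{e^{\alpha k}-1}\le C\|\f\|_\infty^2$, using $k/(e^{\alpha k}-1)\le C(\alpha)$ for $0<k<k_0$. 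Combining everything gives \eqref{stb1a}. The main obstacle I anticipate is the memory-term positivity step: getting the exponential weights to split cleanly between the kernel and the test function so that \eqref{rrp} applies — in particular verifying that after pulling $e^{\alpha t_j}$ onto $\bU^j$ the residual kernel $\gamma e^{(\alpha-\delta)(t_n-t_j)}$ is still of the form covered by the McLean--Thom\'ee positivity result; the rest is a standard exponentially weighted discrete Gronwall-type bookkeeping.
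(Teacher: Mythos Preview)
Your overall plan (exponentially weighted energy estimate, antisymmetry to kill the nonlinearity, positivity of the discrete memory kernel, Young plus a geometric sum for the forcing) is the right one and is essentially the paper's argument. The paper carries it out slightly differently: it sets $\tbU^n=e^{\alpha t_n}\bU^n$, uses the identity $e^{\alpha t_n}\pt\bU^n=e^{\alpha k}\pt\tbU^n-\tfrac{e^{\alpha k}-1}{k}\tbU^n$, multiplies by $e^{-\alpha k}$ and tests with $\tbU^n$. This amounts to an $e^{2\alpha t_n}$ weight rather than your $e^{\alpha t_n}$, and it makes two of your bookkeeping points cleaner: the Poincar\'e absorption occurs at index $n$ (no $n{-}1$ shift, hence no stray $\|\nabla\bU^0\|^2$ term), and the exact constant $\Gamma_1$ drops out directly. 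With your weight the resulting coefficient on the gradient sum is $2\mu-\tfrac{e^{\alpha k}-1}{k}\lambda_1^{-1}$, not $\Gamma_1$; it is larger, so the stated inequality still follows, but your sentence ``produces exactly $\Gamma_1$'' is not right.

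The one genuine gap is the memory-term positivity. After your single weight you have
\[
k\sum_{n=1}^N k\sum_{j=1}^n \gamma\, e^{(\alpha-\delta)(t_n-t_j)}\,a\big(e^{\alpha t_j}\bU^j,\ \bU^n\big),
\]
and this is \emph{not} of the form in \eqref{rrp}: the inner slot carries $e^{\alpha t_j}\bU^j$ while the outer slot carries plain $\bU^n$, so you are pairing two different sequences and the McLean--Thom\'ee positivity does not apply as you claim. The fix is to split the weight symmetrically: write $e^{\alpha t_n}=e^{\alpha t_n/2}e^{\alpha t_j/2}e^{\alpha(t_n-t_j)/2}$, so the sum becomes
\[
k\sum_{n=1}^N k\sum_{j=1}^n \gamma\, e^{-(\delta-\alpha/2)(t_n-t_j)}\,a\big(e^{\alpha t_j/2}\bU^j,\ e^{\alpha t_n/2}\bU^n\big)\ \ge\ 0
\]
by \eqref{rrp} applied to $\phi^i=e^{\alpha t_i/2}\nabla\bU^i$ with the still-decaying kernel $\gamma e^{-(\delta-\alpha/2)s}$ (using $\alpha<\delta$). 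Alternatively, follow the paper and test with $\tbU^n$; then the memory term is $\gamma e^{-\alpha k}k\sum_i e^{-(\delta-\alpha)(t_n-t_i)}a(\tbU^i,\tbU^n)$, which is symmetric in $\tbU$ from the outset and \eqref{rrp} applies immediately. Either route closes the gap.
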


\begin{proof}
 Setting $\tbU^n=e^{\alpha t_n}\bU^n,$ we rewrite (\ref{fdbej}), for $\bphi_h\in\bJ_h,$ as
\begin{equation}\label{tfdbej}
 e^{\alpha t_n}(\pt\bU^n,\bphi_h)+\mu a(\tbU^n,\bphi_h)+e^{-\alpha 
t_n}b_h(\tbU^n,\tbU^n,
 \bphi_h)+e^{\alpha t_n}a(q_r^n(\bU),\bphi_h)= (\tf^n,\bphi_h).
\end{equation}
Note that
$$ e^{\alpha t_n}\pt\bU^n= e^{\alpha k}\pt\tbU^n-\Big(\frac{e^{\alpha
   k}-1}{k}\Big)\tbU^n. $$
On substituting this in (\ref{tfdbej}) and then multiplying the resulting equation by
$e^{-\alpha k},$ we obtain
\begin{align}\label{stb001}
 (\pt\tbU^n,\bphi_h)-\Big(\frac{1-e^{-\alpha k}}{k}\Big)(\tbU^n,\bphi_h)+e^{-\alpha k}
 \mu a(\tbU^n,\bphi_h)+e^{-\alpha t_{n+1}}b(\tbU^n,\tbU^n,\bphi_h) \nonumber \\
 +\gamma e^{-\alpha k}\sum_{i=1}^n e^{-(\delta-\alpha)(t_n-t_i)} a(\tbU^i,\bphi_h)
 =e^{-\alpha k}(\tf^n,\bphi_h).
\end{align}
Put $\bphi_h=\tbU^n$ in (\ref{stb001}) and observe that
$$ (\pt\bphi^n,\bphi^n)=\frac{1}{k}(\bphi^n-\bphi^{n-1},\bphi^n) \ge \frac{1}{2k}
   (\|\bphi^n\|^2-\|\bphi^{n-1}\|^2)=\frac{1}{2}\pt\|\bphi^n\|^2, $$
and that the nonlinear term vanishes. Also use $\|\tbU^n\|^2 \le 
\frac{1}{\lambda_1}\|\nabla\tbU^n\|^2$ to obtain
\begin{align}\label{stb002}
 \frac{1}{2}\pt\|\tbU^n\|^2 +& \Big(e^{-\alpha k}\mu-\big(\frac{1-e^{-\alpha 
k}}{k}\big)
 \lambda_1^{-1}\Big)\|\nabla\tbU^n\|^2 \nonumber \\
 +& \gamma e^{-\alpha k}k\sum_{i=1}^n e^{-(\delta-\alpha)(t_n-t_i)} a(\tbU^i,\tbU^n) \le
 e^{-\alpha k}\|\tf^n\|\|\tbU^n\|.
\end{align}
The right-hand side of (\ref{stb002}) can be estimated as
$$ \frac{1}{2}e^{-\alpha k}\mu\|\nabla\tbU^n\|^2+\frac{1}{2\mu\lambda_1}
   e^{-\alpha k}\|\tf^n\|^2, $$
so as to obtain from (\ref{stb002})
\begin{align}\label{stb003}
 \pt\|\tbU^n\|^2 +& \Big(e^{-\alpha k}\mu-2\big(\frac{1-e^{-\alpha k}}{k}\big)
 \lambda_1^{-1}\Big)\|\nabla\tbU^n\|^2 \nonumber \\
 +& 2\gamma e^{-\alpha k}k\sum_{i=1}^n e^{-(\delta-\alpha)(t_n-t_i)} a(\tbU^i,\tbU^n) 
\le \frac{1}{\mu\lambda_1}e^{-\alpha k}\|\tf^n\|^2.
\end{align}
With $0<\alpha<\min\{\delta,\frac{\mu\lambda_1}{2}\},$ we choose $k_0>0$ such that for $0<k<k_0$
$$ 1+\big(\frac{\mu\lambda_1}{2}\big)k\ge e^{\alpha k}. $$
This guarantees that $e^{-\alpha k}\mu-2\big(\frac{1-e^{-\alpha 
k}}{k}\big)\lambda_1^{-1}\ge 0.$ Multiply (\ref{stb003}) by $k$ and then sum over $n=1$ to $N.$ The resulting double sum is non-negative and hence, we obtain
\begin{align} \label{UN-estimate-1}
 \|\tbU^N\|^2+ \Gamma_1 k\sum_{n=1}^N \|\nabla\tbU^n\|^2
 \le \|\bU^0\|^2+\frac{\|\f\|_{\infty}^2}{\mu\lambda_1}e^{-\alpha k} k\sum_{n=1}^N e^{2\alpha t_n}.
\end{align}
Note that using geometric series, we find that
\begin{equation}\label{e-2alpha}
k\sum_{n=1}^N e^{2\alpha t_n}= e^{2\alpha k}\frac{k}{e^{2\alpha k}-1} e^{2\alpha t_N}= e^{2\alpha(k-k^*)} e^{2\alpha t_N},
\end{equation}
for some $k^*$ in $(0,k).$ 
On substituting (\ref{e-2alpha}) in (\ref{UN-estimate-1}),
multiply through out by $e^{-\alpha t_N}$ to complete the rest of the proof.
\end{proof}


In order to obtain uniform (in time) estimate for the discrete solution $\bU^n$
in Dirichlet norm, we introduce the following notation:
\begin{equation}\label{ubeta}
\bU^n_{\beta}=k\sum_{j=1}^n \beta_{nj}\bU^j, n >0;~~\bU^0_{\beta}=0,
\end{equation}
and rewrite (\ref{fdbej}), for $\bphi_h\in\bJ_h,$ as
\begin{equation}\label{fdbej00}
 (\pt\bU^n,\bphi_h)+\mu a(\bU^n,\bphi_h)+b_h(\bU^n,\bU^n,\bphi_h)
 +a(\bU^n_{\beta},\bphi_h)= (\f^n,\bphi_h).
\end{equation}
Note that
\begin{equation}\label{une01}
\bU^n_{\beta}= k\gamma\bU^n+e^{-\delta k}\bU^{n-1}_{\beta},
\end{equation}
and therefore
\begin{align}\label{une02}
\pt\bU^n_{\beta} &=\frac{1}{k}(\bU^n_{\beta}-\bU^{n-1}_{\beta}) =\frac{1}{k}(
k\gamma\bU^n+e^{-\delta k}\bU^{n-1}_{\beta}-\bU^{n-1}_{\beta}) \\
&=\gamma\bU^n-\frac{(1-e^{-\delta k})}{k}\bU^{n-1}_{\beta}. \nonumber
\end{align}

\begin{lemma}\label{unif.est0}
Let $0<\alpha<\min (\delta,\mu\lambda_1/2),~\bU^0=P_h\bu_0$ and $k_0>0$ be such that 
for $0<k<k_0$
$$ 1+\big(\frac{\mu\lambda_1}{2}\big)k\ge e^{\alpha k}. $$
Then, the discrete solution $\bU^n,~n\ge 1$ of (\ref{fdbej}) satisfies the following uniform estimates:
\begin{equation}\label{unif.est01}
\|\bU^n\|^2+\frac{e^{-\delta k}}{\gamma} \|\nabla\bU^n_{\beta}\|^2 \le e^{-2\alpha t_n}\|\bU^0\|^2+ \left(\frac{1-e^{-2\alpha t_n}}{\alpha\mu\lambda_1}\right)\|\f\|^2_{\infty} 
=M_{11}^2,
\end{equation}
and
\begin{equation}\label{unif.est02}
k\sum_{n=m}^{m+l} \big(\mu\|\nabla\bU^n\|^2+\frac{\delta}{\gamma} 
\|\nabla\bU^n_{\beta}\|^2  \big) \le M_{11}^2+\frac{l}{\mu\lambda_1} 
\|\f\|^2_{\infty}=M_{12}^2(l),
\end{equation}
where $\bU^n_{\beta}$ is given by (\ref{ubeta}).
\end{lemma}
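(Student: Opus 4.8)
The plan is to exploit the recursion \eqref{une01} for $\bU^n_\beta$ together with an energy test in \eqref{fdbej00}, mimicking the structure of Lemma \ref{stb} but now keeping track of the $\|\nabla\bU^n_\beta\|^2$ term explicitly rather than discarding the double sum as merely non-negative. First I would set $\bphi_h=\bU^n$ in \eqref{fdbej00}; the trilinear term vanishes and one is left with
\[
\tfrac12\pt\|\bU^n\|^2+\mu\|\nabla\bU^n\|^2+a(\bU^n_\beta,\bU^n)=(\f^n,\bU^n).
\]
The decisive point is to rewrite $a(\bU^n_\beta,\bU^n)$ using \eqref{une02}: since $\gamma\bU^n=\pt\bU^n_\beta+\frac{1-e^{-\delta k}}{k}\bU^{n-1}_\beta$, we get
\[
a(\bU^n_\beta,\bU^n)=\tfrac1\gamma a\bigl(\bU^n_\beta,\pt\bU^n_\beta\bigr)+\tfrac{1-e^{-\delta k}}{\gamma k}\,a\bigl(\bU^n_\beta,\bU^{n-1}_\beta\bigr).
\]
For the first piece I would use the elementary identity $a(\bphi^n,\pt\bphi^n)\ge\frac12\pt\|\nabla\bphi^n\|^2$; for the second I would write $a(\bU^n_\beta,\bU^{n-1}_\beta)=\|\nabla\bU^n_\beta\|^2-k\,a(\bU^n_\beta,\pt\bU^n_\beta)\ge\|\nabla\bU^n_\beta\|^2-\tfrac{k}{2}\pt\|\nabla\bU^n_\beta\|^2$ (again using the same identity in reverse), or more cleanly observe directly from \eqref{une01} that $\|\nabla\bU^n_\beta\|^2=k\gamma\,a(\bU^n,\bU^n_\beta)+e^{-\delta k}a(\bU^{n-1}_\beta,\bU^n_\beta)$ and apply Cauchy–Schwarz with Young's inequality to peel off $e^{-\delta k}\|\nabla\bU^{n-1}_\beta\|^2$. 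Either route produces a telescoping-plus-decay structure: a term $\pt\|\nabla\bU^n_\beta\|^2$ and a strictly positive multiple of $\|\nabla\bU^n_\beta\|^2$ with the coefficient governed by $(1-e^{-\delta k})/k\approx\delta$.

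Next I would fold in the exponential weight $\tbU^n=e^{\alpha t_n}\bU^n$ exactly as in the proof of Lemma \ref{stb}, using $e^{\alpha t_n}\pt\bU^n=e^{\alpha k}\pt\tbU^n-\frac{e^{\alpha k}-1}{k}\tbU^n$ and multiplying through by $e^{-\alpha k}$, and absorb the resulting $-\frac{1-e^{-\alpha k}}{k}\|\tbU^n\|^2$ term into $\mu\|\nabla\tbU^n\|^2$ via $\|\tbU^n\|^2\le\lambda_1^{-1}\|\nabla\tbU^n\|^2$; the stated smallness condition $1+(\mu\lambda_1/2)k\ge e^{\alpha k}$ guarantees the net coefficient of $\|\nabla\tbU^n\|^2$ stays bounded below by $\mu/2$ (say) after we have split off half of it to control $(\f^n,\tbU^n)$ by $\frac{1}{2\mu\lambda_1}e^{-\alpha k}\|\tf^n\|^2$. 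The weighted $\bU^n_\beta$-part must be handled analogously — one needs the weight to act benignly on the recursion, which works because $\alpha<\delta$ makes $e^{-(\delta-\alpha)k}<1$. After multiplying by $k$ and summing $n=1,\dots,N$, the telescoping terms collapse to $\|\tbU^N\|^2+\tfrac{e^{-\delta k}}{\gamma}\|\nabla\tbU^N_\beta\|^2$ minus the initial data (and $\bU^0_\beta=0$), the $\|\nabla\tbU^n\|^2$ and $\|\nabla\tbU^n_\beta\|^2$ sums are non-negative and can be dropped for \eqref{unif.est01}, and the forcing sum $k\sum e^{-\alpha k}\|\tf^n\|^2\le\|\f\|_\infty^2 e^{-\alpha k}k\sum e^{2\alpha t_n}$ is evaluated by the geometric-series identity \eqref{e-2alpha}; multiplying back by $e^{-2\alpha t_N}$ and computing $e^{-2\alpha t_N}\cdot e^{-\alpha k}k\sum_{n=1}^N e^{2\alpha t_n}=\frac{k e^{-\alpha k}}{e^{2\alpha k}-1}(1-e^{-2\alpha t_N})$, which is $\le\frac{1-e^{-2\alpha t_N}}{2\alpha}$ for small $k$, yields $M_{11}^2$ after the constant is tuned to match $1/(\alpha\mu\lambda_1)$.

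For the second estimate \eqref{unif.est02} I would go back to the unweighted inequality (i.e. take $\alpha=0$, or equivalently set $e^{-\alpha k}=1$ throughout), where the energy identity reads
\[
\tfrac12\pt\|\bU^n\|^2+\mu\|\nabla\bU^n\|^2+\tfrac{1}{2\gamma}\pt\|\nabla\bU^n_\beta\|^2+\tfrac{\delta'}{\gamma}\|\nabla\bU^n_\beta\|^2\le(\f^n,\bU^n)\le\tfrac\mu2\|\nabla\bU^n\|^2+\tfrac{1}{2\mu\lambda_1}\|\f^n\|^2,
\]
with $\delta'=(1-e^{-\delta k})/k$ (and $\delta'\to\delta$; for small $k$ one has $\delta'\ge\delta e^{-\delta k}$ or can simply state the bound with $\delta$ replaced by a nearby constant, or keep $\delta$ by a slightly more careful Young step). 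Summing from $n=m$ to $n=m+l$, the telescoped terms give $\|\bU^{m+l}\|^2+\tfrac1\gamma\|\nabla\bU^{m+l}_\beta\|^2-\|\bU^{m-1}\|^2-\tfrac1\gamma\|\nabla\bU^{m-1}_\beta\|^2\ge-\bigl(\|\bU^{m-1}\|^2+\tfrac1\gamma\|\nabla\bU^{m-1}_\beta\|^2\bigr)$, which by \eqref{unif.est01} is bounded below by $-M_{11}^2$; the forcing contributes $k\sum_{n=m}^{m+l}\tfrac{1}{\mu\lambda_1}\|\f^n\|^2\le\tfrac{(l+1)k}{\mu\lambda_1}\|\f\|_\infty^2$ — and here I should double-check the index count, since $l+1$ terms appear but the statement has $l$; this is the kind of off-by-one that the final write-up must settle (perhaps by noting $\delta'/\gamma$ times the $\|\nabla\bU^n_\beta\|^2$ sum can absorb one term, or by a harmless redefinition). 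Rearranging gives precisely $k\sum_{n=m}^{m+l}\bigl(\mu\|\nabla\bU^n\|^2+\tfrac{\delta}{\gamma}\|\nabla\bU^n_\beta\|^2\bigr)\le M_{11}^2+\tfrac{l}{\mu\lambda_1}\|\f\|_\infty^2=M_{12}^2(l)$.

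The main obstacle I anticipate is the treatment of the memory/summation term $a(\bU^n_\beta,\bU^n)$: one must extract from it \emph{both} a clean telescoping quantity $\pt\|\nabla\bU^n_\beta\|^2$ \emph{and} a dissipative term $\|\nabla\bU^n_\beta\|^2$ with the correct sign and a coefficient that does not degenerate as $k\to0$, all while the exponential weight is riding along inside the geometric factor $e^{-\delta k}$ in the recursion \eqref{une01}. Getting the bookkeeping of the constants $e^{-\delta k}$, $(1-e^{-\delta k})/k$, $e^{-\alpha k}$ and $(1-e^{-\alpha k})/k$ to line up — so that after summation nothing but the asserted $M_{11}^2$ and $M_{12}^2(l)$ survives — is the delicate part; the nonlinear term, by contrast, is free, and the parabolic part is routine.
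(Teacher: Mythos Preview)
Your overall plan is the same as the paper's: test \eqref{fdbej00} with $\bU^n$, use the recursion for $\bU^n_\beta$ to convert $a(\bU^n_\beta,\bU^n)$ into a telescoping piece plus a dissipative piece, absorb the forcing with Young and Poincar\'e, weight, sum, and then return to the unweighted inequality summed over $m\le n\le m+l$ for \eqref{unif.est02}. One caution: your \emph{first} option for the cross term is wrong in sign --- from $a(\bU^n_\beta,\pt\bU^n_\beta)=\tfrac12\pt\|\nabla\bU^n_\beta\|^2+\tfrac{k}{2}\|\nabla\pt\bU^n_\beta\|^2$ you get $a(\bU^n_\beta,\bU^{n-1}_\beta)\le\|\nabla\bU^n_\beta\|^2-\tfrac{k}{2}\pt\|\nabla\bU^n_\beta\|^2$, not $\ge$, so that route does not give the lower bound you need. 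Your second option (Cauchy--Schwarz from \eqref{une01}) is fine and in fact recovers exactly the paper's identity.

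The paper's execution is tidier in two places. First, instead of passing through the cross term $a(\bU^n_\beta,\bU^{n-1}_\beta)$ at all, it substitutes $\bU^{n-1}_\beta=\bU^n_\beta-k\,\pt\bU^n_\beta$ directly into \eqref{une02} to obtain $\gamma\bU^n=e^{-\delta k}\pt\bU^n_\beta+\tfrac{1-e^{-\delta k}}{k}\bU^n_\beta$, so that
\[
a(\bU^n_\beta,\bU^n)=\tfrac{e^{-\delta k}}{\gamma}\,a(\bU^n_\beta,\pt\bU^n_\beta)+\tfrac{1-e^{-\delta k}}{k\gamma}\,\|\nabla\bU^n_\beta\|^2
\]
with no cross term and no Cauchy--Schwarz; the mean-value bound $\tfrac{1-e^{-\delta k}}{k}\ge\delta e^{-\delta k}$ then gives \eqref{une03} immediately. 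Second, rather than re-substituting $\tbU^n=e^{\alpha t_n}\bU^n$ into the equation (which forces you to track the weight through the $\bU^n_\beta$ recursion), the paper first derives the unweighted differential inequality $\pt E^n+2\alpha E^n\le\tfrac{1}{\mu\lambda_1}\|\f^n\|^2$ for $E^n=\|\bU^n\|^2+\tfrac{e^{-\delta k}}{\gamma}\|\nabla\bU^n_\beta\|^2$, and only then multiplies by $e^{\alpha_0 t_{n-1}}$ with $1+2\alpha k\ge e^{\alpha_0 k}$, using $\pt(e^{\alpha_0 t_n}E^n)\le e^{\alpha_0 t_{n-1}}(\pt E^n+2\alpha E^n)$. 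This sidesteps all the bookkeeping you flag as ``the delicate part.'' For \eqref{unif.est02} the paper does exactly what you propose: sum the unweighted \eqref{une03} over $m\le n\le m+l$ and invoke \eqref{unif.est01}; your off-by-one concern is a cosmetic issue in the statement, not in the argument.
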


\begin{proof}
Take $\bphi_h=\bU^n$ in (\ref{fdbej00}) and from (\ref{une02}), we find that
$$ a(\bU^n_{\beta},\bU^n)= \frac{e^{-\delta k}}{\gamma} a(\bU^n_{\beta},\pt\bU^n_{\beta})
+\frac{(1-e^{-\delta k})}{k\gamma}\|\nabla\bU^n_{\beta}\|^2. $$
Using mean value theorem, we observe that
$$ \frac{(1-e^{-\delta k})}{k} =\delta e^{-\delta k^*} \ge \delta e^{-\delta k},
~~k^*\in (0,k). $$
Therefore, we obtain from (\ref{fdbej00})
\begin{equation}\label{une03}
\pt\big(\|\bU^n\|^2+\frac{e^{-\delta k}}{\gamma}\|\nabla\bU^n_{\beta}\|^2 \big) 
+\mu\|\nabla\bU^n\|^2+\frac{2\delta e^{-\delta k}}{\gamma}\|\nabla\bU^n_{\beta}\|^2 \le 
\frac{1}{\mu\lambda_1}\|\f^n\|^2.
\end{equation}
As $0<\alpha<\min \{\delta,\mu\lambda_1/2\}$, we now find that
\begin{equation}\label{une04}
\pt\big(\|\bU^n\|^2+\frac{e^{-\delta k}}{\gamma}\|\nabla\bU^n_{\beta}\|^2 \big) 
+2\alpha\big(\|\bU^n\|^2+\frac{e^{-\delta k}}{\gamma} 
\|\nabla\bU^n_{\beta}\|^2\big) \le \frac{1}{\mu\lambda_1}\|\f^n\|^2.
\end{equation}
Multiply the inequality (\ref{une04}) by $e^{\alpha_0 t_{n-1}}$ for some $\alpha_0>0$ and 
note that
\begin{eqnarray}\label{discrete01}
\pt(e^{\alpha_0 t_n}\bphi^n) &=& e^{\alpha_0 t_{n-1}}\Big\{\pt\bphi^n +\frac{e^{ 
\alpha_0 k}-1}{k} \bphi^n\Big\} \nonumber \\
&\le & e^{\alpha_0 t_{n-1}}\Big\{\pt\bphi^n +2\alpha\bphi^n\Big\}.
\end{eqnarray}
With the assumption on the time step $k,$ that is, $0<k<k_0,$ and for given $\alpha$, we can always choose $\alpha_0$ such that
\begin{equation}\label{une04a}
1+2\alpha k \ge e^{\alpha_0k}.
\end{equation}
Observe that $\alpha _0 < 2\alpha$. Therefore, we obtain from (\ref{une04})
$$ \pt\Big(e^{\alpha_0 t_n}\Big(\|\bU^n\|^2+\frac{e^{-\delta k}}{\gamma} \|\nabla 
\bU^n_{\beta}\|^2 \Big) \Big) \le\frac{e^{\alpha_0 t_{n-1}}}{\mu\lambda_1}\|\f\|_{\infty}^2. 
$$
Multiply by $k$ and sum over $1$ to $n$ and then multiply the resulting inequality by $e^{-\alpha_0 t_n}.$ Observe that $\bU^0_{\beta}=0$ by definition. This results in the first estimate (\ref{unif.est01}). For the second estimate (\ref{unif.est02}), we multiply (\ref{une03}) by $k,$ sum over $m$ to $m+l$ with $m,l\in \mathcal{N}$ and use (\ref{unif.est01}) to complete the rest of the proof.
\end{proof}

\begin{lemma}\label{unif.est1}
Under the assumptions of Lemma \ref{unif.est0}, the discrete solution $\bU^n,~n\ge 1$ of (\ref{fdbej}) satisfies the following uniform estimates:
\begin{equation}\label{unif.est1a}
\|\nabla\bU^n\|^2+\frac{e^{-\delta k}}{\gamma}\|\td_h\bU^n_{\beta}\|^2 \le K.
\end{equation}
\end{lemma}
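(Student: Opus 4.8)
The plan is to differentiate the discrete scheme in the ``discrete time'' sense and test with an appropriate power of the discrete Laplacian, mimicking the energy argument of Lemma \ref{unif.est0} but one derivative higher. First I would apply the backward difference operator $\pt$ to (\ref{fdbej00}): subtracting the equation at level $n-1$ from that at level $n$ and dividing by $k$ gives, for $\bphi_h\in\bJ_h$,
\[
 (\pt(\pt\bU^n),\bphi_h)+\mu a(\pt\bU^n,\bphi_h)+a(\pt\bU^n_{\beta},\bphi_h)
 = (\pt\f^n,\bphi_h)-\pt\big(b(\bU^n,\bU^n,\bphi_h)\big),
\]
and then, exactly as in (\ref{une02}), rewrite $\pt\bU^n_{\beta}=\gamma\bU^n-\frac{1-e^{-\delta k}}{k}\bU^{n-1}_{\beta}$ so that the memory term is again controlled by a non-negative contribution plus lower-order terms. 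For the nonlinear term I would use the discrete Leibniz rule $\pt(b(\bU^n,\bU^n,\cdot))=b(\pt\bU^n,\bU^n,\cdot)+b(\bU^{n-1},\pt\bU^n,\cdot)$.

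The main step is the choice of test function. To reach the $\|\td_h\bU^n_{\beta}\|$ and $\|\nabla\bU^n\|$ norms I expect one needs to test (\ref{fdbej00}) itself with $\bphi_h=-\td_h\bU^n$ (equivalently use $a(\bU^n_\beta,-\td_h\bU^n)$ and relate it through (\ref{une02}) and the discrete operator $\td_h$ to $\pt\|\td_h\bU^n_\beta\|^2$), producing
\[
 \pt\Big(\|\nabla\bU^n\|^2+\tfrac{e^{-\delta k}}{\gamma}\|\td_h\bU^n_{\beta}\|^2\Big)
 +\mu\|\td_h\bU^n\|^2+\tfrac{2\delta e^{-\delta k}}{\gamma}\|\td_h\bU^n_{\beta}\|^2
 \le C\|\f^n\|^2+C\,|b(\bU^n,\bU^n,\td_h\bU^n)|.
\]
The nonlinear term is then bounded by Lemma \ref{nonlin} — using, say, the second bound $|(\bv\cdot\nabla\bw,\bphi)|\le K\|\bv\|^{1/2}\|\Delta_h\bv\|^{1/2}\|\nabla\bw\|\|\bphi\|$ with $\bphi=\td_h\bU^n$ — giving a factor $\|\bU^n\|^{1/2}\|\td_h\bU^n\|^{3/2}\|\nabla\bU^n\|$, which by Young's inequality is absorbed as $\tfrac{\mu}{2}\|\td_h\bU^n\|^2+C\|\bU^n\|^2\|\nabla\bU^n\|^6$. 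Here the already-established uniform bounds (\ref{unif.est01})–(\ref{unif.est02}) on $\|\bU^n\|$, $\|\nabla\bU^n_{\beta}\|$ and the summability of $\|\nabla\bU^n\|^2$ are essential to keep the accumulated constant independent of $t_n$.

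After multiplying by the exponential weight $e^{\alpha_0 t_{n-1}}$ as in (\ref{discrete01})–(\ref{une04a}), summing in $n$, and multiplying back by $e^{-\alpha_0 t_n}$, the dissipative term $\mu\,e^{-\alpha_0 t_n}k\sum e^{\alpha_0 t_j}\|\td_h\bU^j\|^2$ on the left controls the accumulated nonlinear contribution via a discrete Gronwall argument, yielding (\ref{unif.est1a}) with $K$ independent of time. The hard part, as flagged in the introduction, is precisely handling the discrete memory term without a uniform discrete Gronwall lemma: one must exploit the sign of $a(\bU^n_\beta,\pt\bU^n_\beta)$ after recombination (as in the proof of Lemma \ref{unif.est0}) rather than estimating the convolution sum crudely, and simultaneously ensure the nonlinear term's constant does not grow with $n$ — which forces the bootstrap off the lower-order uniform estimates (\ref{unif.est01})–(\ref{unif.est02}) rather than off the exponentially-in-time bounds. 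A parallel test with $\bphi_h=\pt\bU^n$ in the $\pt$-differentiated equation supplies the auxiliary control on $\|\pt\bU^n\|$ and $\|\nabla\bU^n_\beta\|$-type quantities needed to close the estimate, in direct analogy with the semidiscrete Lemma \ref{dth2}.
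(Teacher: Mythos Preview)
Your choice of test function $\bphi_h=-\td_h\bU^n$ and the resulting inequality
\[
 \pt\Big(\|\nabla\bU^n\|^2+\tfrac{e^{-\delta k}}{\gamma}\|\td_h\bU^n_{\beta}\|^2\Big)
 +\mu\|\td_h\bU^n\|^2+\tfrac{2\delta e^{-\delta k}}{\gamma}\|\td_h\bU^n_{\beta}\|^2
 \le C\|\f^n\|^2 + C\,\|\bU^n\|^2\|\nabla\bU^n\|^4
\]
match the paper up to this point (Young with exponents $(4,4/3)$ gives $\|\nabla\bU^n\|^4$, not $\|\nabla\bU^n\|^6$). The opening paragraph on $\pt$-differentiating the scheme and the closing auxiliary test with $\pt\bU^n$ are not needed; the paper works with (\ref{fdbej00}) directly.

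The genuine gap is what you do next. Writing $E^n:=\|\nabla\bU^n\|^2+\tfrac{e^{-\delta k}}{\gamma}\|\td_h\bU^n_{\beta}\|^2$, the inequality becomes $\pt E^n \le K_{11}+CM_{11}^2\|\nabla\bU^n\|^2\,E^n$ after using $\|\nabla\bU^n\|^2\le E^n$. A discrete Gronwall step then produces a factor $\exp\big(Ck\sum_{i\le n}\|\nabla\bU^i\|^2\big)$; by (\ref{unif.est02}) that sum grows linearly in $t_n$, so the bound is exponential in time, not uniform. Multiplying first by $e^{\alpha_0 t_{n-1}}$ does not help: the nonlinear term $CM_{11}^2\|\nabla\bU^n\|^4$ is not dominated by the available dissipation $\mu\|\td_h\bU^n\|^2+2\delta\tfrac{e^{-\delta k}}{\gamma}\|\td_h\bU^n_\beta\|^2$ with a fixed margin, so you cannot manufacture the needed $2\alpha E^n$ term as in (\ref{une04}). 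You flag the difficulty but ``bootstrap off the lower-order uniform estimates'' does not supply a mechanism.

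The device the paper uses, and which your proposal is missing, is to set
\[
g^n:=\min\Big\{\alpha_0+\mu\lambda_1-\big(\tfrac{9}{2\mu}\big)^3 M_{11}^2\|\nabla\bU^n\|^2,\ 2\delta\Big\},
\]
and, after inserting $\alpha_0\|\nabla\bU^n\|^2\le\tfrac{\mu}{3}\|\td_h\bU^n\|^2+\tfrac{3}{4\mu}\alpha_0^2\|\bU^n\|^2$, recast the inequality as $\pt E^n+g^nE^n\le K_{11}$. One then partitions the index range $\{m,\dots,m+l\}$ according to which branch of the minimum is active and uses the window bound (\ref{unif.est02}) to show $k\sum_{n=m}^{m+l}g^n\ge 2\delta\,t_{l+1}$; choosing $\alpha_0$ so that the two branches balance forces effectively $\pt E^n+2\delta E^n\le K_{11}$, and exponential weighting then closes the uniform estimate. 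This partition argument is precisely the ``new way'' advertised in the introduction as a replacement for the discrete uniform Gronwall lemma, and it is the missing idea in your sketch.
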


\begin{proof}
Set $\bphi_h=-\td_h\bU^n$ in (\ref{fdbej00}) and as in the Lemma \ref{unif.est0}, we now 
obtain
\begin{align}\label{une11}
\pt\big(\|\nabla\bU^n\|^2+\frac{e^{-\delta k}}{\gamma}\|\td_h\bU^n_{\beta}\|^2 
\big) +\mu\|\td_h\bU^n\|^2&+\frac{2\delta}{\gamma}\|\nabla\bU^n_{\beta}\|^2 \le \|\f^{n}\|\|\td_h\bU^n\| \nonumber \\
&+|b_h(\bU^n,\bU^n,-\td_h\bU^n)|.
\end{align}
Use Lemma \ref{nonlin} to arrive at
\begin{align}\label{une12}
 \pt\left(\|\nabla\bU^n\|^2+\frac{e^{-\delta k}}{\gamma}\|\td_h\bU^n_{\beta}\|^2 
\right)+& \frac{4\mu} {3}\|\td_h\bU^n\|^2+\frac{2\delta}{\gamma} 
\|\td_h\bU^n_{\beta}\|^2 \nonumber \\
&\le \frac{3}{\mu} \|\f\|_{\infty}^2 +(\frac{9/2}{\mu})^3 M_{11}^2\|\nabla\bU^n\|^4. 
\end{align}
For some $\alpha_0>0$, we find that
\begin{equation}\label{une13} 
 \alpha_0\|\nabla\bU^n\|^2 \le \frac{\mu}{3}\|\td_h\bU^n\|^2 
+\frac{3}{4\mu}\alpha_0^2\|\bU^n\|^2.
\end{equation}
Define
\begin{equation}\label{une14}
 g^n= \min\Big\{\alpha_0+\mu\lambda_1-(\frac{9}{2\mu})^3 M_{11}^2
 \|\nabla\bU^n\|^2, ~2\delta\Big\}.
\end{equation}
With $E^n :=\|\nabla\bU^n\|^2+\frac{e^{-\delta k}}{\gamma} \|\td_h\bU^n_{\beta}\|^2$, we rewrite (\ref{une12}) as
\begin{equation}\label{une15}
 \pt E^n+g^nE^n \le \frac{3}{\mu}\|\f\|_{\infty}^2+\frac{3}{4\mu}\alpha_0^2\|\bU^n\|^2 
=K_{11}. 
\end{equation}
Let $\{n_{i}\}_{i\in\mathbb{N}}$ and $\{\bar{n}_{i}\}_{i\in\mathbb{N}}$ be two subsequences of natural numbers such that
$$ g^{n_{i}}=\alpha_0+\mu\lambda_1-(\frac{9}{2\mu})^3 M_{11}^2\|\nabla\bU^{n_i}\|^2,
~~g^{\bar{n}_{i}}=2\delta,~\forall i. $$
If for some $n$,
$$ g^n=\alpha_0+\mu\lambda_1-(\frac{9}{2\mu})^3 M_{11}^2\|\nabla\bU^n\|^2=2\delta $$
then without loss of generality, we assume that $n\in \{\bar{n}_{i}\} $ so as to make
the two subsequence $\{n_{i}\}$ and $\{\bar{n}_{i}\}$ disjoint. Now for $m,l\in\mathbb{N}$, we write
\begin{align}\label{une16}
k\sum_{n=m}^{m+l}g^n &= k\sum_{n=m_1}^{m_{l_1}} g^n+k\sum_{n=\bar{m}_1}^{\bar{m}_{l_2}}
g^n \nonumber \\
&=k\sum_{n=m_1}^{m_{l_1}} \left(\alpha_0+\mu \lambda_1-(\frac{9}{2\mu})^3 M_{11}^2  \|\nabla\bU^n\|^2\right)+k\sum_{n=\bar{m}_1}^{\bar{m}_{l_2}} 2\delta.
\end{align}
Here, $m_1,m_2,\cdots,m_{l_1} \in\{n_{i}\}\cap \{m,m+1,\cdots,
m+l\}$ and $\bar{m}_1,\bar{m}_2,\cdots,\bar{m}_{l_2} \in\{\bar{n}_{i}\}\cap \{m,m+1,
\cdots,m+l\}$ such that $l_1+l_2=l+1.$ Note that $l_1$ or $l_2$ could be $0$.
Using Lemma \ref{unif.est0}, we observe that
\begin{align*}
(\frac{9}{2\mu})^3k\sum_{n=m}^{m+l} M_{11}^2 \|\nabla\bU^n\|^2 \le 
\frac{9^3M_{11}^2}{2^3\mu^3}k\sum_{n=m}^{m+l} \|\nabla\bU^n\|^2 \le 
\frac{9^3M_{11}^2}{2^3\mu^4}M_{12}^2(l)=K_{12}(l).
\end{align*}
Therefore, from (\ref{une16}), we find that
\begin{align*}
k\sum_{n=m}^{m+l}g^n \ge (kl_1)(\alpha_0+\mu \lambda_1)-K_{12}(l_1)+2\delta (kl_2).
\end{align*}
We choose $\alpha_0$ such that $(kl_1)(\alpha_0+\mu \lambda_1)-K_{12}(l_1)=2\delta (kl_1)$
to arrive at
 \begin{equation}\label{une17}
k\sum_{n=m}^{m+l}g^n \ge 2\delta t_{l+1}.
\end{equation}
By definition of $g^n$, we have equality in (\ref{une17}) and in fact, $g^n=2\delta$.
Now from (\ref{une15}), we obtain
$$ \pt E^n+2\delta E^n \le K_{11}. $$
As in (\ref{discrete01}), we can choose $0<\alpha_{01} < \alpha \le \delta$ such that
$$ \pt(e^{\alpha_{01}t_n}E^n) \le e^{\alpha_{01}t_{n-1}}(\pt E^n+2\delta E^n)
\le K_{11}e^{\alpha_{01}t_{n-1}}. $$
Multiply by $k$ and sum over $1$ to $n$. Observe that $E^0=\|\nabla\bU^0\|^2$. Finally,
multiply the resulting inequality by $e^{-\alpha_{01}t_n}$ to find that
$$ E^n \le e^{-\alpha_{01}t_n}\|\nabla\bU^0\|^2+K. $$
This completes the rest of the proof.
\end{proof}

\begin{remark}
As a consequence of the Lemma 4.3, the following {\it a priori} bound is valid:
\begin{equation}
\label{h2-bound}
\tau*(t_n)  \|\td_h\bU^n\|^2 \leq K. 
\end{equation}
\end{remark}

\section{A Priori Error Estimate}
\se

In this section, we discuss error estimate of the backward Euler method for the Oldroyd 
model (\ref{om})-(\ref{ibc}). For the error analysis, we set, for fixed $n\in\mathbb{N}, ~1< 
n\le N,~\e_n=\bU^n -\bu_h(t_n)=\bU^n-\bu_h^n.$
We now rewrite (\ref{dwfj}) at $t=t_n$ and subtract the resulting one from (\ref{fdbej}) to 
obtain
\begin{align}\label{eebe}
 (\pt\e_n,\bphi_h)+ \mu a(\e_n,\bphi_h)+a(q^n_r(\e),\bphi_h) = E^n (\bu_h)(\bphi_h) 
+\ve_a^n(\bu_h)(\bphi_h)+\Lambda^n_h(\bphi_h),
\end{align}
where,
\begin{eqnarray}
 E^n(\bu_h)(\bphi_h) &=&  (\bu_{ht}^n,\bphi_h)-(\pt\bu_h^n,\bphi_h) = (\bu_{ht}^n,\bphi_h) 
-\frac{1}{k}\int_{t_{n-1}}^{t_n}(\bu_{hs}, \bphi_h)~ds \nonumber \\
 &=& \frac{1}{2k}\int_{t_{n-1}}^{t_n} (t-t_{n-1})(\bu_{htt},\bphi_h)dt, \label{R1be} \\
 \ve_a^n(\bu_h)(\bphi_h) &=&  a(\uhb(t_n), \bphi_h)ds -a(q_r^n(\bu_h), \bphi_h)=a(\ve_r^n(\bu_h),\bphi_h), \label{ver}
\end{eqnarray}
and
\begin{align}\label{dLbe}
 \Lambda^n_h(\bphi_h) &= b(\bu_h^n,\bu_h^n,\bphi_h)-b(\bU^n,\bU^n,\bphi_h)
\nonumber \\
& = -b(\bu_h^n,\e_n,\bphi_h)-b(\e_n,\bu_h^n,\bphi_h)-b(\e_n,\e_n,\bphi_h).
\end{align}
In order to dissociate the effect of nonlinearity, we first linearized the discrete problem (\ref{fdbej}), and introduce $\{\bV^n\}_{n\ge 1}\in\bJ_h$ as solutions of the following linearized problem:
\begin{equation}\label{fdbejv} 
 (\pt\bV^n,\bphi_h)+\mu a(\bV^n,\bphi_h)+a(q_r^n(\bV),\bphi_h)= (\f^n,\bphi_h)
 -b(\bu_h^n,\bu_h^n,\bphi_h)~~~\forall\bphi_h\in\bJ_h,
\end{equation}
given $\{\bU^n\}_{n\ge 1}\in\bJ_h$ as solution of (\ref{fdbej}). It is easy to check the existence and uniqueness of 
$\{\bV^n\}_{n\ge 1}\in\bJ_h.$

We now split the error as:
\begin{align}\label{errsplit}
\e_n:=\bU^n-\bu_h^n = (\bU^n-\bV^n)-(\bu_h^n-\bV^n) =: \bta_n-\bxi_n.
\end{align}
The following equations are satisfied by $\bxi_n$ and $\bta_n,$ respectively:
\begin{align}\label{eebelin}
 (\pt\bxi_n,\bphi_h)+& \mu a(\bxi_n,\bphi_h)+a(q^n_r(\bxi),\bphi_h)
 = -E^n(\bu_h)(\bphi_h)-\ve_a^n(\bu_h)(\bphi_h)
\end{align}
and
\begin{align}\label{eebenl}
 (\pt\bta_n,\bphi_h)+& \mu a(\bta_n,\bphi_h)+a(q^n_r(\bta),\bphi_h) 
 =\Lambda^n_h(\bphi_h).
\end{align}
Below, we prove the following Lemma for our subsequent use.
\begin{lemma}\label{e-ve}
Let $ r,s\in \{0,1\},\tau_i=\min\{1,t_i\}$ and $\alpha$ as defined in  Lemma 4.1. Then,
with $E^n$ and $\ve_a^n$ defined, respectively, as (\ref{R1be}) and (\ref{ver}), 
the following estimate holds 
for $n=1,\cdots,N$ and for $\{\bphi_h^i\}_i$ in $\bJ_h$:
\begin{eqnarray}\label{eve1}
&& 2k\sum_{i=1}^n \tau_i^s e^{2\alpha (t_i-t_n)}\Big(E^i(\bu_h)(\bphi_h^i)+\ve_a^i(\bu_h)
(\bphi_h^i)\Big) \\
&&\le Kk^{(1+s-r)/2}(1+\log\frac{1}{k})^{(1-r)/2}\left({k\sum_{i=1}^n \tau_i^s e^{2\alpha (t_i-t_n)}\|\bphi_h^i\|_{1-r}^2}\right)^{1/2}. \nonumber
\end{eqnarray}
\end{lemma}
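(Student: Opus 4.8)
\medskip
\noindent\textbf{Outline of the proof.}
We split the summand and estimate
$S_E:=2k\sum_{i=1}^n\tau_i^s e^{2\alpha(t_i-t_n)}E^i(\bu_h)(\bphi_h^i)$ and
$S_a:=2k\sum_{i=1}^n\tau_i^s e^{2\alpha(t_i-t_n)}\ve_a^i(\bu_h)(\bphi_h^i)$ separately.
In $S_E$ we use $(\bu_{htt},\bphi_h^i)\le\|\bu_{htt}\|_{r-1}\|\bphi_h^i\|_{1-r}$, and in $S_a$, writing $\ve_a^i(\bu_h)(\bphi_h^i)=a(\ve_r^i(\bu_h),\bphi_h^i)$ as in (\ref{ver}), we use $a(\psi,\bphi_h^i)\le\|\psi\|_1\|\bphi_h^i\|_1$ when $r=0$ and $a(\psi,\bphi_h^i)=-(\td_h\psi,\bphi_h^i)\le\|\td_h\psi\|\,\|\bphi_h^i\|$ when $r=1$. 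A discrete Cauchy--Schwarz inequality in the index $i$ then pulls out the factor $k^{-1/2}\big(k\sum_{i=1}^n\tau_i^s e^{2\alpha(t_i-t_n)}\|\bphi_h^i\|_{1-r}^2\big)^{1/2}$, so the claim reduces to the two data estimates
$$\Sigma_E:=\sum_{i=1}^n\tau_i^s e^{2\alpha(t_i-t_n)}\Big(\int_{t_{i-1}}^{t_i}(t-t_{i-1})\,\|\bu_{htt}(t)\|_{r-1}\,dt\Big)^2\le K\,k^{\,2+s-r}\big(1+\log\tfrac1k\big)^{1-r}$$
and $\Sigma_a:=\sum_{i=1}^n\tau_i^s e^{2\alpha(t_i-t_n)}\,\|\ve_r^i(\bu_h)\|_{1+r}^2\le K\,k^{\,s-r}\big(1+\log\tfrac1k\big)^{1-r}$, with $\|\cdot\|_2$ read as $\|\td_h\cdot\|$ and $K$ independent of $n$ and of the final time. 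We use throughout that $k\le t_1\le\tau_{i-1}$ and $\tau_i\le2\tau_{i-1}$ for $i\ge2$, that $\tau^*(t)$ and $e^{2\alpha t}(\tau^*)^m(t)$ are nondecreasing, that $s-r-1\le0$, and that $k\sum_{i=1}^n e^{2\alpha(t_i-t_n)}\le C_\alpha$.

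For $\Sigma_E$ we treat $i\ge2$ and $i=1$ separately. For $i\ge2$, a weighted Cauchy--Schwarz inside the inner integral against the weight $e^{2\alpha t}(\tau^*)^{r+1}(t)$ --- precisely the weight for which Lemma \ref{dth2}, (\ref{dth13}), gives $\int_0^{t_n}e^{2\alpha t}(\tau^*)^{r+1}(t)\|\bu_{htt}(t)\|_{r-1}^2\,dt\le Ke^{2\alpha t_n}$ --- together with $(t-t_{i-1})\le k$ and monotonicity of the weight yields $\big(\int_{t_{i-1}}^{t_i}(t-t_{i-1})\|\bu_{htt}\|_{r-1}\,dt\big)^2\le k^3\big(e^{2\alpha t_{i-1}}\tau_{i-1}^{r+1}\big)^{-1}\int_{t_{i-1}}^{t_i}e^{2\alpha t}(\tau^*)^{r+1}(t)\|\bu_{htt}\|_{r-1}^2\,dt$. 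Multiplying by $\tau_i^s e^{2\alpha(t_i-t_n)}$ and using $\tau_i^s/\tau_{i-1}^{r+1}\le2^s\tau_{i-1}^{\,s-r-1}\le2^s k^{\,s-r-1}$ (as $s-r-1\le0$ and $\tau_{i-1}\ge k$) collapses the prefactor to $C\,k^{\,2+s-r}e^{-2\alpha t_n}$; summing the integrals over $i\ge2$ and invoking (\ref{dth13}) gives $\le C K k^{\,2+s-r}$. For $i=1$ one has $\tau_1=k$, $\tau^*(t)=t$ on $(0,k]$, $\int_0^k t^2\big(e^{2\alpha t}t^{r+1}\big)^{-1}dt\le k^{\,2-r}/(2-r)$, so the $i=1$ term is $\le k^s e^{2\alpha(t_1-t_n)}\cdot\frac{k^{2-r}}{2-r}\cdot Ke^{2\alpha k}\le CK k^{\,2+s-r}$. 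This gives the bound on $\Sigma_E$ (the logarithm being spare here, and needed only through $\Sigma_a$ when $r=0$).

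For $\Sigma_a$ the key is the one--step recursion $\ve_r^i(\bu_h)=e^{-\delta k}\ve_r^{i-1}(\bu_h)+\bzeta_i$ with $\bzeta_i:=\int_{t_{i-1}}^{t_i}\big(\beta(t_i-s)\bu_h(s)-\gamma\,\bu_h(t_i)\big)\,ds$ (using $\beta(0)=\gamma$), whence $\ve_r^i(\bu_h)=\sum_{j=1}^i e^{-\delta(t_i-t_j)}\bzeta_j$. The first step, $\bzeta_1$, is bounded directly by its two terms: $\|\bzeta_1\|_1\le2\gamma K k$ (from $\|\bu_h(\cdot)\|_1\le K$) when $r=0$, and $\|\td_h\bzeta_1\|\le C k^{1/2}$ (from $\|\bu_h(\cdot)\|_2\le K(\tau^*)^{-1/2}$ and $\int_0^k s^{-1/2}ds\le2k^{1/2}$) when $r=1$. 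For $j\ge2$ we use the integration by parts $\bzeta_j=-\int_{t_{j-1}}^{t_j}\!\int_s^{t_j}\partial_\tau\big(\beta(t_j-\tau)\bu_h(\tau)\big)\,d\tau\,ds$ of (\ref{errrr}), giving $\|\bzeta_j\|_{1+r}\le k\gamma\int_{t_{j-1}}^{t_j}\big(\delta\|\bu_h(\tau)\|_{1+r}+\|\bu_{h\tau}(\tau)\|_{1+r}\big)d\tau$; a weighted Cauchy--Schwarz in $\tau$ against Lemma \ref{dth2}, (\ref{dth12}) with index $r+1$, namely $\int_0^{t_n}e^{2\alpha\tau}(\tau^*)^{r+1}(\tau)\|\bu_{h\tau}(\tau)\|_{1+r}^2\,d\tau\le Ke^{2\alpha t_n}$, then yields $\|\bzeta_j\|_{1+r}\le C k^2\tau_{j-1}^{-r/2}+C k^{3/2}\tau_{j-1}^{-(r+1)/2}e^{-\alpha t_{j-1}}c_j$ with $\sum_{j\ge2}c_j^2\le Ke^{2\alpha t_n}$. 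Inserting the $e^{-\delta(t_i-t_j)}$ weights and using a Cauchy--Schwarz in $j$, the decisive quantity is the small--$j$ tail $\sum_{j\ge2}\tau_{j-1}^{-(r+1)}e^{-\delta(t_i-t_j)}e^{-2\alpha t_{j-1}}$, whose singular part behaves like $k^{-(r+1)}\sum_l l^{-(r+1)}$: this converges for $r=1$ but is harmonic, of size $\sim k^{-2}\log\tfrac1k$, for $r=0$. Collecting terms (and using $\alpha<\delta$ together with the $e^{-\delta(\cdot)}$--decay of $\beta$ for uniformity in time) one obtains $\|\ve_r^i(\bu_h)\|_{1+r}\le C k^{\,1-r/2}\big(1+\log\tfrac1k\big)^{(1-r)/2}$ uniformly in $i$, hence $\Sigma_a\le C^2 k^{\,2-r}\big(1+\log\tfrac1k\big)^{1-r}\sum_i\tau_i^s e^{2\alpha(t_i-t_n)}\le C^2 C_\alpha k^{\,1-r}\big(1+\log\tfrac1k\big)^{1-r}\le K k^{\,s-r}\big(1+\log\tfrac1k\big)^{1-r}$ (the last step since $s\le1$).

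Combining the two estimates with the Cauchy--Schwarz reduction of the first paragraph proves (\ref{eve1}). The main obstacle is the quadrature contribution $\Sigma_a$: bounding each one--step error $\bzeta_j$ by its two terms separately is too crude, since the resulting sum over the $\sim t_n/k$ steps would not be uniform in time, so one is forced to use the first--order (integration by parts) form of the error; but that form requires the weighted bounds of Lemma \ref{dth2} on $\|\bu_{ht}\|_{1+r}$, which degenerate on the first sub--interval --- hence $\bzeta_1$ must be peeled off and estimated by hand --- and it is precisely the divergence of the harmonic tail $\sum_j\tau_{j-1}^{-1}$, as opposed to the convergence of $\sum_j\tau_{j-1}^{-2}$, that produces, and only for $r=0$, the factor $(1+\log\tfrac1k)^{1/2}$. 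A subsidiary point throughout is to keep the exponential weights $e^{2\alpha(\cdot)}$ matched against the decay of $\beta$ so that every constant is independent of the final time.
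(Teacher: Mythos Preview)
Your approach is broadly correct and the two target inequalities $\Sigma_E\le Kk^{2+s-r}$ and $\Sigma_a\le Kk^{s-r}(1+\log\tfrac1k)^{1-r}$ are the right reductions. Your treatment of $\Sigma_E$ is essentially the paper's, only with the Cauchy--Schwarz organized locally on each $[t_{i-1},t_i]$ and then summed against the single global bound $\sum_iB_i\le Ke^{2\alpha t_n}$ from (\ref{dth13}); this is actually cleaner than the paper's presentation and, as you note, avoids a logarithm in $\Sigma_E$ altogether.

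For $\Sigma_a$ you take a genuinely different route. The paper expands $\ve_r^i(\bu_h)$ via the rectangle--rule error bound (\ref{errrr}), observes that the resulting integrand $\delta\|\bu_h\|_{r+1}+\|\bu_{ht}\|_{r+1}$ obeys the same weighted $L^2$--in--time estimate as $\|\bu_{htt}\|_{r-1}$ (compare (\ref{dth12}) with index $r+1$ to (\ref{dth13})), and then argues by direct analogy to the $E^i$ case, the kernel $\beta(t_i-t)$ keeping the inner integral over $[0,t_i]$ uniformly bounded. Your recursion $\ve_r^i=\sum_{j\le i}e^{-\delta(t_i-t_j)}\bzeta_j$ is more explicit and yields the stronger pointwise statement $\|\ve_r^i\|_{1+r}\le Ck^{1-r/2}(1+\log\tfrac1k)^{(1-r)/2}$, at the price of having to peel off $\bzeta_1$ by hand. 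Both approaches correctly localize the logarithm to $r=0$.

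There is one gap to repair. With your Cauchy--Schwarz splitting as written, the complementary factor is $\big(\sum_j e^{-\delta(t_i-t_j)}c_j^2\big)^{1/2}\le K^{1/2}e^{\alpha t_i}$, which grows with $i$ and destroys the claimed uniformity of the pointwise bound on $\|\ve_r^i\|_{1+r}$. The fix is to first rewrite $e^{-\delta(t_i-t_j)}e^{-\alpha t_{j-1}}=e^{\alpha k}\,e^{-\alpha t_i}\,e^{-(\delta-\alpha)(t_i-t_j)}$ (this is precisely where $\alpha<\delta$ enters) and pull the $e^{-\alpha t_i}$ outside the $j$--sum before applying Cauchy--Schwarz; the decisive quantity then becomes $\sum_{j\ge2}e^{-(\delta-\alpha)(t_i-t_j)}\tau_{j-1}^{-(r+1)}$, whose singular part is $k^{-(r+1)}\sum_l l^{-(r+1)}$ --- so of size $\sim k^{-1}\log\tfrac1k$ for $r=0$ (not $k^{-2}\log\tfrac1k$ as you wrote) and $\sim k^{-2}$ for $r=1$ --- and the extracted $e^{-\alpha t_i}$ exactly cancels the $e^{\alpha t_i}$ coming from $(\sum_j c_j^2)^{1/2}\le K^{1/2}e^{\alpha t_i}$. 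With this correction your final bounds stand.
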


\begin{proof}
From (\ref{R1be}), we observe that
\begin{align*}
2k &\sum_{i=1}^n \tau_i^s e^{2\alpha (t_i-t_n)} E^i(\bu_h)(\bphi_h^i) \\ \nonumber 
\le &  \left[k^{-1}\sum_{i=1}^n \Big(\int_{t_{i-1}}^{t_i} \tau_i^{s/2} e^{\alpha (t_i-t_n)}(t-t_{i-1})\|\bu_{htt}\|_{r-1}\,dt \Big)^2\right]^{1/2}
\left[k\sum_{i=1}^n \tau_i^s e^{2\alpha (t_i-t_n)}\|\bphi_h^i\|_{1-r}^2
\right]^{1/2}. \nonumber
\end{align*}
Using (\ref{dth13}), we find 
\begin{align}\label{eve01a}
& \left[{k^{-1}\sum_{i=1}^n \Big(\int_{t_{i-1}}^{t_i} \tau_i^{s/2} e^{\alpha (t_i-t_n)}(t-t_{i-1})\|\bu_{htt}\|_{r-1}dt \Big)^2}\right]^{1/2} \nonumber \\
\le & \left[{k^{-1}\sum_{i=1}^n \int_{t_{i-1}}^{t_i} \tau_i^s \tau^{-(r+1)}(t-t_{i-1})^2
e^{2\alpha (t_i-t)}\,dt}\right]^{1/2}\left[{e^{-2\alpha t_n}\int_0^{t_n} \tau^{(r+1)} e^{2\alpha t}\|\bu_{htt}\|^2_{r-1}\,dt}\right]^{1/2} \\
\le & Ke^{\alpha k}\left[{k^{-1}\sum_{i=1}^n \int_{t_{i-1}}^{t_i} \tau_i^s \tau(t)^{-(r+1)}(t-t_{i-1})^2}\,dt\right]^{1/2}. \nonumber
\end{align}
It is now easy to calculate the remaining part for various values of $r,s.$ For the sake of completeness, we present below the case when $r=s=0.$
\begin{align*}
\sum_{i=1}^n \int_{t_{i-1}}^{t_i} t^{-1}(t-t_{i-1})^2 dt
& \le \int_0^k t\,dt+k^2\sum_{i=2}^n \int_{t_{i-1}}^{t_i} t^{-1}\,dt \\
& \le Kk^2(1+\log\frac{1}{k}).
\end{align*}
This completes the proof of the first half. For the remaining part, we observe from (\ref{ver}) and
(\ref{errrr}) that
\begin{align}\label{eve02}
& 2k\sum_{i=1}^n \tau_i^s e^{2\alpha (t_i-t_n)} \ve_a^i(\bu_h)(\bphi_h^i) \le \left[{k\sum_{i=1}^n \tau_i^s e^{2\alpha (t_i-t_n)}\|\bphi_h^i\|_{1-r}^2}\right]^{1/2}~\times \\
& \left[{4k\sum_{i=1}^n \Big(\sum_{j=1}^i \int_{t_{j-1}}^{t^j} \tau_i^{s/2}e^{\alpha (t_i-t_n)}(t-t_{j-1})
\beta(t_i-t)\{\delta\|\bu_h\|_{r+1}+\|\bu_{ht}\|_{r+1}
\}\,dt\Big)^2}\right]^{1/2}. \nonumber 
\end{align}
In Lemma \ref{dth2}, we find that the estimates of $\|\bu_{htt}\|_{r-1}$ and $\|\bu_{ht}\|_{r+1}$ are similar, in fact, the powers of $t_i$ are same. Therefore,the right-hand side of (\ref{eve02}) involving $\|\bu_{ht}\|_{r+1}$ can be estimated
similarly as in (\ref{eve01a}). The terms involving $\|\bu_h\|_{r+1}$ are clearly easy to estimate. But for the sake of completeness, we provide  the case, when $ r=s=0.$
 \begin{align*}
& 4\delta^2 k\sum_{i=1}^n \Big(\sum_{j=1}^i \int_{t_{j-1}}^{t^j} e^{\alpha (t_i-t_n)}
(t-t_{j-1})\beta(t_i-t) \|\nabla\bu_h\|~dt\Big)^2 \nonumber \\
\le & ~4\gamma^2\delta^2 e^{-2\alpha t_n} k^3\sum_{i=1}^n e^{-2(\delta-\alpha)t_i} \Big(\sum_{j=1}^i \int_{t_{j-1}}^{t^j}e^{(\delta-\alpha)t}\|\nabla\hbu_h\|~dt\Big)^2 \nonumber \\
 \le & ~4\gamma^2\delta^2 e^{-2\alpha t_n} k^3\sum_{i=1}^n e^{-2(\delta-\alpha)t_i} \Big(\int_0^{t_i} e^{2(\delta-\alpha)s}ds\Big) \Big(\int_0^{t_i} \|\nabla\hbu_h(s)\|^2 ds\Big) \nonumber \\
 \le & \frac{2\gamma^2\delta^2}{2 (\delta-\alpha)} e^{-2\alpha t_n}k^3 \sum_{i=1}^n e^{2(\delta-\alpha)k}\big(Ke^{2\alpha t_i}\big) \le Kk^3 e^{2\delta k}.
 \end{align*}
This completes the rest of the proof.
\end{proof}
 
\begin{lemma}\label{pree}
 Assume (${\bf A1}$)-(${\bf A2}$) and a spatial discretization scheme that satisfies
conditions (${\bf B1}$)-(${\bf B2}$) and (${\bf B4}$). Let $0 < \alpha < \min 
\big\{\delta, \mu\lambda_1\big\},$ and
$$ 1+(\mu\lambda_1)k > e^{2\alpha k} $$
which holds for $0<k<k_0,~k_0>0.$ Further, assume that $\bu_h(t)$ and $\bV^n$ satisfy
 (\ref{dwfj}) and (\ref{fdbejv}), respectively. Then, there is a positive
constant $K$ such that
\begin{eqnarray} \label{pree1}
 \|\bxi_n\|^2&+& e^{-2\alpha t_n} k\sum_{i=1}^n e^{2\alpha t_i} \|\bxi_i\|_1^2 \le  Kk\big(1+\log\frac{1}{k}\big), \\
 \|\bxi_n\|_1^2&+& k\sum_{i=1}^n \{\|\bxi_i\|_2^2+\|\pt\bxi_i\|^2\} \le K.  \label{pree2}
\end{eqnarray}
\end{lemma}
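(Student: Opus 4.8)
The plan is to derive both estimates by energy arguments applied to the error equation \eqref{eebelin} for $\bxi_n$, exploiting the positivity \eqref{rrp} of the right rectangle rule to control the memory term $a(q_r^n(\bxi),\bphi_h)$, and using Lemma \ref{e-ve} to bound the consistency contributions $E^n(\bu_h)$ and $\ve_a^n(\bu_h)$ on the right-hand side. For the first estimate \eqref{pree1} I would test \eqref{eebelin} with $\bphi_h = e^{2\alpha t_n}\bxi_n$ (equivalently, work with $\tilde{\bxi}_n = e^{\alpha t_n}\bxi_n$ as in Lemma \ref{stb}): the term $(\pt\bxi_n,\bxi_n)$ gives $\tfrac12\pt\|\bxi_n\|^2$ up to the usual $e^{\alpha k}$ rescaling, the viscous term gives $\mu\|\nabla\bxi_n\|^2$, and the exponential weight together with $\|\bxi_n\|^2 \le \lambda_1^{-1}\|\nabla\bxi_n\|^2$ and the smallness of $\alpha$ absorbs the lower-order term produced by the weight, while the memory term is nonnegative after summation by \eqref{rrp}. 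Summing over $i=1,\dots,n$, multiplying by $e^{-2\alpha t_n}$, and using $\bxi_0 = 0$ (since $\bU^0 = \bu_{0h} = \bu_h(0)$ and $\bV^0$ is initialized consistently) leaves
$$
\|\bxi_n\|^2 + \mu\, e^{-2\alpha t_n} k\sum_{i=1}^n e^{2\alpha t_i}\|\nabla\bxi_i\|^2 \le 2 e^{-2\alpha t_n} k\sum_{i=1}^n e^{2\alpha t_i}\big(E^i(\bu_h)(\bxi_i) + \ve_a^i(\bu_h)(\bxi_i)\big),
$$
and then Lemma \ref{e-ve} with $r=1$, $s=0$ bounds the right-hand side by $K k \big(1+\log\tfrac1k\big)^0 \big(k\sum e^{2\alpha(t_i-t_n)}\|\bxi_i\|_0^2\big)^{1/2}$; hiding $\|\bxi_i\|$ in the left side via Young's inequality (and $\|\bxi_i\| \le \lambda_1^{-1/2}\|\nabla\bxi_i\|$ to match the viscous term) gives \eqref{pree1} — the logarithmic factor in the statement comes from using the $r=0$ rather than the $r=1$ branch of Lemma \ref{e-ve} in one of the two estimates, so I would check carefully which branch is needed to land exactly on $Kk(1+\log\tfrac1k)$.

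For the second estimate \eqref{pree2} I would test \eqref{eebelin} with $\bphi_h = \pt\bxi_n$ and, separately or in the same chain, with $\bphi_h = -\td_h\bxi_n$. Testing with $\pt\bxi_n$ produces $\|\pt\bxi_n\|^2$ on the left plus $\tfrac{\mu}{2}\pt\|\nabla\bxi_n\|^2$; the memory term $a(q_r^n(\bxi),\pt\bxi_n)$ is handled by the discrete summation-by-parts identity \eqref{sumbp} (rewriting $q_r^n(\bxi)$ via the recursion \eqref{une01}-\eqref{une02} analogue to express it through $\bxi^n_\beta$ and $\pt\bxi^n_\beta$, exactly as in the proof of Lemma \ref{unif.est0}), which after summation contributes controllable boundary terms in $\|\nabla\bxi^n_\beta\|^2$. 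Testing with $-\td_h\bxi_n$ adds $\mu\|\td_h\bxi_n\|^2$ to the left. On the right, $E^i(\bu_h)$ and $\ve_a^i(\bu_h)$ are again estimated by Lemma \ref{e-ve}, now with $s=0$ and the appropriate $r$ so that the test-function norm appearing is $\|\pt\bxi_i\|$ or $\|\td_h\bxi_i\|$ (i.e.\ $\|\cdot\|_{1-r}$ with $r=0$ gives $\|\cdot\|_1$-type control matching $\td_h\bxi$ after an inverse-type bound, or $r$ chosen to give the $\|\pt\bxi_i\|$ factor directly); one absorbs these into the left-hand dissipation. Summing over $i$, using $\bxi_0 = 0$, and invoking the $a$ $priori$ bounds on $\bu_h$ from Lemma \ref{dth2} to control all the data-dependent consistency norms completes \eqref{pree2}.

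The main obstacle will be the careful treatment of the discrete memory term when the test function is $\pt\bxi_n$ (and $-\td_h\bxi_n$): unlike the simple energy test $\bphi_h=\bxi_n$ where positivity \eqref{rrp} applies directly, here one must convert $a(q_r^n(\bxi),\pt\bxi_n)$ into a telescoping form plus genuinely nonnegative quadratic terms, which requires using the kernel structure $\beta(t) = \gamma e^{-\delta t}$ through the recursion for $q_r^n(\bxi)$ (or $\bxi^n_\beta$) exactly as was done in Lemmas \ref{unif.est0}-\ref{unif.est1} for $\bU^n$; the factors $(1-e^{-\delta k})/k$, bounded below by $\delta e^{-\delta k}$ via the mean value theorem, are what make the boundary terms controllable. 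A secondary technical point is tracking the exact power of $k$ and the logarithm: the sub-optimal $\sqrt{k\log(1/k)}$-type rate in $\|\bxi_n\|$ versus the $O(1)$ bound (after summation) in the $H^1$/$\Delta_h$ quantities in \eqref{pree2} stems from which branch $(r,s)$ of Lemma \ref{e-ve} one is forced into by the available regularity of $\bu_{htt}$ near $t=0$ (only $\tau^{(r+1)}$-weighted bounds in \eqref{dth13}), so I would keep the weights $\tau_i^s$ explicit throughout and only set $s=0$ at the end.
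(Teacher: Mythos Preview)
Your approach to \eqref{pree1} is essentially the paper's. The correct branch of Lemma~\ref{e-ve} is $r=0,\,s=0$: this produces $Kk^{1/2}(1+\log\tfrac1k)^{1/2}\big(k\sum e^{2\alpha(t_i-t_n)}\|\nabla\bxi_i\|^2\big)^{1/2}$, and Young's inequality against the viscous term yields exactly $Kk(1+\log\tfrac1k)$. Your suggested $r=1,\,s=0$ gives $k^{(1+0-1)/2}=k^0$, not $k^1$, so it would only produce an $O(1)$ bound, not the stated $O(k\log\tfrac1k)$.

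There is a genuine gap in your treatment of \eqref{pree2}. The recursion identity behind Lemma~\ref{unif.est0} (writing $a(\bU^n_\beta,\bU^n)$ as $\tfrac{1-e^{-\delta k}}{k\gamma}\|\nabla\bU^n_\beta\|^2+\tfrac{e^{-\delta k}}{\gamma}a(\bU^n_\beta,\pt\bU^n_\beta)$) works because the test function is $\bU^n$ itself, which can be expressed through $\bU^n_\beta$ and $\pt\bU^n_\beta$ via \eqref{une01}--\eqref{une02}. When the test function is $\pt\bxi_n$, no such identity is available: $a(\bxi^n_\beta,\pt\bxi_n)$ does not telescope into a nonnegative quadratic plus controllable boundary terms, and attempting to express $\pt\bxi_n$ through the recursion introduces second differences $\pt^2\bxi^n_\beta$ that you cannot control.

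The paper's route is different and relies on an \emph{order of operations} you omit. First test with $-\td_h\bxi_i$ (with $\alpha=0$): here the memory term $a(q_r^i(\bxi),-\td_h\bxi_i)=(\td_h q_r^i(\bxi),\td_h\bxi_i)$ is still covered by the positivity \eqref{rrp}, and Lemma~\ref{e-ve} with $r=1,\,s=0$ gives $\|\bxi_n\|_1^2+k\sum_i\|\td_h\bxi_i\|^2\le K$. \emph{Only then} test with $\pt\bxi_i$; now the memory term is simply estimated by Cauchy--Schwarz,
\[
2k\sum_{i}e^{2\alpha t_i}a(q_r^i(\bxi),\pt\bxi_i)\le \tfrac{k}{2}\sum_i e^{2\alpha t_i}\|\pt\bxi_i\|^2+K\,k\sum_i\Big(k\sum_{j\le i}e^{-(\delta-\alpha)(t_i-t_j)}\|\td_h\tbxi_j\|\Big)^2,
\]
and the double sum is bounded (via change of variable $l=i-j$ and interchange of summation) by $K\,k\sum_j\|\td_h\tbxi_j\|^2\le K$, using the bound just established. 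The logical dependency---$H^2$ bound first, then feed it into the $\pt\bxi$ estimate---is the missing ingredient in your plan.
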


\begin{proof}
For $n=i,$ we put $\bphi_h=\bxi_i$ in (\ref{eebelin}) and with the observation
$$ (\pt\bxi_i,\bxi_i)=\frac{1}{2k}(\bxi_i-\bxi_{i-1},\bxi_i) \ge \frac{1}{2k}(\|\bxi_i\|^2
   -\|\bxi_{i-1}\|^2)=\frac{1}{2}\pt\|\bxi_i\|^2, $$
we find that
\begin{equation}\label{pree01}
 \pt\|\bxi_i\|^2+2\mu\|\nabla\bxi_i\|^2+a(q^i_r(\bxi),\bxi_i) \le -2E^i(\bu_h)(\bxi_i) -2\ve_a^i(\bu_h)(\bxi_i).
\end{equation}
Multiply (\ref{pree01}) by $ke^{2\alpha t_i}$ and sum over $1\le i\le n\le N$ to obtain
\begin{align}\label{pree02}
\|\tbxi_n\|^2 -\sum_{i=1}^{n-1} (e^{2\alpha k}-1)\|\tbxi_i\|^2+2\mu k\sum_{i=1}^n 
\|\nabla\tbxi_i\|^2 & \le -2k\sum_{i=1}^n e^{2\alpha t_i}\Big\{E^i(\bu_h)(\bxi_i) +\ve_a^i(\bu_h)(\bxi_i)\Big\} \nonumber \\
& \le \mu k\sum_{i=1}^n \|\nabla\tbxi_i\|^2+Kk \big(1+\log\frac{1}{k}\big)
 e^{2\alpha t_{n+1}}.
\end{align}
Recall that $\tilde{v}(t)=e^{\alpha t}v(t)$.
Note that we have dropped the quadrature term on the left hand-side of (\ref{pree01}) after summation as it is non-negative. Finally, we have used Lemma \ref{e-ve} for $s=r=0$.
We note that for $0<k<k_0$
$$ \mu-\frac{e^{2\alpha k}-1}{k\lambda_1} >0, $$
and hence,
\begin{align}\label{pree06}
\|\tbxi_n\|^2 &+ (\mu-\frac{e^{2\alpha k}-1}{k\lambda_1}) k\sum_{i=1}^n 
\|\nabla\tbxi_i\|^2\le Kk \big(1+\log\frac{1}{k}\big)e^{2\alpha t_{n+1}}.
\end{align}
Multiply (\ref{pree06}) by $e^{-2\alpha t_n}$ to establish (\ref{pree1}).
Next, for $n=i,$ we put $\bphi_h=-\td_h\bxi_i$ in (\ref{eebelin}) and follow
as above to obtain the first part of (\ref{pree2}), that is,
$$ \|\bxi_n\|_1^2+k\sum_{i=1}^n \|\bxi_i\|_2^2 \le K. $$
Here, we have used (\ref{eve1}) for $s=0, r=1$ with $\alpha=0$  replacing
$\bphi_h^{i}$ by $\td_h\bxi_i.$

Finally, for $n=i,$ we put $\bphi_h=\pt\bxi_i$ in (\ref{eebelin}) to find that
\begin{align}\label{pree07}
2\|\pt\bxi_i\|^2+\mu\pt\|\bxi_i\|_1^2 \le -2a(q_r^i(\bxi),\pt\bxi_i)
-2E^i(\bu_h)(\pt\bxi_i) -2\ve_a^i(\bu_h)(\pt\bxi_i).
\end{align}
Multiply (\ref{pree07}) by $ke^{2\alpha t_i}$ and sum over $1\le i\le n\le N$.
As has been done earlier, we can estimate the last two resulting terms on the right-hand side of (\ref{pree07}) using (\ref{eve1}) for $r=s=0$ as
$$ \frac{k}{2} \sum_{i=1}^n e^{2\alpha t_i}\|\pt\bxi_i\|^2+K. $$
The only difference is that the resulting double sum (the term involving $q_r^i$)
is no longer non-negative and hence, we need to estimate it. Note that
\begin{align}\label{pree08}
2k\sum_{i=1}^n e^{2\alpha t_i} a(q_r^i(\bxi),\pt\bxi_i)=2\gamma k^2\sum_{i=1}^n
\sum_{j=1}^i e^{-(\delta-\alpha)(t_i-t_j)} a(\tbxi_j,e^{\alpha t_i}\pt\bxi_i) \\
\le \frac{k}{2} \sum_{i=1}^n e^{2\alpha t_i}\|\pt\bxi_i\|^2
+K(\gamma) k\sum_{i=1}^n \Big(k\sum_{j=1}^i e^{-(\delta-\alpha)(t_i-t_j)} 
\|\td_h\tbxi_j\|\Big)^2. \nonumber
\end{align}
Using change of variable and change of order of double sum, we obtain
\begin{align*}
I & := K(\gamma) k\sum_{i=1}^n \Big(k\sum_{j=1}^i e^{-(\delta-\alpha)(t_i-t_j)} 
\|\td_h\tbxi_j\|\Big)^2 \\
& \le K(\gamma) k\sum_{i=1}^n \Big(k\sum_{j=1}^i e^{-(\delta-\alpha)(t_i-t_j)}
\Big) \Big(k\sum_{j=1}^i e^{-(\delta-\alpha)(t_i-t_j)}\|\td_h\tbxi_j\|^2\Big) \\
& \le K(\alpha,\gamma) e^{(\delta-\alpha)k} k^2 \sum_{i=1}^n k\sum_{j=1}^i e^{-(\delta-\alpha)(t_i-t_j)}\|\td_h\tbxi_j\|^2.
\end{align*}
Introduce $l=i-j$ to find that
\begin{align*}
I& \le K(\alpha,\gamma) e^{(\delta-\alpha)k}k^2\sum_{i=1}^n k\sum_{l=i-1}^0 e^{-(\delta-\alpha) t_l}  \|\td_h\tbxi_{i-l}\|^2~~~\mbox{for} ~l=i-j\\
&= K(\alpha,\gamma) e^{(\delta-\alpha)k}k^2\sum_{i=1}^n k\sum_{l=1}^{i} e^{-(\delta-\alpha)t_{l-1}}\|\td_h\tbxi_{i-l+1}\|^2.
\end{align*}
With change of summation, we now arrive at 
\begin{align}\label{pree09}
I&\le  K(\alpha,\gamma) e^{(\delta-\alpha)k}k^2\sum_{l=1}^n k\sum_{i=l}^{n} e^{-(\delta-\alpha)t_{l-1}}\|\td_h\tbxi_{i-l+1}\|^2 \nonumber \\
&= K(\alpha,\gamma) e^{(\delta-\alpha)k}k^2 \sum_{l=1}^n k\sum_{j=1}^{n-l+1} e^{-(\delta-\alpha)t_{l-1}}\|\td_h\tbxi_j\|^2~~~\mbox{for} ~j=i-l+1 \nonumber \\
& \le K(\alpha,\gamma) e^{(\delta-\alpha)k}k \Big(k\sum_{l=1}^{n-1} e^{-(\delta-\alpha)t_l}\Big)\Big(k\sum_{j=1}^n \|\td_h\tbxi_j\|^2\Big) \le K.
\end{align}
Combining (\ref{pree08})-(\ref{pree09}), we find that
\begin{align*}
2k\sum_{i=1}^n e^{2\alpha t_i} a(q_r^i(\bxi),\pt\bxi_i)\le \frac{k}{2}
\sum_{i=1}^n e^{2\alpha t_i}\|\pt\bxi_i\|^2+K.
\end{align*}
Therefore, we obtain
\begin{align}
k\sum_{i=1}^n e^{2\alpha t_i}\|\pt\bxi_i\|^2+\mu\|\tbxi_n\|_1^2 \le K+\mu
k \sum_{i=1}^{n-1} \frac{ (e^{2\alpha k}-1)}{k}\|\tbxi_i\|_1^2.
\end{align}
Use (\ref{pree1}) and the fact that $(e^{2\alpha k}-1)/k \le K(\alpha)$ to
complete the rest of the proof.
\end{proof}

\begin{remark}
We note that the restriction on $k$, that is $0<k<k_0$ is not same in the Lemmas \ref{stb},
and \ref{pree}. Therefore, we take minimum of the $k_0$'s from Lemmas
\ref{stb} and \ref{pree} and denote it as $k_{00}$, then for all $k$ satisfying $0<k<k_{00}$, all the result should hold.
\end{remark}

Analogous to the semi-discrete case, we resort to duality argument to obtain optimal 
$L^2(\bL^2)$ estimate. Consider the following backward problem: For a given
$\bW_n$ and $\g_i,$ let $\bW_i,~n\ge i\ge 1 $ satisfy 
\begin{equation}\label{bwprob}
(\bphi_h,\pt\bW_i)-\mu a(\bphi_h, \bW_i)-k\sum_{j=i}^n \beta(t_j-t_i) a(\bphi_h, \bW_j) 
=(\bphi_h, e^{2\alpha t_i} \g_i),\bphi_h\in \bJ_h.
\end{equation}
The following {\it a priori} estimates are easy to derive.
\begin{lemma}\label{bwest}
Let the assumptions (${\bf A2}$), (${\bf B1}$), (${\bf B2}$) and (${\bf B4}$) hold. Then,
for $0<k<k_0$, the following estimates hold under appropriate assumptions on $\bW_n$ and $g$:
\begin{equation*}
\|\bW_0\|_r^2+ k\sum_{i=1}^n e^{-2\alpha t_i}\{\|\bW_i\|_{r+1}+ \|\pt\bW_i\|_{r-1}\} \le K\big\{\|\bW_n\|_r^2+k\sum_{i=1}^n e^{2\alpha t_i} \|\g_i\|_{r-1}^2\big\},
\end{equation*}
where $r\in \{0,1\}$.
\end{lemma}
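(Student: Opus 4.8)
The plan is to run the standard backward-in-time energy argument on \eqref{bwprob}, exploiting that the running sum $k\sum_{j=i}^n \beta(t_j-t_i)\,a(\bphi_h,\bW_j)$ is, after a reversal of the time index $m=n-i$, exactly a forward discrete memory term of the type already handled in Lemmas \ref{unif.est0}--\ref{unif.est1}. Concretely, I would first introduce $\bW^*_{i}=k\sum_{j=i}^n \beta(t_j-t_i)\bW_j$ and check the backward analogue of \eqref{une01}--\eqref{une02}, namely $\bW^*_i=k\gamma\bW_i+e^{-\delta k}\bW^*_{i+1}$ and hence $\pt\bW^*_i$ (or rather the backward difference $\tfrac1k(\bW^*_{i+1}-\bW^*_i)$) $=-\gamma\bW_i+\tfrac{1-e^{-\delta k}}{k}\bW^*_{i+1}$. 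This identity converts the memory term into a perfect backward difference plus a nonnegative term, just as in the proof of Lemma \ref{unif.est0}. One also records the summation-by-parts/telescoping fact that $(\bphi_h,\pt\bW_i)$ with $\bphi_h=\bW_i$ gives $\tfrac1{2k}(\|\bW_i\|^2-\|\bW_{i+1}\|^2)$ up to a favorable sign (the backward scheme is dissipative when read from $n$ down to $0$).

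For the case $r=0$: take $\bphi_h=e^{-2\alpha t_i}\bW_i$ in \eqref{bwprob}, sum over $i=1,\dots,n$, use the identity above to absorb the memory term into a telescoping sum of $\|\bW^*_i\|_1^2$ plus a nonnegative quadratic form, bound the right-hand side by $\tfrac\mu2 k\sum e^{-2\alpha t_i}\|\bW_i\|_1^2 + Ck\sum e^{2\alpha t_i}\|\g_i\|_{-1}^2$ via Cauchy--Schwarz and Poincaré, and handle the exponential weight exactly as in \eqref{discrete01} (choosing the time step so that $1+2\alpha k\ge e^{\alpha_0 k}$, $\alpha_0<2\alpha$) so that the weight shift is absorbed. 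This yields $\|\bW_0\|^2 + k\sum_{i=1}^n e^{-2\alpha t_i}\|\bW_i\|_1^2 \le K\{\|\bW_n\|^2 + k\sum e^{2\alpha t_i}\|\g_i\|_{-1}^2\}$. To get the $\|\pt\bW_i\|_{-1}$ term, rewrite \eqref{bwprob} as an identity for $\pt\bW_i$ tested against $\td_h^{-1}(\cdot)$: $\|\pt\bW_i\|_{-1}^2 \lesssim \|\bW_i\|_1^2 + (k\sum_{j\ge i}\beta(t_j-t_i)\|\bW_j\|_1)^2 + e^{4\alpha t_i}\|\g_i\|_{-1}^2$, and then $k\sum_i e^{-2\alpha t_i}\|\pt\bW_i\|_{-1}^2$ is controlled by the already-estimated quantities after a Young/Cauchy--Schwarz bound on the memory convolution (same computation as in the estimate of $I$ in the proof of Lemma \ref{pree}).

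For the case $r=1$: repeat with $\bphi_h=-e^{-2\alpha t_i}\td_h\bW_i$, using $a(\bW_i,-\td_h\bW_i)=\|\td_h\bW_i\|^2=\|\bW_i\|_2^2$ (on $\bJ_h$), $(\pt\bW_i,-\td_h\bW_i)$ telescoping to $\pt\|\nabla\bW_i\|^2$-type terms, and the same memory-term manipulation now producing telescoping $\|\td_h\bW^*_i\|^2$. The right side is bounded by $\tfrac\mu2 k\sum e^{-2\alpha t_i}\|\bW_i\|_2^2 + Ck\sum e^{2\alpha t_i}\|\g_i\|^2$, giving $\|\bW_0\|_1^2 + k\sum e^{-2\alpha t_i}\|\bW_i\|_2^2\le K\{\|\bW_n\|_1^2 + k\sum e^{2\alpha t_i}\|\g_i\|^2\}$; the $\|\pt\bW_i\|$ bound then follows by testing with $e^{-2\alpha t_i}\pt\bW_i$ exactly as $\|\pt\bxi_i\|$ was obtained in Lemma \ref{pree}, with the non-sign-definite double sum coming from the memory term estimated by the $I$-computation of that proof.

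The main obstacle, and the only place real care is needed, is the treatment of the discrete memory term under the exponential weight $e^{\pm2\alpha t_i}$: one must verify that the convolution kernel $e^{-(\delta-\alpha)(t_j-t_i)}$ is summable uniformly in $n,k$ (which needs $\alpha<\delta$, already assumed), and that the ``backward difference of $\bW^*$'' identity genuinely produces a nonnegative quadratic form after summation — i.e. the positivity \eqref{rrp} of the right-rectangle rule transfers to the backward problem. Once that is in place, every remaining step is a direct transcription of the energy arguments in Lemmas \ref{unif.est0}, \ref{unif.est1} and \ref{pree}, so I would state the result as ``easy to derive'' and give only the $r=0$, first-estimate computation in detail, remarking that $r=1$ and the $\pt\bW_i$ bounds follow by the now-familiar choices of test function.
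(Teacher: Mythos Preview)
The paper does not actually prove this lemma: immediately after stating it, the authors write only ``The following \textit{a priori} estimates are easy to derive'' and move on. There is therefore no proof in the paper to compare your proposal against.

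Your sketch is the natural way to fill in what the authors omit, and the ingredients you identify are the right ones: after the index reversal $m=n-i$, the backward memory sum $k\sum_{j\ge i}\beta(t_j-t_i)a(\cdot,\bW_j)$ becomes a forward right-rectangle sum, so the positivity \eqref{rrp} carries over (equivalently, one simply swaps the order of summation in the resulting double sum and recovers exactly the nonnegative quadratic form of \eqref{rrp}); the recurrence $\bW^*_i=k\gamma\bW_i+e^{-\delta k}\bW^*_{i+1}$ that you record is correct and plays the same role as \eqref{une01}--\eqref{une02}; and the weight manipulation via \eqref{discrete01} and the treatment of the non-sign-definite double sum via the $I$-computation of Lemma~\ref{pree} are indeed the relevant tools for the $\partial_t\bW_i$ bounds. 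One point deserving a bit more care than you indicate is the telescoping step: because the scheme \eqref{bwprob} is explicit in $\bW_{i-1}$ (it is the discrete adjoint of backward Euler, hence forward-Euler-like when read downward), testing directly with $\bW_i$ produces an extra term $-\tfrac{k}{2}\|\partial_t\bW_i\|^2$ with the wrong sign. This is harmless once the $\|\partial_t\bW_i\|_{r-1}$ estimate is in hand (or one tests with the shifted $\bW_{i-1}$ and controls the cross terms), but it is worth flagging rather than folding into ``the backward scheme is dissipative''. With that caveat, your argument is sound and supplies considerably more detail than the paper itself.
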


\begin{lemma}\label{neg}
Under the assumptions of Lemma \ref{bwest}, the following estimate holds:
\begin{equation}
e^{-2\alpha t_n} k \sum_{i=1}^n e^{2\alpha t_i} \|\bxi_i\|^2 \le Kk^2. \label{neg1}
\end{equation}
\end{lemma}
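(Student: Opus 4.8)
The plan is to use the backward dual problem (\ref{bwprob}) as a test vehicle: choose the data $\g_i = \bxi_i$ and $\bW_n = 0$ in (\ref{bwprob}), so that by Lemma \ref{bwest} (with $r=0$) the dual solution satisfies $\|\bW_0\|^2 + k\sum_{i=1}^n e^{-2\alpha t_i}\{\|\bW_i\|_1^2 + \|\pt\bW_i\|_{-1}^2\} \le K\, k\sum_{i=1}^n e^{2\alpha t_i}\|\bxi_i\|^2$. The quantity we want to bound, $e^{-2\alpha t_n} k\sum_{i=1}^n e^{2\alpha t_i}\|\bxi_i\|^2$, can then be written, via the defining equation of $\bW_i$ with $\bphi_h = \bxi_i$, as $k\sum_{i=1}^n\big[(\bxi_i, \pt\bW_i) - \mu a(\bxi_i, \bW_i) - k\sum_{j=i}^n \beta(t_j-t_i) a(\bxi_i, \bW_j)\big]$. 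The first step is to perform a discrete summation by parts (using (\ref{sumbp})) on the $k\sum_i (\bxi_i,\pt\bW_i)$ term to move the difference quotient onto $\bxi_i$, producing $-k\sum_i(\pt\bxi_i, \bW_{i-1})$ plus boundary terms that vanish because $\bW_n = 0$ and $\bxi_0 = 0$. Similarly the double sum involving $\beta$ should be reorganized by interchanging the order of summation so that it matches the quadrature term $a(q_r^i(\bxi),\,\cdot\,)$ appearing in the error equation (\ref{eebelin}).

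After this rearrangement, the expression becomes $k\sum_{i=1}^n\big[(\pt\bxi_i, \bW_{i-1}) + \mu a(\bxi_i, \bW_i) + a(q_r^i(\bxi), \bW_i)\big]$ up to lower-order corrections coming from the mismatch between $\bW_{i-1}$ and $\bW_i$ in the difference-quotient term. Now I would invoke the error equation (\ref{eebelin}) with $\bphi_h = \bW_i$ (or $\bW_{i-1}$, handling the discrepancy as a remainder), which identifies the bulk of this sum with $-k\sum_i\big(E^i(\bu_h)(\bW_i) + \ve_a^i(\bu_h)(\bW_i)\big)$. This is precisely the quantity controlled by Lemma \ref{e-ve}: applying (\ref{eve1}) with $s=r=0$ (and absorbing the exponential weights appropriately) gives a bound of the form $K k \,(1+\log\frac1k)^{1/2}\big(k\sum_i e^{2\alpha(t_i - t_n)}\|\bW_i\|_1^2\big)^{1/2}$. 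Here is the key gain: the test function is now $\bW_i$, whose $\bH^1$-norm in the weighted $\ell^2$ sense is controlled by Lemma \ref{bwest} by the very quantity $\big(k\sum_i e^{2\alpha t_i}\|\bxi_i\|^2\big)^{1/2}$ we are estimating. Writing $S := k\sum_{i=1}^n e^{2\alpha t_i}\|\bxi_i\|^2$, we arrive at an inequality of the shape $e^{-2\alpha t_n} S \le K k (1+\log\frac1k)^{1/2}\, e^{-\alpha t_n} S^{1/2}$ — but one must be careful with the exponential bookkeeping; the cleanest route is to keep the weight $e^{2\alpha t_i}$ attached throughout and only multiply by $e^{-2\alpha t_n}$ at the very end, so that the duality pairing produces $S \le K k (\cdots) \cdot S^{1/2}$ after matching weights, giving $S^{1/2} \le K k (1+\log\frac1k)^{1/2}$, hence $e^{-2\alpha t_n} S \le K k^2 (1+\log\frac1k)$.

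Comparing with the claimed bound $Kk^2$ (no log factor), the remaining step is to sharpen: one extra power of $k$-smallness is recovered because, in the negative-norm pairing, the test function $\bW_i$ appears rather than $\bxi_i$ itself, and the relevant regularity of $\bW$ from Lemma \ref{bwest} with $r=0$ controls $\|\pt\bW_i\|_{-1}$ and $\|\bW_i\|_1$ without the logarithm — the log in Lemma \ref{e-ve} attaches to the $\|\bphi_h\|_{1-r}$ factor with $r=0$, i.e. to $\|\bW_i\|_1$, but the $\|\bu_{htt}\|_{-1}$ integral weighted by $(t-t_{i-1})^2 \tau^{-1}$ near $t=0$ is what generated it; when the dual weight $e^{-2\alpha t_i}$ is used the singular contribution is tamed and one obtains the clean $k^2$. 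I expect the main obstacle to be exactly this careful tracking of the time-weights through the summation-by-parts and the application of Lemma \ref{e-ve}, together with correctly handling the $\bW_{i-1}$-versus-$\bW_i$ shift in the discrete dual equation; the algebra of the double sums (change of variable $l = i-j$, interchange of summation order) is routine and mirrors the manipulations already carried out in the proof of Lemma \ref{pree}.
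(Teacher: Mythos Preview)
Your overall strategy (test the backward problem with $\bphi_h=\bxi_i$, sum, move $\pt$ onto $\bxi_i$, cancel the two $\beta$-double sums after interchanging the order of summation, then invoke the error equation (\ref{eebelin}) with $\bphi_h=\bW_i$) is exactly the paper's route. The gap is in how you close without the logarithm.

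You apply Lemma~\ref{bwest} with $r=0$, which only yields control of $\|\bW_i\|_1$ and $\|\pt\bW_i\|_{-1}$, and then feed $\bW_i$ into Lemma~\ref{e-ve} with $r=s=0$. That combination is precisely what produces the $(1+\log\frac1k)$ factor, and your closing paragraph's explanation for why it should disappear (``the dual weight $e^{-2\alpha t_i}$ tames the singular contribution'') is not correct: the log comes from $\int_0^k t^{-1}(t-0)^2\,dt$-type terms in the $\|\bu_{htt}\|_{-1}$ integral, and no exponential weight removes that. The paper instead takes $r=1$ in Lemma~\ref{bwest} (with $\bW_n=(-\td_h)^{-1}\bxi_n$, so $\|\bW_n\|_1=\|\bxi_n\|_{-1}$ is absorbed by the left side of (\ref{neg02})), obtaining control of $\|\bW_i\|_2$ and $\|\pt\bW_i\|$. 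It then \emph{bypasses} Lemma~\ref{e-ve} entirely and bounds $E^i(\bu_h)(\bW_i)$ directly by $\|\bu_{htt}\|_{-2}\,\|\bW_i\|_2$; by (\ref{dth13}) with $r=-1$ the quantity $e^{-2\alpha t}\int_0^t e^{2\alpha s}\|\bu_{hss}\|_{-2}^2\,ds$ is bounded with no $\tau$-weight, so no logarithm appears. The quadrature error $\ve_a^i(\bu_h)(\bW_i)$ is handled the same way using $\|\bu_h\|$, $\|\bu_{hs}\|$ against $\|\bW_i\|_2$.

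Second, the ``lower-order correction'' you defer is not lower order: writing $-(\pt\bxi_i,\bW_{i-1})=-(\pt\bxi_i,\bW_i)+k(\pt\bxi_i,\pt\bW_i)$, the extra term $k\sum_i k(\pt\bxi_i,\pt\bW_i)$ must be estimated explicitly. The paper bounds it by $k\big(k\sum_i e^{2\alpha t_i}\|\pt\bxi_i\|^2\big)^{1/2}\big(k\sum_i e^{-2\alpha t_i}\|\pt\bW_i\|^2\big)^{1/2}$, using the second estimate in Lemma~\ref{pree} for the first factor and the $r=1$ case of Lemma~\ref{bwest} for the second. With your $r=0$ choice you only have $\|\pt\bW_i\|_{-1}$, which would force $\|\pt\bxi_i\|_1$ on the other side --- not available.
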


\begin{proof}
With
$$ \bW_n=(-\td_h)^{-1}\bxi_n,~~\g_i=\bxi_i~\forall i$$
we choose $\bphi_h=\bxi_i$ in (\ref{bwprob}) and use (\ref{eebelin}) to obtain
\begin{align}\label{neg01}
e^{2\alpha t_i}\|\bxi_i\|^2 &=(\bxi_i,\pt\bW_i)-\mu a(\bxi_i, \bW_i)-k\sum_{j=i}^n 
\beta(t_j-t_i) a(\bxi_i, \bW_j) \nonumber \\
&= \pt (\bxi_i,\bW_i)-(\pt\bxi_i,\bW_{i-1})-\mu a(\bxi_i, \bW_i)-k\sum_{j=i}^n \beta(t_j-t_i) 
a(\bxi_i, \bW_j) \nonumber \\
&= \pt (\bxi_i,\bW_i)+k(\pt\bxi_i,\pt\bW_i)+k\sum_{j=1}^i \beta(t_i-t_j) a(\bxi_j, \bW_i)+E^i(\bu_h)(\bW_i) \nonumber \\
&+\ve_a^i(\bu_h)(\bW_i)-k\sum_{j=i}^n \beta(t_j-t_i) a(\bxi_i, \bW_j). 
\end{align}
Multiply (\ref{neg01}) by $k$ and sum over $1\le i\le n$. Observe that the resulting two 
double sums cancel out (change of order of double sum). Therefore, we find that
\begin{align}\label{neg02}
k\sum_{i=1}^n e^{2\alpha t_i}\|\bxi_i\|^2+\|\bxi_n\|_{-1}^2 =k\sum_{i=1}^n \big[ 
k(\pt\bxi_i,\pt\bW_i)+E^i(\bu_h)(\bW_i)+\ve_a^i(\bu_h) (\bW_i)\big].
\end{align}
From (\ref{R1be}), we observe that
\begin{align}\label{neg03}
k\sum_{i=1}^n E^i(\bu_h)(\bW_i) \le k\sum_{i=1}^n \frac{1}{2k}\int_{t_{i-1}}^{t_i} 
(s-t_{i-1})\|\bu_{hss}\|_{-2}\|\bW_i\|_2 \nonumber \\
\le \frac{k}{4} e^{\alpha k}\Big(\int_0^{t_n} e^{2\alpha s}\|\bu_{hss}\|_{-2}^2 
ds\Big)^{1/2}\Big(k\sum_{i=1}^n e^{-2\alpha t_i}\|\bW_i\|_2^2\Big)^{1/2}.
\end{align}
Similar to (\ref{pree06}), we obtain
\begin{align}\label{neg04}
k\sum_{i=1}^n \ve_a^i(\bu_h)(\bW_i)\le K\Big(k^3\sum_{i=1}^n \int_0^{t_i} e^{2\alpha s} 
(\|\bu_h\|^2+\|\bu_{hs}\|^2)~ds\Big)^{1/2}\Big(k\sum_{i=1}^n e^{-2\alpha t_i} 
\|\bW_i\|_2^2\Big)^{1/2},
\end{align}
and
\begin{align}\label{neg05}
k\sum_{i=1}^n k(\pt\bxi_i,\pt\bW_i)\le k\Big(k\sum_{i=1}^n e^{2\alpha t_i} \|\pt \bxi_i\|^2\Big)^{1/2}\Big(k\sum_{i=1}^n e^{-2\alpha t_i}\|\pt\bW_i\|^2\Big)^{1/2}.
\end{align}
Incorporating (\ref{neg03})-(\ref{neg05}) in (\ref{neg02}), and using Lemmas \ref{dth2} and \ref{bwest}, we find that
\begin{align}\label{neg07}
k\sum_{i=1}^n e^{2\alpha t_i}\|\bxi_i\|^2+\|\bxi_n\|_{-1}^2 \le Kk^2 e^{2\alpha t_n}.
\end{align}
\end{proof}

Due to the non-smooth initial data, we need some intermediate results involving the
``hat operator'' which is defined as
\begin{equation}\label{sum0}
{\hat{\bphi}}_h^n := k\sum_{i=1}^n \bphi_h^i.
\end{equation}
This can be considered as discrete integral operator. We first observe, using (\ref{sumbp}), 
that
\begin{align*}
& k\sum_{j=1}^i \beta(t_i-t_j) \bphi_j = \gamma e^{-\delta t_i}k\sum_{j=1}^i e^{\delta t_j} 
\bphi_j \\
= & \gamma e^{-\delta t_i} \Big\{e^{\delta t_i}\hat{\bphi}_i- k\sum_{j=1}^{i-1} 
(\frac{e^{\delta t_{j+1}}-e^{\delta t_j}}{k}) \hat{\bphi}_j \Big\}= \pt^i \Big\{k 
\sum_{j=1}^i \beta(t_i-t_j) \hat{\bphi}_j \Big\}.
\end{align*}
Here $\pt^i$ means the difference formula with respect to $i$. Now rewrite the equations 
(\ref{eebelin}) (for $n=i$) as follows: 
\begin{align}\label{eebelin1}
 (\pt\bxi_i,\bphi_h)+& \mu a(\bxi_i,\bphi_h)+\pt^i \Big\{k\sum_{j=1}^i \beta(t_i-t_j)  a 
(\hbxi_j,\bphi_h) \Big\} = -E^i(\bu_h)(\bphi_h)-\ve_a^i(\bu_h)(\bphi_h).
\end{align}
We multiply (\ref{eebelin1}) by $k$ and sum over $1$ to $n$.  Using the fact that 
$\pt\hbxi_n=\bxi_n,$ we observe that
\begin{align}\label{eebelin1i}
 (\pt\hbxi_n,\bphi_h)+ \mu a(\hbxi_n,\bphi_h)+ a(q^n_r(\hbxi),\bphi_h) = -k\sum_{i=1}^n 
\big(E^i(\bu_h)(\bphi_h)+\ve_a^i(\bu_h)(\bphi_h)\big).
\end{align}

\begin{lemma}\label{ieens}
 Under the assumptions of Lemma \ref{pree}, the following estimate holds:
\begin{align}\label{ieens1}
\|\hbxi_n\|^2+ e^{-2\alpha t_n} k\sum_{i=1}^n e^{2\alpha t_i}\|\nabla\hbxi_i\|^2 \le  Kk^2(1+\log\frac{1}{k}).
\end{align}
\end{lemma}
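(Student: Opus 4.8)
The plan is to estimate $\hbxi_n$ by testing the equation \eqref{eebelin1i} with suitable multipliers, mirroring the structure of the proof of Lemma~\ref{pree} but now with the ``hat'' quantities. First I would take $\bphi_h = \hbxi_i$ in \eqref{eebelin1i} (written with $n$ replaced by $i$), use the elementary inequality $(\pt\hbxi_i,\hbxi_i)\ge \frac12\pt\|\hbxi_i\|^2$, multiply by $ke^{2\alpha t_i}$ and sum over $1\le i\le n$. The quadrature term $a(q_r^i(\hbxi),\hbxi_i)$ after summation is non-negative by the positivity of the right rectangle rule \eqref{rrp}, so it can be dropped from the left-hand side. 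This leaves the task of bounding the right-hand side contribution
$$ -2k\sum_{i=1}^n e^{2\alpha t_i}\Big(k\sum_{j=1}^i \big(E^j(\bu_h)(\hbxi_i)+\ve_a^j(\bu_h)(\hbxi_i)\big)\Big). $$

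The key point is that this double-sum quantity must be controlled by $Kk^2(1+\log\frac1k)^{1/2}\big(k\sum_{i=1}^n e^{2\alpha t_i}\|\nabla\hbxi_i\|^2\big)^{1/2}$ (or its $\bL^2$ analogue), so that after absorbing the gradient factor into the left side via Young's inequality one obtains the stated bound. To get this I would reorganize the inner sum $k\sum_{j=1}^i$ applied to $E^j$ and $\ve_a^j$: recalling $E^j(\bu_h)(\bphi) = \frac{1}{2k}\int_{t_{j-1}}^{t_j}(t-t_{j-1})(\bu_{htt},\bphi)\,dt$, summing over $j$ telescopes into a single integral $\frac12\int_0^{t_i}(\cdots)(\bu_{htt},\hbxi_i)\,dt$ with the sawtooth weight; one then uses Cauchy--Schwarz and the regularity estimate \eqref{dth13} for $\int_0^{t_i}\tau^{r+1}e^{2\alpha t}\|\bu_{htt}\|^2_{r-1}\,dt$ with $r=0$. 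Crucially, because we have the extra factor $k$ from the hat operator and the weight $(t-t_{j-1})$ integrates against $\|\bu_{htt}\|_{-1}$, the net power of $k$ improves from $k^{1/2}(1+\log\frac1k)^{1/2}$ in Lemma~\ref{e-ve} to $k^{3/2}(1+\log\frac1k)^{1/2}$, which is exactly what produces the $k^2$ on the right. A cleaner route is simply to invoke Lemma~\ref{e-ve}: note that $k\sum_{j=1}^i E^j(\bu_h)(\hbxi_i)$ is, up to the telescoping identity, of the form $E$-type functional applied with an extra $k$ and one fewer time-derivative, so choosing $s=0$, $r=0$ there (and treating the summed quadrature error term analogously via \eqref{errrr}) directly yields the bound with $k^{3/2}(1+\log\frac1k)^{1/2}$; squaring gives $k^3(1+\log\frac1k)$, and after the summation in $i$ one more factor of $t_n$ (absorbed in constants for $t_n$ bounded, or handled by the $e^{2\alpha t_i-2\alpha t_n}$ weights) produces $Kk^2(1+\log\frac1k)$.

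Assembling: after multiplying the summed inequality by $e^{-2\alpha t_n}$ and using that $\mu - (e^{2\alpha k}-1)/(k\lambda_1) > 0$ for $0<k<k_0$ to retain a positive multiple of $k\sum_i e^{2\alpha t_i}\|\nabla\hbxi_i\|^2$ on the left (exactly as in \eqref{pree06}), and after absorbing the gradient factor coming from the right-hand side estimate via Young's inequality, one arrives at \eqref{ieens1}. I would also use $\|\hbxi_i\| \le \lambda_1^{-1/2}\|\nabla\hbxi_i\|$ where needed to handle the $-(e^{2\alpha k}-1)\|\tbxi_i\|^2$ term that arises from the discrete product rule, just as in the proof of Lemma~\ref{pree}.

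The main obstacle I anticipate is the careful bookkeeping in the summed error terms $k\sum_{j=1}^i E^j(\bu_h)(\hbxi_i)$ and $k\sum_{j=1}^i\ve_a^j(\bu_h)(\hbxi_i)$: one must verify that summing over $j$ does not destroy the favorable $k$-power and that the regularity estimates from Lemma~\ref{dth2} are applied with the correct powers of $\tau^*$ and $e^{2\alpha t}$ weights — in particular the logarithmic factor arises from the singular behavior $\int_0^k t\,dt + k^2\int_k^{t_n}t^{-1}\,dt$ near $t=0$ due to the non-smooth initial data, and one has to track this through the extra summation. The quadrature-error term $\ve_a^j$ is comparatively benign since, as noted in the proof of Lemma~\ref{e-ve}, the relevant norms $\|\bu_h\|_{r+1}$ and $\|\bu_{ht}\|_{r+1}$ have the same $t$-weights as $\|\bu_{htt}\|_{r-1}$, so it can be handled by the same argument with only notational changes.
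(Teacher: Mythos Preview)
Your overall structure matches the paper exactly: test \eqref{eebelin1i} (with $n$ replaced by $i$) against $\hbxi_i$, multiply by $ke^{2\alpha t_i}$, sum, drop the quadrature term by positivity, bound the right-hand side and absorb via Young, then multiply by $e^{-2\alpha t_n}$. The handling of the $-(e^{2\alpha k}-1)\|\tilde{\hbxi}_i\|^2$ term via Poincar\'e, just as in \eqref{pree06}, is also what the paper does.

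Where your bookkeeping goes wrong is the claimed $k$-power for the inner sum. One cannot simply ``invoke Lemma~\ref{e-ve} with an extra $k$'': the structure here is $k\sum_{j=1}^i E^j(\bu_h)(\hbxi_i)$ with $\hbxi_i$ \emph{fixed} in $j$, not a sum of the form in \eqref{eve1}. Writing out $E^j$ gives
\[
k\sum_{j=1}^i E^j(\bu_h)(\hbxi_i)\;\le\; \tfrac12\Big(\sum_{j=1}^i\int_{t_{j-1}}^{t_j}(s-t_{j-1})\|\bu_{hss}\|_{-1}\,ds\Big)\|\nabla\hbxi_i\|,
\]
and the paper bounds the bracket directly by splitting $j=1$ from $j\ge 2$ (exactly the mechanism you mention at the end), yielding $Kk(1+\log\tfrac{1}{k})^{1/2}$ --- so the inner sum is $O(k)$, not $O(k^{3/2})$. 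The final $k^2$ then comes from Young's inequality applied to the outer sum,
\[
k\sum_{i=1}^n e^{2\alpha t_i}\cdot Kk(1+\log\tfrac1k)^{1/2}\|\nabla\hbxi_i\|
\;\le\;\tfrac{\mu}{4}k\sum_{i=1}^n e^{2\alpha t_i}\|\nabla\hbxi_i\|^2
+Kk^2(1+\log\tfrac1k)\,k\sum_{i=1}^n e^{2\alpha t_i},
\]
with $k\sum_i e^{2\alpha t_i}\le Ce^{2\alpha t_n}$. Your ``$k^{3/2}$, then square to $k^3$, then lose a $k$ to $t_n$'' narrative does not correspond to any actual step; fix the accounting as above and the proof goes through exactly as in the paper. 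The quadrature-error term $\ve_a^j$ is handled identically.
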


\begin{proof}
Choose $\bphi_h=\hbxi_i$ in (\ref{eebelin1i}) for $n=i$, multiply by $k e^{2\alpha t_i}$ and 
then sum over $1\le i\le n$. We drop the third term on the left hand-side of the resulting 
inequality due to non-negativity.
\begin{align}\label{ieens01}
e^{2\alpha t_n}\|\hbxi_n\|^2+\mu k\sum_{i=1}^n e^{2\alpha t_i}\|\nabla\hbxi_i\|^2 \le 
k\sum_{i=1}^n e^{2\alpha t_i} k\sum_{j=1}^i \big(|E^j(\bu_h)(\hbxi_i)| 
+|\ve_a^j(\bu_h)(\hbxi_i)|\big).
\end{align}
From (\ref{R1be}), we find that
$$ k\sum_{j=1}^i |E^j(\bu_h)(\hbxi_i)| \le \frac{1}{2}\big(\sum_{j=1}^i \int_{t_{j-1}}^{t_j} (s-t_{j-1}) 
\|\bu_{hss}\|_{-1}ds \big)\|\nabla\hbxi_i\|. $$
Similar to the proof of Lemma \ref{e-ve} , we split the sum in $j=1$ and the rest to obtain
\begin{align}\label{ieens02}
k\sum_{j=1}^i |E^j(\bu_h)(\hbxi_i)| \le Kk(1+\frac{1}{2}\log\frac{1}{k}) e^{-\alpha k} 
\|\nabla\hbxi_i\|.
\end{align}
Therefore,
\begin{align}\label{ieens03}
k\sum_{i=1}^n e^{2\alpha t_i} k\sum_{j=1}^i |E^j(\bu_h)(\hbxi_i)| \le \frac{\mu}{4} k
 \sum_{i=1}^n e^{2\alpha t_i}\|\nabla\hbxi_i\|^2+Kk^2(1+\log\frac{1}{k}) e^{2\alpha t_n}.
\end{align}
Similarly
\begin{align}\label{ieens04b}
k\sum_{i=1}^n e^{2\alpha t_i} k\sum_{j=1}^i |\ve^j_a(\bu_h)(\hbxi_i)| \le
 \frac{\mu}{4} k \sum_{i =1}^n e^{2\alpha t_i}\|\nabla\hbxi_i\|^2+Kk^2
 (1+\log\frac{1}{k}) e^{2\alpha t_n}.
\end{align}
Incorporate (\ref{ieens03})-(\ref{ieens04b}) in (\ref{ieens01}) to complete 
the rest of the proof.
\end{proof}
\noindent
We are now in a position to estimate $L^{\infty}(\bL^2)$-norm of $\bxi_n$.
\begin{theorem}\label{l2eebxi}
 Under the assumptions of Lemma \ref{pree}, the following holds:
\begin{equation}\label{l2eebxi1}
t_n \|\bxi_n\|^2 +e^{-2\alpha t_n} k\sum_{i=1}^n \sigma_i\|\nabla\bxi_i\|^2 \le 
Kk^2(1+\log \frac{1}{k}),
\end{equation}
where $\sigma_i=t_i e^{2\alpha t_i}$.
\end{theorem}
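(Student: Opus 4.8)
The plan is to derive the weighted $L^2$-bound on $\bxi_n$ by testing the error equation \eqref{eebelin} with the weight $\sigma_i = t_i e^{2\alpha t_i}$, exactly in the spirit of the standard ``multiply by $t$'' device used for nonsmooth data in parabolic problems, but with the extra care forced by the quadrature/memory term. First I would put $\bphi_h = \sigma_i \bxi_i$ in \eqref{eebelin} (written at $n=i$), use the elementary inequality $\pt(\sigma_i\|\bxi_i\|^2) \le \sigma_i\pt\|\bxi_i\|^2 + (\pt\sigma_i)\|\bxi_{i-1}\|^2$ together with $(\pt\bxi_i,\bxi_i)\ge \tfrac12\pt\|\bxi_i\|^2$, and observe that $\pt\sigma_i \le C e^{2\alpha t_i}$ uniformly (since $\sigma$ has bounded first difference after dividing by $e^{2\alpha t_i}$; more precisely $\pt(t_i e^{2\alpha t_i})\le (1 + 2\alpha t_i e^{2\alpha k})e^{2\alpha t_{i-1}} \le C e^{2\alpha t_i}$). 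Summing over $1\le i\le n$ and multiplying by $e^{-2\alpha t_n}$ yields
\begin{align*}
t_n\|\bxi_n\|^2 + 2\mu e^{-2\alpha t_n}k\sum_{i=1}^n \sigma_i\|\nabla\bxi_i\|^2 \le C\, e^{-2\alpha t_n}k\sum_{i=1}^n e^{2\alpha t_i}\|\bxi_i\|^2 + \text{(memory term)} + \text{(quadrature/truncation terms)}.
\end{align*}
The first term on the right is controlled directly by the negative-norm-type estimate of Lemma \ref{neg}, which gives $e^{-2\alpha t_n}k\sum_i e^{2\alpha t_i}\|\bxi_i\|^2 \le Kk^2$, and this is why the preliminary Lemmas \ref{pree}, \ref{neg}, \ref{ieens} had to come first.

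The genuinely delicate piece, and the step I expect to be the main obstacle, is the memory term $k\sum_{i=1}^n \sigma_i\, a(q_r^i(\bxi),\bxi_i)$, which is \emph{not} sign-definite once the weight $\sigma_i$ is inserted (the positivity \eqref{rrp} is lost because $\sigma_i$ depends on $i$ but not on $j$). The idea is to write $\sigma_i = e^{2\alpha t_i}t_i$ and pull $e^{\alpha t_i}$ into each factor, reducing to $\gamma k^2\sum_i\sum_{j\le i} e^{-(\delta-\alpha)(t_i-t_j)} t_i^{1/2}t_j^{1/2}\, a\bigl(t_j^{1/2}e^{\alpha t_j}\nabla\bxi_j,\, t_i^{1/2}e^{\alpha t_i}\nabla\bxi_i\bigr)$ — no, more cleanly: bound $a(q_r^i(\bxi),\bxi_i)$ by $\|q_r^i(\nabla\bxi)\|\,\|\nabla\bxi_i\|$, apply Young, and then handle $k\sum_i \sigma_i\|q_r^i(\nabla\bxi)\|^2$ by the discrete convolution estimate of Lemma 2.1 (or the change-of-order-of-summation manipulation used in \eqref{pree08}--\eqref{pree09}). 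The exponential factor $e^{-(\delta-\alpha)(t_i-t_j)}$ provides the geometric decay needed to absorb the ratio $\sigma_i/\sigma_j \le C\, t_i/t_j \cdot e^{2\alpha(t_i-t_j)}$, and since $\alpha<\delta$ the product $e^{-(\delta-\alpha)(t_i-t_j)}(t_i/t_j)$ is summable; what remains is then bounded by $Ck\sum_j \sigma_j\|\nabla\bxi_j\|^2$, which can be absorbed into the left-hand side for $k$ small, up to a lower-order piece handled by \eqref{pree1}. One has to be a little careful near $i=1$ (where $t_j^{-1}$ blows up), splitting off the first term exactly as in the proof of Lemma \ref{e-ve}, which costs the logarithmic factor.

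For the quadrature and time-truncation contributions $-2k\sum_i \sigma_i\bigl(E^i(\bu_h)(\bxi_i)+\ve_a^i(\bu_h)(\bxi_i)\bigr)$, I would invoke Lemma \ref{e-ve} directly with the weight $\tau_i^s e^{2\alpha(t_i - t_n)}$ in the form used there — except that here the weight is $\sigma_i e^{-2\alpha t_n} = \tau_i\, e^{2\alpha(t_i-t_n)}$ on $t_i\le 1$ and is comparable to $e^{2\alpha(t_i-t_n)}$ on $t_i\ge 1$ — so the estimate splits into the range $t_i\le 1$ (apply Lemma \ref{e-ve} with $s=1$, $r=0$, giving a factor $k\,(1+\log\frac1k)^{1/2}$ against $(k\sum_i \sigma_i e^{-2\alpha t_n}\|\bxi_i\|_1^2)^{1/2}$, i.e.\ against the left-hand-side quantity) and the range $t_i\ge 1$ (apply it with $s=0$, $r=0$, where now $\sigma_i e^{-2\alpha t_n}\simeq e^{2\alpha(t_i-t_n)}$ and $\|\bxi_i\|_1$ already absorbs $\sigma_i$ since $\sigma_i\simeq e^{2\alpha t_i}$ there). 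In each case a Cauchy--Schwarz/Young step puts half of the weighted $\|\nabla\bxi_i\|^2$ sum back on the left and leaves $Kk^2(1+\log\frac1k)$ on the right. Collecting the absorbed terms (choosing $k$ smaller than some $k_{00}$ as in the Remark) and the leftovers controlled by Lemmas \ref{neg}, \ref{pree} and \ref{ieens} gives \eqref{l2eebxi1}. The only real subtlety throughout is bookkeeping the weight across the memory sum; everything else is a direct adaptation of the unweighted Lemma \ref{pree} argument.
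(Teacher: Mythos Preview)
Your overall framework --- test \eqref{eebelin} with $\sigma_i\bxi_i$, sum, control the $\pt\sigma_i$ contribution $Ck\sum_i e^{2\alpha t_i}\|\bxi_i\|^2$ by Lemma~\ref{neg}, and handle the truncation terms $E^i,\ve_a^i$ through Lemma~\ref{e-ve} with $s=1,r=0$ --- is exactly the paper's strategy, and those pieces are fine.

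The genuine gap is in the memory term, which you correctly flag as the main obstacle but then mis-handle. Your route (Young plus a discrete convolution / change-of-order bound in the style of \eqref{pree08}--\eqref{pree09}) produces, after swapping the order of summation,
\[
k\sum_i\sigma_i\|q_r^i(\nabla\bxi)\|^2 \;\le\; C\,k\sum_j\sigma_j\|\nabla\bxi_j\|^2 \;+\; C\,k\sum_j e^{2\alpha t_j}\|\nabla\bxi_j\|^2,
\]
and neither piece closes. The first carries a constant $C=C(\gamma,\delta,\alpha)$ that is \emph{independent of $k$}; it therefore cannot be ``absorbed into the left-hand side for $k$ small'' --- the left carries the fixed coefficient $2\mu$ and the right carries $C/\ve$ from Young, with no smallness available. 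The second term, bounded via \eqref{pree1}, gives only $Kk(1+\log\tfrac1k)$ after multiplying by $e^{-2\alpha t_n}$, which is one full power of $k$ short of the target $Kk^2(1+\log\tfrac1k)$. Splitting off small $j$ does not repair this: the $t_j^{-1}$ blow-up is exactly what generates the unweighted sum $k\sum_j e^{2\alpha t_j}\|\nabla\bxi_j\|^2$, and that sum is only $O(k)$ by \eqref{pree1}.

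What the paper does instead --- and this is the missing idea --- is to apply the discrete summation-by-parts identity \eqref{sumbp} to rewrite $q_r^i(\bxi)$ in terms of the ``hat'' quantity $\hbxi_j=k\sum_{l\le j}\bxi_l$, after which the weighted memory sum is bounded (see \eqref{l2eebxi02}--\eqref{l2eebxi2}) by
\[
\ve\,k\sum_i\sigma_i\|\nabla\bxi_i\|^2 \;+\; K\,k\sum_i e^{2\alpha t_i}\|\nabla\hbxi_i\|^2.
\]
The first term is now absorbable (small $\ve$), and the second is controlled by Lemma~\ref{ieens}, which already delivers the optimal order $Kk^2(1+\log\tfrac1k)\,e^{2\alpha t_n}$. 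You list Lemma~\ref{ieens} among the preliminaries but never actually invoke it in the argument; it is precisely the tool that replaces your failed absorption step.
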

\begin{proof}
 From (\ref{eebelin}) with $n=i$ and $\bphi_h=\sigma_i\bxi_i$, we obtain
\begin{align}\label{teebelin} 
\pt (\sigma_i\|\bxi_i\|^2)-e^{2\alpha k}\Big\{\|\tbxi_{i-1}\|^2+(\frac{1-e^{-2\alpha k}
 }{k})\sigma_{i-1}\|\bxi\|_{i-1}^2\Big\}+2\mu \sigma_i\|\nabla\bxi_i\|^2 \nonumber 
\\
+2\sigma_i a(q_{r}^i(\bxi), \bxi_i) \le -2E^i(\bu_h)(\sigma_i\bxi_i)- 2\ve_{a}^i
(\bu_h) (\sigma_i \bxi_i).
\end{align}
We multiply (\ref{teebelin}) by $k$ and sum it over $1\le i\le n$ to find that
\begin{align}\label{l2eebxi01}
\sigma_n\|\bxi_n\|^2 + (2\mu-\frac{e^{2\alpha k}-1}{k\lambda_1}) k\sum_{i=1}^n 
\sigma_i\|\nabla\bxi_i\|^2\le e^{2\alpha k} k\sum_{i=2}^{n-1}\|\tbxi_i\|^2 
\nonumber \\
-2k\sum_{i=1}^n \sigma_i a(q_{r}^i(\bxi), \bxi_i)-2 k\sum_{i=1}^n E^i(\bu_h)(\sigma_i\bxi_i) 
-2k\sum_{i=1}^n \ve_{a}^i(\bu_h) (\sigma_i \bxi_i).
\end{align}
As earlier, using (\ref{sumbp}), we note that
\begin{align}\label{l2eebxi02}
2k\sum_{i=1}^n \sigma_i a(q_{r}^i(\bxi), \bxi_i)= 2k\sum_{i=1}^n \gamma a(\hbxi_i, 
\sigma_i\bxi_i)-2k\sum_{i=2}^n k\sum_{j=1}^{i-1} \pt \beta(t_i-t_j) a(\hbxi_j, 
\sigma_i\bxi_i).
\end{align}
The first term can be handled as follows (for some $\ve >0$):
\begin{align}\label{l2eebxi02a}
2k\sum_{i=1}^n \gamma a(\hbxi_i, \sigma_i\bxi_i) \le \ve k\sum_{i=1}^n \sigma_i 
\|\nabla\bxi_i\|^2+K(\ve, \mu,\gamma) k\sum_{i=1}^n e^{2\alpha t_i} 
\|\nabla\hbxi_i\|^2.
\end{align}
For the second term, using similar technique as in (\ref{pree09}), we observe that
\begin{align}\label{l2eebxi02b}
& 2k\sum_{i=2}^n k\sum_{j=1}^{i-1} \pt \beta(t_i-t_j) a(\hbxi_j, \sigma_i\bxi_i)
\le \ve k\sum_{i=1}^n \sigma_i \|\nabla\bxi_i\|^2 \\
+Kk\sum_{i=2}^n &\Big(k\sum_{j=1}^{i-1} e^{-\delta(t_i-t_j)} \big(\frac{e^{\delta k-1}}{k}\big) e^{\alpha t_i}\|\nabla\hbxi_j\|\Big)^2
\le \ve k\sum_{i=1}^n \sigma_i \|\nabla\bxi_i\|^2+Kk\sum_{i=1}^n e^{2\alpha t_i}\|\nabla\hbxi_j\|^2. \nonumber
\end{align}
Combining (\ref{l2eebxi02})-(\ref{l2eebxi02b}), we find that
\begin{align}\label{l2eebxi2}
2k\sum_{i=1}^n \sigma_i a(q_{r}^i(\bxi), \bxi_i) \le \ve k\sum_{i=1}^n \sigma_i 
\|\nabla\bxi_i\|^2+K k\sum_{i=1}^n e^{2\alpha t_i}\|\nabla\hbxi_i\|^2.
\end{align}
From Lemma \ref{e-ve}, we obtain for $r=0$ and $s=1$
\begin{equation}\label{l2eebxi4}
2k\sum_{i=1}^n \big\{E^i(\bu_h)(\sigma_i\bxi_i)+\ve_a^i(\bu_h)(\sigma_i\bxi_i)\big\}
\le \ve k\sum_{i=1}^n \sigma_i \|\nabla\bxi_i\|^2+Kk^2 (1+\log\frac{1}{k}) e^{2\alpha t_n}.
\end{equation}
Incorporate the estimates (\ref{l2eebxi2})-(\ref{l2eebxi4}) in 
(\ref{l2eebxi01}) and choose $\ve=\mu/2$ to conclude
\begin{align*}
\sigma_n\|\bxi_n\|^2 + (\mu-\frac{e^{2\alpha k}-1}{k\lambda_1}) k\sum_{i=1}^n 
\sigma_i\|\nabla\bxi_i\|^2\le Kk^2 (1+\log\frac{1}{k}) e^{2\alpha t_n}+
K k\sum_{i=1}^n e^{2\alpha t_i}\|\nabla\hbxi_i\|^2.
\end{align*}
We multiply by $e^{2\alpha t_i}$ and use Lemma \ref{ieens} to complete the rest of the proof.
\end{proof}
\noindent
We now obtain estimates of $\bta$ below. Hence forward,  $K_T$ means $KT e^{KT}.$
\begin{lemma}\label{estbta}
Assume (${\bf A1}$), (${\bf A2}$) and a spatial discretization scheme that satisfies
conditions (${\bf B1}$), (${\bf B2}$) and (${\bf B4}$). Further, assume that $\bU^n$ and 
$\bV^n$ satisfy (\ref{fdbej}) and (\ref{fdbejv}), respectively. Then, for some positive constant $K,$ there holds
\begin{eqnarray}
\|\bta_n\|^2+e^{-2\alpha t_n} k\sum_{i=1}^n e^{2\alpha t_i}\|\bta_i\|^2 \le K_{t_n}k(1+\log\frac{1}{k}), \label{estbta1} \\
\|\bta_n\|_1^2+e^{-2\alpha t_n} k\sum_{i=1}^n e^{2\alpha t_i}\|\bta_i\|_1^2 \le K_{t_n}. \label{estbta2}
\end{eqnarray}
\end{lemma}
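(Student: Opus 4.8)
The plan is to work directly from the error equation (\ref{eebenl}) for $\bta_n$, splitting the right-hand nonlinear term $\Lambda^n_h$ according to (\ref{dLbe}) as $-b(\bu_h^n,\e_n,\bphi_h)-b(\e_n,\bu_h^n,\bphi_h)-b(\e_n,\e_n,\bphi_h)$ and recalling $\e_n=\bta_n-\bxi_n$. First I would set $\bphi_h=e^{2\alpha t_i}\bta_i$ in (\ref{eebenl}) written at level $n=i$, use the standard inequality $(\pt\bta_i,\bta_i)\ge\frac12\pt\|\bta_i\|^2$ and the positivity (\ref{rrp}) of the right rectangle rule to drop the quadrature term after summation, exactly as in the proof of Lemma \ref{pree}. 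After multiplying by $k$ and summing $1\le i\le n$, this gives
\begin{align*}
\|\tbta_n\|^2+\Big(\mu-\tfrac{e^{2\alpha k}-1}{k\lambda_1}\Big)k\sum_{i=1}^n\|\nabla\tbta_i\|^2
\le 2k\sum_{i=1}^n e^{2\alpha t_i}\,\Lambda^i_h(\bta_i),
\end{align*}
with $\bta_0=0$. The work is entirely in bounding the three pieces of $\Lambda$. For the bilinear contributions $b(\bu_h^i,\bta_i,\bta_i)=0$ by antisymmetry, so the only surviving linear-in-$\bta$ term is $b(\bta_i,\bu_h^i,\bta_i)$; using Lemma \ref{nonlin} (third estimate) and the uniform bound $\|\nabla\bu_h^i\|\le K$ from the continuous-case analogue of Lemma \ref{dth2}/(\ref{dth11}), this is controlled by $\ve\|\nabla\bta_i\|^2+K\|\bta_i\|^2$. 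The terms involving $\bxi_i$, namely $b(\bu_h^i,\bxi_i,\bta_i)$, $b(\bxi_i,\bu_h^i,\bta_i)$, $b(\bta_i,\bxi_i,\bta_i)$ etc., are bounded using Lemma \ref{nonlin} together with Lemma \ref{pree} and Theorem \ref{l2eebxi} (which supply $\|\bxi_i\|$, $\|\nabla\bxi_i\|$ and the weighted sums of $\|\nabla\bxi_i\|^2$ at the rate $k^2(1+\log\frac1k)$ after the $\sigma_i$-weight is removed — here one uses $t_n\|\bxi_n\|^2\le Kk^2(1+\log\frac1k)$ only where a clean $L^2$ bound on $\bxi$ suffices, and the un-weighted $\|\bxi_n\|^2\le Kk(1+\log\frac1k)$ otherwise); the quadratic term $b(\bxi_i,\bxi_i,\bta_i)$ is of higher order and harmless.

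Collecting these, and using $\mu-(e^{2\alpha k}-1)/(k\lambda_1)>0$ for $0<k<k_{00}$, I would arrive at a discrete Gronwall inequality of the form
\begin{align*}
\|\tbta_n\|^2+c\,k\sum_{i=1}^n\|\nabla\tbta_i\|^2\le K\,k\sum_{i=1}^n e^{2\alpha t_i}\|\bta_i\|^2+K\,e^{2\alpha t_n}k(1+\log\tfrac1k),
\end{align*}
where the first term on the right comes from the $b(\bta_i,\bu_h^i,\bta_i)$ contribution and any $\bxi$–$\bta$ cross terms after absorbing gradients. Applying the discrete Gronwall lemma in the variable $\|\tbta_i\|^2$ produces the factor $e^{Kt_n}$, hence $K_{t_n}=KT e^{KT}$, and multiplying through by $e^{-2\alpha t_n}$ yields (\ref{estbta1}). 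For (\ref{estbta2}) I would repeat the argument with the test function $\bphi_h=-\sigma_i\td_h\bta_i$ replaced by $\bphi_h=-e^{2\alpha t_i}\td_h\bta_i$ (no extra time weight is needed since $\bta_0=0$ already gives a smooth start, unlike the $\bxi$ case): this time the quadrature term, after summation by parts as in (\ref{l2eebxi02})–(\ref{l2eebxi2}), contributes $\ve\,k\sum e^{2\alpha t_i}\|\td_h\bta_i\|^2+K\,k\sum e^{2\alpha t_i}\|\nabla\hat{\bta}_i\|^2$, which forces me to also record an auxiliary estimate for $\hat{\bta}_i$ (the ``hat'' of $\bta$) in parallel — or, more economically, to bound $\|\nabla\hat{\bta}_i\|$ directly by $k\sum_{j\le i}\|\nabla\bta_j\|$ and close via (\ref{estbta1}) together with the $\|\nabla\bta_i\|^2$ sum just obtained. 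The nonlinear terms are handled by Lemma \ref{nonlin} (second and fourth estimates) using $\|\td_h\bu_h^i\|$, which is only bounded away from $t=0$; but since $\bta$ carries no singularity, the factor $\tau_i$ from (\ref{h2-bound}) and (\ref{dth11}) does not cause a loss, and the $\bxi$-terms are again absorbed using Lemma \ref{pree} (second estimate) and Theorem \ref{l2eebxi}.

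The main obstacle will be the interaction of the memory (quadrature) term with the time weight and the discrete Gronwall step: because the convolution couples all earlier levels, one cannot simply localize, and the summation-by-parts manipulation (\ref{sumbp}) together with the change-of-order-of-summation trick used in (\ref{pree08})–(\ref{pree09}) and (\ref{l2eebxi02b}) must be pushed through once more, now carrying the exponential growth factor $e^{Kt_n}$; care is needed to ensure that the $\|\nabla\hat{\bta}_i\|$ (or $\|\td_h\hat{\bta}_i\|$) contributions are genuinely of order $k^2(1+\log\frac1k)$ (resp.\ $K_{t_n}$) and not worse, which is where the sharp $\hat{\bxi}$ bound from Lemma \ref{ieens} feeds in. Everything else is a routine repetition of the energy arguments already used for $\bxi$, with the crucial difference that $\bta_0=0$ removes the need to multiply by $t$, so no genuinely new non-smooth-data difficulty arises — only the exponential-in-time constant, which is exactly the price recorded in the statement.
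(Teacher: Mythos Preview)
Your approach is essentially the paper's: test (\ref{eebenl}) with $e^{2\alpha t_i}\bta_i$, drop the positive quadrature sum, split $\Lambda^i_h(\bta_i)$ via $\e_i=\bta_i-\bxi_i$ and the antisymmetry $b(\cdot,\bta_i,\bta_i)=0$, bound the pieces with Lemma~\ref{nonlin} and Lemma~\ref{pree}, and close by discrete Gronwall; the paper does exactly this and then says (\ref{estbta2}) ``follows similarly.'' One simplification you are missing for (\ref{estbta2}): when you test with $-\td_h\bta_i$, the quadrature term is
\[
a\big(q_r^i(\bta),-\td_h\bta_i\big)=\big(q_r^i(\td_h\bta),\td_h\bta_i\big),
\]
which after the $e^{2\alpha t_i}$ weighting is again a positive-kernel double sum (same reduction as in Lemma~\ref{stb} with $\bpsi_i=e^{\alpha t_i}\td_h\bta_i$ and modified kernel $\gamma e^{-(\delta-\alpha)t}$) and can simply be dropped---no summation by parts and no $\hat\bta$ estimate are needed here. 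Also, Lemma~\ref{pree} alone suffices for the $\bxi$-inputs; you do not need Theorem~\ref{l2eebxi}.
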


\begin{proof}
We shall only prove the first estimate as the second one will follow similarly.
For $n=i,$ we put $\bphi_h=\bta_i$ in (\ref{eebenl}), multiply by $ke^{2\alpha t_i}$
and sum over $1\le i\le n\le N$ to obtain as in (\ref{pree02})
\begin{align}\label{estbta01}
\|\tbta_n\|^2 +2\mu k\sum_{i=1}^n \|\nabla\tbta_i\|^2\le k\sum_{i=1}^{n-1} \frac{(e^{2\alpha k}-1)}{k}\|\tbta_i\|^2+2k\sum_{i=1}^n e^{2\alpha t_i} \Lambda_h^i(\bta_i).
\end{align}
We recall from (\ref{dLbe}) that
\begin{align}\label{estbta02}
\Lambda^i_h(\bta_i)= -b(\bu_h^i,\bxi_i,\bta_i)-b(\e_i,\bu_h^i,\bta_i)
-b(\e_i,\bxi_i,\bta_i).
\end{align}
Using (\ref{nonlin1}) and Lemma \ref{pree}, we obtain the following estimates:
\begin{eqnarray}
b(\e_i,\bxi_i,\bta_i)& \le & \|\bxi_i\|^{1/2}\|\nabla\bxi_i\|^{3/2}\|\nabla\bta_i\| +\|\nabla\bxi_i\|\|\bta_i\|\|\nabla\bta_i\| \nonumber \\
& \le & \ve \|\nabla\bta_i\|^2+K\|\bta_i\|^2+Kk^{1/2}(1+\log\frac{1}{k})^{1/2}
\|\nabla\bxi_i\|^2 \label{estbta02a} \\
b(\bu_h^i,\bxi_i,\bta_i) & \le & \ve \|\nabla\bta_i\|^2+K\|\nabla\bxi_i\|^2
 \label{estbta02b} \\
b(\e_i,\bu_h^i,\bta_i) & \le & \ve \|\nabla\bta_i\|^2+K\big(\|\nabla\bxi_i\|^2
+\|\bta_i\|^2\big). \label{estbta02c}
\end{eqnarray}
Incorporate (\ref{estbta02a})-(\ref{estbta02c}) in (\ref{estbta02}) and then in
(\ref{estbta01}). Choose $\ve= \mu/6$ and once again use Lemma \ref{pree}. Finally, use discrete Gronwall's Lemma to complete the rest of the proof.
\end{proof}

\begin{remark}
Combining Lemmas \ref{pree} and \ref{estbta}, we note that
\begin{eqnarray}
\|\e_n\|^2+e^{-2\alpha t_n} k\sum_{i=1}^n e^{2\alpha t_i}\|\e_i\|^2 \le K_{t_n}k(1+\log\frac{1}{k}), \label{err1} \\
\|\e_n\|_1^2+e^{-2\alpha t_n} k\sum_{i=1}^n e^{2\alpha t_i}\|\e_i\|_1^2 \le K_{t_n}. \label{err2}
\end{eqnarray}
Therefore, we obtain suboptimal order of convergence for $\|\e_n\|$.
\end{remark}

\noindent
Below, we shall prove optimal estimate of $\|\e_n\|$ with help of a series of Lemmas.
\begin{lemma}\label{negbta} 
Under the assumptions of Lemma \ref{estbta}, the following holds:
\begin{align}\label{negbta1}
\|\bta_n\|_{-1}^2+e^{-2\alpha t_n} k\sum_{i=1}^n e^{2\alpha t_i} \|\bta_i\|^2
\le K_{t_n}k^2.
\end{align}
\end{lemma}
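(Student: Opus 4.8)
The plan is to run a duality argument for the nonlinear error component $\bta_n$, patterned on the proof of Lemma~\ref{neg}. I take as dual data $\bW_n=(-\td_h)^{-1}\bta_n$ and $\g_i=\bta_i$ for $1\le i\le n$, so that the right-hand side of the backward problem (\ref{bwprob}) equals $e^{2\alpha t_i}(\bphi_h,\bta_i)$. Testing (\ref{bwprob}) with $\bphi_h=\bta_i$, rewriting $(\bta_i,\pt\bW_i)=\pt(\bta_i,\bW_i)-(\pt\bta_i,\bW_{i-1})$ with $\bW_{i-1}=\bW_i-k\pt\bW_i$ and eliminating $(\pt\bta_i,\bW_i)$ through the error equation (\ref{eebenl}), then multiplying by $k$ and summing over $1\le i\le n$, the two ``memory'' double sums cancel after a change of order of summation, exactly as in Lemma~\ref{neg}. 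Since $\bta_0=\bU^0-\bV^0=0$, this produces the identity
\[
\|\bta_n\|_{-1}^2+k\sum_{i=1}^n e^{2\alpha t_i}\|\bta_i\|^2
=k\sum_{i=1}^n\Big(k\,(\pt\bta_i,\pt\bW_i)-\Lambda_h^i(\bW_i)\Big),
\]
where $\|\bta_n\|_{-1}^2:=(\bta_n,(-\td_h)^{-1}\bta_n)=\|\nabla\bW_n\|^2$. It now suffices to bound the right-hand side by half of the left-hand side plus $K_{t_n}k^2e^{2\alpha t_n}$; multiplying through by $e^{-2\alpha t_n}$ then gives (\ref{negbta1}).

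The term $k\sum_{i=1}^n k(\pt\bta_i,\pt\bW_i)$ is handled as in Lemma~\ref{neg}: by Cauchy--Schwarz it is at most $k\big(k\sum_i e^{2\alpha t_i}\|\pt\bta_i\|^2\big)^{1/2}\big(k\sum_i e^{-2\alpha t_i}\|\pt\bW_i\|^2\big)^{1/2}$, and Lemma~\ref{bwest} with $r=1$ shows that the second factor is $\le K\big(\|\bta_n\|_{-1}^2+k\sum_i e^{2\alpha t_i}\|\bta_i\|^2\big)$, i.e.\ $K$ times the left-hand side of the identity; Young's inequality, the leading factor $k$, and an auxiliary bound $k\sum_i e^{2\alpha t_i}\|\pt\bta_i\|^2\le K_{t_n}e^{2\alpha t_n}$ then dispose of this term. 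The auxiliary bound I would obtain by testing (\ref{eebenl}) with $\pt\bta_i$, handling the memory term $a(q_r^i(\bta),\pt\bta_i)$ by the summation-by-parts manipulation already used in (\ref{pree08})--(\ref{pree09}) and the defect $\Lambda_h^i(\pt\bta_i)$ via Lemma~\ref{nonlin} together with Lemmas~\ref{unif.est0}, \ref{unif.est1}, \ref{pree} and \ref{estbta}, then summing and invoking the discrete Gronwall lemma.

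The genuinely delicate part is $-k\sum_{i=1}^n\Lambda_h^i(\bW_i)$. Using (\ref{dLbe}) I write $-\Lambda_h^i(\bW_i)=b(\bu_h^i,\e_i,\bW_i)+b(\e_i,\bu_h^i,\bW_i)+b(\e_i,\e_i,\bW_i)$ and substitute $\e_i=\bta_i-\bxi_i$. Each resulting trilinear term is bounded by the various inequalities in Lemma~\ref{nonlin}, the point being to spend the derivative losses on the dual variable: $\bW_i$ is made to appear through its $\bH^2$-norm, which is harmless because $\|\bW_i\|_2\le C\|\td_h\bW_i\|=C\|\bta_i\|$ and $k\sum_i e^{-2\alpha t_i}\|\bW_i\|_2^2\le K\big(\|\bta_n\|_{-1}^2+k\sum_i e^{2\alpha t_i}\|\bta_i\|^2\big)$ by Lemma~\ref{bwest}; the $\bu_h^i$-factors are controlled by Lemmas~\ref{unif.est0}--\ref{unif.est1} and by (\ref{h2-bound}); and every $\bxi_i$-factor is estimated by the sharp, second-order bounds of Theorem~\ref{l2eebxi} and Lemma~\ref{ieens} rather than by the sub-optimal $\bL^2$-bound (\ref{err1}) for $\e$, so that all contributions carrying a $\bxi$-factor are of size $K_{t_n}k^2(1+\log\frac1k)$. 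In the terms built only from $\bta$-factors one uses, where convenient, $\bta_i=-\td_h\bW_i$, so that the ``rough'' slot is again carried by the dual variable; after further Cauchy--Schwarz and Young steps these are partly absorbed into the left-hand side and partly left in a form majorised by the accumulated left-hand side, at which point a discrete Gronwall inequality closes the estimate and accounts for the factor $K_{t_n}=KTe^{KT}$.

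The main obstacle, as indicated, is precisely this control of the nonlinear defect $\Lambda_h^i(\bW_i)$ at the negative-order level: there is very little regularity to work with under non-smooth data, so it is essential both to shift all derivative losses onto the smoother dual variable $\bW_i$ and to replace the only-$O\big(k(1+\log\frac1k)^{1/2}\big)$ control of $\e$ by the sharper $O(k)$ bounds available for its $\bxi$-component, the $\bta$-component being handled by absorption and the discrete Gronwall argument. Collecting all the contributions in the identity and multiplying by $e^{-2\alpha t_n}$ yields (\ref{negbta1}).
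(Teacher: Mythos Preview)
Your approach is genuinely different from the paper's. The paper does \emph{not} run the backward-problem duality of Lemma~\ref{neg} a second time; instead it tests (\ref{eebenl}) directly with $(-\td_h)^{-1}\bta_i$, multiplies by $ke^{2\alpha t_i}$ and sums. After observing that the memory contribution becomes $k\sum_i e^{2\alpha t_i}\big(q_r^i(\bta),\bta_i\big)$, which is again a positive-kernel double sum and can be dropped, one obtains
\[
\|\tbta_n\|_{-1}^2+2\mu k\sum_{i=1}^n\|\tbta_i\|^2
\le \sum_{i=1}^{n-1}(e^{2\alpha k}-1)\|\tbta_i\|_{-1}^2
+2k\sum_{i=1}^n e^{2\alpha t_i}\,\Lambda_h^i\big((-\td_h)^{-1}\bta_i\big).
\]
The trilinear term is handled by integrating by parts in space so as to take the gradient off $\e_i$, which yields $|\Lambda_h^i((-\td_h)^{-1}\bta_i)|\le K\|\e_i\|\,\|\bta_i\|_{-1}^{1/2}\|\bta_i\|^{1/2}$. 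Splitting $\|\e_i\|\le\|\bxi_i\|+\|\bta_i\|$, the $\bxi$-part is controlled by Lemma~\ref{neg} (a clean $Kk^2$, no logarithm), kickback absorbs the $\|\bta_i\|^2$-piece, and a discrete Gronwall on $\|\bta_i\|_{-1}^2$ produces the factor $K_{t_n}$. No estimate on $\pt\bta_i$ is ever needed.

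Your route can in principle be completed, but as written it has a gap. The auxiliary bound $k\sum_i e^{2\alpha t_i}\|\pt\bta_i\|^2\le K_{t_n}e^{2\alpha t_n}$ is asserted via the manipulation (\ref{pree08})--(\ref{pree09}); that manipulation, however, terminates in $k\sum_j\|\td_h\tbta_j\|^2$, an $H^2$-type quantity for $\bta$ that is nowhere established (Lemma~\ref{estbta} stops at $H^1$). You would first have to prove such a bound---e.g.\ by testing (\ref{eebenl}) with $-\td_h\bta_i$ and controlling the nonlinear term through the singular factors $\|\bu_h^i\|_2$, $\|\td_h\bU^i\|$ from (\ref{dth11}) and (\ref{h2-bound})---before the $\pt\bta$ estimate goes through. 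Also, for the $\bxi$-contributions in $\Lambda_h^i(\bW_i)$ you should invoke Lemma~\ref{neg} rather than Theorem~\ref{l2eebxi} or Lemma~\ref{ieens}: Lemma~\ref{neg} gives $k\sum_i e^{2\alpha t_i}\|\bxi_i\|^2\le Kk^2e^{2\alpha t_n}$ with no logarithm, which is exactly what is needed to reach $K_{t_n}k^2$ instead of $K_{t_n}k^2(1+\log\frac1k)$. The paper's direct argument sidesteps both issues.
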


\begin{proof}
Put $\bphi_h= e^{2\alpha t_i} (-\td_h)^{-1}\bta_i$ in (\ref{eebenl}) for $n=i$. Multiply the 
equation by $ke^{2\alpha ik}$ and sum over $1\le i\le n\le N$ to arrive at
\begin{align}\label{negbta01}
\|\tbta_n\|_{-1}^2+2\mu k &\sum_{i=1}^n  \|\tbta_i\|^2 \le \sum_{i=1}^{n-1} 
(e^{2\alpha k}-1)\|\tbta_i\|_{-1}^2+2k \sum_{i=1}^n e^{2\alpha t_i}
\Lambda_h^i (e^{2\alpha t_i} (-\td_h)^{-1}\bta_i ).
\end{align}
From (\ref{dLbe}), we find that
\begin{align}\label{negbta02}
 |2\Lambda_h^i((-\td_h)^{-1}\bta_i)| & \le |2b(\e_i,\bu_h^i,(-\td_h)^{-1}\bta_i) 
\nonumber \\
& +b(\bu_h^i,\e_i,(-\td_h)^{-1}\bta_i) -b(\e_i,\e_i,(-\td_h)^{-1}\bta_i)|.
\end{align}
For the first term on the right hand-side of (\ref{negbta02}), we use (\ref{nonlin1}) to find 
that
\begin{equation}\label{negbta03}
|2b(\e_i,\bu_h^i,-\td_h^{-1}\bta_i)| \le K\|\e_i\|\|\bu_h^i\|_1\|\bta_i\|_{-1}^{1/2}
\|\bta_i\|^{1/2}.
\end{equation}
Also,
\begin{align}\label{negbta04}
|2b(\bu_h^i,\e_i,-\td_h^{-1}\bta_i)|& \le |(\bu_h^i\cdot\nabla\e_i, -\td_h^{-1}\bta_i)|+
|(\bu_h^i\cdot\nabla(-\td_h^{-1}\bta_i),\e_i)| \nonumber \\
& \le |(\bu_h^i\cdot\nabla\e_i, -\td_h^{-1}\bta_i)|+K\|\bu_h^i\|_1 
\|\bta_i\|_{-1}^{1/2}\|\bta_i\|^{1/2}\|\e_i\|.
\end{align}
For $D_1=\frac{\partial}{\partial x_1}$, we note that
\begin{align}\label{negbta05}
(\bu_h^i\cdot\nabla\e_i,-\td_h^{-1}\bta_i) &=\sum_{l,j=1}^2 \int_{\Omega}\bu_{h,l}^i
D_l(\e_{i,j})(-\td_h^{-1})\bta_{i,j}dx \nonumber \\
&= -\sum_{l,j=1}^2 \int_{\Omega}\big\{D_l(\bu_{h,l}^i)\e_{i,j}(-\td_h^{-1})\bta_{i,j}+
\bu_{h,l}^i\e_{i,j}D_l((-\td_h^{-1})\bta_{i,j})\}dx. \nonumber \\
& \le K\|\bu_h^i\|_1\|\e_i\|\|\bta_i\|_{-1}^{1/2}\|\bta_i\|^{1/2}.
\end{align}
Finally, from (\ref{nonlin1}), we find that
\begin{align}\label{negbta06}
|2b(\e_i,\e_i,-\td_h^{-1}\e_i)| \le K\|\e_i\|\big(\|\e_i\|_1+\|\e_i\|^{1/2}
\|\e_i\|_1^{1/2}\big)\|\bta_i\|_{-1}^{1/2}\|\bta_i\|^{1/2}.
\end{align}
Now, combine (\ref{negbta02})-(\ref{negbta06}) and use the fact that
$$ \|\e_i\|_1 \le \|\bu_h^i\|_1+\|\bU^i\|_1 \le K$$
to observe that
\begin{align}\label{negbta07}
 |2\Lambda_h^i((-\td_h)^{-1}\bta_i)| & \le K\|\e_i\|\|\bta_i\|_{-1}^{1/2} 
\|\bta_i\|^{1/2} \nonumber \\
 & \le K\|\bxi_i\|\|\bta_i\|_{-1}^{1/2} \|\bta_i\|^{1/2}
 +K\|\bta_i\|_{-1}^{1/2}\|\bta_i\|^{3/2}.
\end{align}
Incorporate (\ref{negbta07}) in (\ref{negbta01}) and use kickback argument to obtain
\begin{align}\label{negbta08}
\|\tbta_n\|_{-1}^2+\mu k &\sum_{i=1}^n  \|\tbta_i\|^2 \le Kk\sum_{i=1}^n 
\|\tbta_i\|_{-1}^2+Kk \sum_{i=1}^n \|\tbxi\|^2.
\end{align}
Finally, use Lemma \ref{neg}, apply discrete Gronwall's lemma and multiply the resulting 
estimate by $e^{-2\alpha t_i}$ to complete the rest of the proof.
\end{proof}

\begin{remark}
From Lemmas \ref{neg} and \ref{negbta}, we have the following estimate
\begin{equation}\label{l2ee}
e^{-2\alpha t_n} k\sum_{i=1}^n e^{2\alpha t_i}\|\e_i\|^2 \le K_{t_n}k^2.
\end{equation}
\end{remark}

We need another estimate of $\bta$ similar to the one in Lemma \ref{ieens} and the proof 
will follow in a similar line. For that purpose, we multiply (\ref{eebenl}) by $k$ and sum over $1$ to $n$ and similar to (\ref{eebelin1i}), we obtain
\begin{align}\label{eebenli}
 (\pt\hbta_n,\bphi_h)+ \mu a(\hbta_n,\bphi_h)+ a(q^n_r(\hbta),\bphi_h) = k\sum_{i=1}^n 
\Lambda_h^i(\bphi_h).
\end{align}

\begin{lemma}\label{ibta}
Under the assumptions of Lemma \ref{estbta}, the following holds:
\begin{align}\label{ibta1}
\|\hbta_n\|^2+ e^{-2\alpha t_n} k\sum_{i=1}^n e^{2\alpha t_i} \|\nabla\hbta_i\|^2 \le  K_{t_n}k^2(1+\log\frac{1}{k}).
\end{align}
\end{lemma}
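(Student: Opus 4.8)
The plan is to run, for the residual equation~(\ref{eebenli}), the same weighted energy argument that proves Lemma~\ref{ieens}, the only genuinely new ingredient being the control of the nonlinear double sum. I would first put $\bphi_h=\hbta_m$ in~(\ref{eebenli}) written at time level $m$, multiply by $ke^{2\alpha t_m}$ and sum over $1\le m\le n$. Exactly as in~(\ref{ieens01}), the term $(\pt\hbta_m,\hbta_m)$ produces $e^{2\alpha t_n}\|\hbta_n\|^2$ together with $\sum_{m=1}^{n-1}(e^{2\alpha k}-1)\|e^{\alpha t_m}\hbta_m\|^2$, which --- using $\|\hbta_m\|^2\le\lambda_1^{-1}\|\nabla\hbta_m\|^2$ and the restriction $0<k<k_{00}$ --- is absorbed into $\mu k\sum_m e^{2\alpha t_m}\|\nabla\hbta_m\|^2$, while the quadrature term $a(q_r^m(\hbta),\hbta_m)$ is discarded since, after writing $e^{2\alpha t_m}\beta(t_m-t_j)=\gamma e^{-(\delta-\alpha)(t_m-t_j)}e^{\alpha t_j}e^{\alpha t_m}$, its double sum carries the positive-definite exponential kernel (cf.\ Lemma~\ref{stb} and~(\ref{rrp})). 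This leaves
\[
  e^{2\alpha t_n}\|\hbta_n\|^2+\mu\,k\sum_{m=1}^n e^{2\alpha t_m}\|\nabla\hbta_m\|^2
  \le 2k\sum_{m=1}^n e^{2\alpha t_m}\,k\sum_{j=1}^m\Lambda_h^j(\hbta_m),
\]
and the whole task is to estimate the right-hand side.

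For the nonlinear double sum I would first use the antisymmetry of $b$ to place $\hbta_m$ in the middle slot of each piece, $\Lambda_h^j(\hbta_m)=b(\bu_h^j,\hbta_m,\e_j)+b(\e_j,\hbta_m,\bu_h^j)+b(\e_j,\hbta_m,\e_j)$. For the quadratic piece, Lemma~\ref{nonlin} gives $|b(\e_j,\hbta_m,\e_j)|\le K\|\e_j\|\,\|\nabla\e_j\|\,\|\nabla\hbta_m\|$, and since $\|\nabla\e_j\|\le K_{t_j}$ by~(\ref{err2}), a Cauchy--Schwarz in $j$ combined with the \emph{optimal} $L^2$-in-time estimate~(\ref{l2ee}), $e^{-2\alpha t_m}k\sum_{j\le m}e^{2\alpha t_j}\|\e_j\|^2\le K_{t_m}k^2$, turns this into a term of order $K_{t_n}k^2$ plus a piece absorbed on the left. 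For the two pieces bilinear in $\e_j$, a term-by-term estimate is fatally lossy --- pointwise $\|\e_j\|$ is only $O(k^{1/2}(1+\log\frac1k)^{1/2})$ --- so the gain has to come from summing in $j$: using $k\sum_{j=1}^m\e_j=\hbta_m-\hbxi_m$ and the telescoping $\bu_h^j=\bu_h^m-k\sum_{l=j+1}^m\pt\bu_h^l$ (together with the same two identities at level $l-1$), the sum $k\sum_{j\le m}\bigl(b(\bu_h^j,\hbta_m,\e_j)+b(\e_j,\hbta_m,\bu_h^j)\bigr)$ collapses, after the $\hbta_m$--$\hbta_m$ interactions vanish by antisymmetry, to the groups $\mp b(\bu_h^m,\hbta_m,\hbxi_m)$, $\mp b(\hbxi_m,\hbta_m,\bu_h^m)$ and $k\sum_{l=2}^m b(\pt\bu_h^l,\hbta_m,\hbta_{l-1}-\hbxi_{l-1})+k\sum_{l=2}^m b(\hbta_{l-1}-\hbxi_{l-1},\hbta_m,\pt\bu_h^l)$. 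The $\hbxi$-interactions I would bound with those versions of~(\ref{nonlin1}) that keep only $\|\hbxi\|$ and $\|\bu_h\|^{1/2}\|\Delta_h\bu_h\|^{1/2}$, invoking $\|\hbxi_m\|\le Kk(1+\log\frac1k)^{1/2}$ from Lemma~\ref{ieens}, the a priori bounds $(\tau^*)\|\bu_h\|_2^2\le K$ and $(\tau^*)\|\bu_{ht}\|_r^2\le K$, $r\in\{0,1\}$, of Lemma~\ref{dth2} (so that $\|\pt\bu_h^l\|_1\le K\tau_{l-1}^{-1/2}$), and the integrability of the ensuing $\tau_l^{-1/2}$-type weights; the $\hbta_{l-1}$-interactions are retained for the Gronwall step.

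Assembling the pieces, applying Young's inequality to absorb every $\|\nabla\hbta_m\|$-factor into $\mu k\sum_m e^{2\alpha t_m}\|\nabla\hbta_m\|^2$, multiplying by $e^{-2\alpha t_n}$, and feeding the surviving sums into the discrete Gronwall lemma yields~(\ref{ibta1}), the exponential-in-time constant $K_{t_n}$ being inherited from~(\ref{err2}), (\ref{l2ee}) and the Gronwall step. The hard part --- and the only real departure from Lemma~\ref{ieens} --- is the nonlinear double sum: one cannot afford pointwise control of $\e_j$, and must instead carry $\|\e_j\|$ to its first power in the quadratic term (closing with the summed bound~(\ref{l2ee})) and, in the bilinear terms, exploit the cancellation produced by $k\sum_j\e_j=\hbta_m-\hbxi_m$ and the telescoping of $\bu_h^j$ about $\bu_h^m$, which both annihilates the dangerous $\hbta$--$\hbta$ quadratic interaction and replaces the leftover error by the genuinely $O(k)$ hatted quantity $\hbxi$. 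The most delicate bookkeeping is in the summed remainder $k\sum_l b(\pt\bu_h^l,\hbta_m,\hbta_{l-1}-\hbxi_{l-1})$, where the exponential weights must be distributed so that the $\hbta_{l-1}$-part stays a bona fide Gronwall term while the $\hbxi_{l-1}$-part remains of the required order; and throughout one has to select, term by term, the version of~(\ref{nonlin1}) that never leaves behind an uncontrolled norm such as $\|\Delta_h\hbta_m\|$.
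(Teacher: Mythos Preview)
Your setup and your handling of the quadratic piece $b(\e_j,\hbta_m,\e_j)$ match the paper. The divergence is in the two bilinear terms, and there you are working much harder than necessary. The very mechanism you already exploit for the quadratic piece --- Cauchy--Schwarz in $j$ followed by the \emph{summed} optimal bound~(\ref{l2ee}) --- closes the bilinear terms directly. The paper simply estimates, using Lemma~\ref{nonlin} (and, for $b(\bu_h^j,\e_j,\hbta_i)$, one spatial integration by parts to move the gradient off~$\e_j$),
\[
  k\sum_{j=1}^{i}\bigl|b(\e_j,\bu_h^j,\hbta_i)\bigr|
  +k\sum_{j=1}^{i}\bigl|b(\bu_h^j,\e_j,\hbta_i)\bigr|
  \;\le\; K\Bigl(k\sum_{j=1}^{i}\|\tilde\e_j\|^2\Bigr)^{1/2}\|\nabla\hbta_i\|,
\]
and then~(\ref{l2ee}) gives $\bigl(k\sum_{j}\|\tilde\e_j\|^2\bigr)^{1/2}=O(K_{t_i}\,k)$, so after Young one lands on $K_{t_n}k^2+\ve\,k\sum_i e^{2\alpha t_i}\|\nabla\hbta_i\|^2$. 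No telescoping of $\bu_h^j$, no $\pt\bu_h^l$ remainders, and no Gronwall step appear in the paper's proof; the bound~(\ref{ibta1}) is obtained outright.

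Your alternative route via $k\sum_j\e_j=\hbta_m-\hbxi_m$ and $\bu_h^j=\bu_h^m-k\sum_{l>j}\pt\bu_h^l$ is not obviously wrong, but it is fragile, and the sketch contains a genuine error. You quote $(\tau^*)\|\bu_{ht}\|_r^2\le K$ for $r\in\{0,1\}$ and conclude $\|\pt\bu_h^l\|_1\le K\tau_{l-1}^{-1/2}$; Lemma~\ref{dth2} actually states $(\tau^*)^{r+1}\|\bu_{ht}\|_r^2\le K$, so the correct pointwise bound is $\|\pt\bu_h^l\|_1\le K\tau_{l-1}^{-1}$. With this, every choice from~(\ref{nonlin1}) that avoids an uncontrolled $\Delta_h$-norm on $\hbta$ leaves a weight at least $\tau_{l-1}^{-3/4}$ multiplying a mixed factor $\|\hbta_{l-1}\|^{1/2}\|\nabla\hbta_{l-1}\|^{1/2}$ in the remainder $k\sum_l b(\pt\bu_h^l,\hbta_m,\hbta_{l-1})$; after Young the square produces a non-integrable $\tau^{-3/2}$, and the Gronwall-ready form you describe does not come out. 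The short summary: the ``gain from summing in $j$'' you need for the bilinear terms is already delivered by~(\ref{l2ee}) through a plain Cauchy--Schwarz, exactly as you did for the quadratic term --- there is no need for summation by parts.
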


\begin{proof}
Choose $\bphi_h=\hbta_i$ in (\ref{eebenli}) for $n=i$, multiply by $k e^{2\alpha t_i}$ and 
then sum over $1\le i\le n$ to observe as in (\ref{ieens01})
\begin{align}\label{ibta01}
e^{2\alpha t_n}\|\hbta_n\|^2+\mu k\sum_{i=1}^n e^{2\alpha t_i}\|\nabla\hbta_i\|^2 \le 
k\sum_{i=1}^n e^{2\alpha t_i} k\sum_{j=1}^i |\Lambda_h^i(\hbta_i)|.
\end{align}
We observe that
\begin{equation}\label{ibta02}
k\sum_{j=1}^i |\Lambda_h^j(\hbta_i)|= k\sum_{j=1}^i \Big|b(\bu_h^j, \e_j,\hbta_i)+b(\e_j,\bu_h^j,\hbta_i)+b(\e_j,\e_j,\hbta_i)\Big|.
\end{equation}
Use (\ref{nonlin1}), (\ref{err1}) and (\ref{err2}) to obtain
\begin{align}\label{ibta02a}
& k\sum_{i=1}^n e^{2\alpha t_i}k\sum_{i=1}^n |b(\e_j,\e_j,\hbta_i)|\le 
Kk\sum_{i=1}^n e^{2\alpha t_i}k\sum_{j=1}^i\|\e_j\|^{1/2} \|\e_j\|^{3/2}_1
\|\nabla\hbta_i\| \nonumber \\
\le & Kk\sum_{i=1}^n e^{2\alpha t_i}\Big(k\sum_{j=1}^i\|\tilde{\e}_j\|^2\Big)^{1/4} \Big(k\sum_{j=1}^i\|\tilde{\e}_j\|_1^2\Big)^{3/4} \|\nabla\hbta_i\|  \nonumber \\
\le & K_{t_n}k^2(1+\log\frac{1}{k})+\ve k\sum_{i=1}^n e^{2\alpha t_i}\|\nabla\hbta_i\|^2.
\end{align}
Similarly,
\begin{align}\label{ibta02b}
k\sum_{i=1}^n e^{2\alpha t_i}k\sum_{j=1}^i |b(\e_j,\bu_h^j,\hbta_i)|\le 
& Kk\sum_{i=1}^n e^{2\alpha t_i}\Big(k\sum_{j=1}^i\|\tilde{\e}_i\|^2\Big)^{1/2}
\|\nabla\hbta_i\| \nonumber \\
 \le & K_{t_n}k^2+\ve k\sum_{i=1}^n e^{2\alpha t_i}\|\nabla\hbta_i\|^2.
\end{align}
and
\begin{align}\label{ibta02c}
k\sum_{i=1}^n e^{2\alpha t_i} k\sum_{j=1}^i |b(\bu_h^i,\e_i,\bphi_h)| \le k
\sum_{i=1}^n e^{2\alpha t_i} k\sum_{j=1}^i \Big(\frac{1}{2}|((\nabla\cdot\bu_h^i) \e_i,\bphi_h)|+|((\bu_h^i\cdot\nabla)\bphi_h,\e_i)|\Big) \nonumber \\
\le Kk\sum_{i=1}^n e^{2\alpha t_i}\Big(k\sum_{j=1}^i\|\tilde{\e}_i\|^2\Big)^{1/2}
\|\nabla\hbta_i\|\le K_{t_n}k^2+\ve k\sum_{i=1}^n e^{2\alpha t_i}\|\nabla\hbta_i\|^2.
\end{align}
Combining these estimates, namely; (\ref{ibta02a})-(\ref{ibta02c}) and putting
 $\ve=\mu/6$, we conclude the rest of the proof.
\end{proof}
\noindent We present below a Lemma with optimal estimate for $\bta_n$.
\begin{lemma}\label{eebta} 
Under the assumptions of Lemma \ref{estbta}, the following holds:
\begin{align}\label{eebta1}
t_n\|\bta_n\|^2+e^{-2\alpha t_n} k\sum_{i=1}^n \sigma_i\|\bta_i\|_1^2 \le K_{t_n}k^2(1+\log\frac{1}{k}).
\end{align}
\end{lemma}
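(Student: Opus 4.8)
The plan is to mimic the proof of Theorem \ref{l2eebxi} (the $L^2$-estimate for $\bxi_n$) but now carrying the nonlinear term $\Lambda_h^i$ through, and to use the estimate of $\hbta_n$ from Lemma \ref{ibta} to absorb the troublesome memory term exactly as Lemma \ref{ieens} was used in the $\bxi$ case. First I would take $\bphi_h=\sigma_i\bta_i$ in equation \eqref{eebenl} with $n=i$, use the identity $\pt(\sigma_i\|\bta_i\|^2)\le \text{(telescoping term)}+2\sigma_i(\pt\bta_i,\bta_i)$ together with the lower bound $(\pt\bta_i,\bta_i)\ge\frac12\pt\|\bta_i\|^2$, multiply by $k$ and sum over $1\le i\le n$. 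This yields, in analogy with \eqref{l2eebxi01},
\begin{align*}
\sigma_n\|\bta_n\|^2+\Big(2\mu-\frac{e^{2\alpha k}-1}{k\lambda_1}\Big)k\sum_{i=1}^n\sigma_i\|\nabla\bta_i\|^2
&\le e^{2\alpha k}k\sum_{i=2}^{n-1}\|\tbta_i\|^2-2k\sum_{i=1}^n\sigma_i a(q_r^i(\bta),\bta_i)+2k\sum_{i=1}^n\sigma_i\Lambda_h^i(\bta_i).
\end{align*}
The first term on the right is controlled by $K_{t_n}k^2(1+\log\frac1k)$ using \eqref{estbta1}, and the quadrature term $a(q_r^i(\bta),\bta_i)$ is handled verbatim as in \eqref{l2eebxi02}--\eqref{l2eebxi2}: apply summation by parts \eqref{sumbp} to write it in terms of $\hbta$, then bound it by $\ve k\sum_i\sigma_i\|\nabla\bta_i\|^2+Kk\sum_i e^{2\alpha t_i}\|\nabla\hbta_i\|^2$, the last sum being $\le K_{t_n}k^2(1+\log\frac1k)$ by Lemma \ref{ibta}.

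Next I would deal with the nonlinear contribution $2k\sum_{i=1}^n\sigma_i\Lambda_h^i(\bta_i)$. Recalling \eqref{dLbe}, $\Lambda_h^i(\bta_i)=-b(\bu_h^i,\bxi_i,\bta_i)-b(\e_i,\bu_h^i,\bta_i)-b(\e_i,\bxi_i,\bta_i)$. For each piece I would invoke Lemma \ref{nonlin} in the form that puts as much regularity as possible on $\bu_h$ (whose $\bH^1$-norm is uniformly bounded) and keeps $\|\nabla\bta_i\|$ only to the first power, reserving the other half power of $\bta_i$ for a kickback. Concretely, estimates of the type $|b(\bu_h^i,\bxi_i,\bta_i)|\le\ve\|\nabla\bta_i\|^2+K\|\nabla\bxi_i\|^2$, $|b(\e_i,\bu_h^i,\bta_i)|\le\ve\|\nabla\bta_i\|^2+K(\|\nabla\bxi_i\|^2+\|\bta_i\|^2)$, and $|b(\e_i,\bxi_i,\bta_i)|\le\ve\|\nabla\bta_i\|^2+K\|\bta_i\|^2+Kk^{1/2}(1+\log\frac1k)^{1/2}\|\nabla\bxi_i\|^2$ — these are precisely \eqref{estbta02a}--\eqref{estbta02c}. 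After multiplying by $\sigma_i$, summing, and using $\sigma_i\le t_i e^{2\alpha t_i}$ together with Theorem \ref{l2eebxi} (which gives $e^{-2\alpha t_n}k\sum_i\sigma_i\|\nabla\bxi_i\|^2\le Kk^2(1+\log\frac1k)$) and \eqref{estbta1} for the $\sigma_i\|\bta_i\|^2$ terms, the whole nonlinear sum is bounded by $\ve k\sum_i\sigma_i\|\nabla\bta_i\|^2+K_{t_n}k^2(1+\log\frac1k)+Kk\sum_{i=1}^{n}\sigma_i\|\bta_i\|^2$.

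Collecting everything, choosing $\ve$ small (say $\ve=\mu/4$ for the sum of all kickback contributions) and using $0<k<k_{00}$ so that $2\mu-\frac{e^{2\alpha k}-1}{k\lambda_1}\ge\mu$, I obtain
\begin{align*}
\sigma_n\|\bta_n\|^2+\mu k\sum_{i=1}^n\sigma_i\|\nabla\bta_i\|^2\le K_{t_n}k^2\Big(1+\log\frac1k\Big)+Kk\sum_{i=1}^n\sigma_i\|\bta_i\|^2.
\end{align*}
A final application of the discrete Gronwall lemma (in the sequence $\sigma_i\|\bta_i\|^2$) absorbs the last sum at the cost of an extra $e^{Kt_n}$ factor hidden in $K_{t_n}$, and multiplying through by $e^{-2\alpha t_n}$ (using $\sigma_n=t_n e^{2\alpha t_n}$ and $\|\nabla\bta_i\|\ge c\|\bta_i\|_1$ up to the Poincar\'e constant — actually $\|\bta_i\|_1^2\le C\|\nabla\bta_i\|^2$ on $\bJ_h$) gives exactly \eqref{eebta1}. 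The main obstacle I anticipate is the memory term $a(q_r^i(\bta),\bta_i)$ weighted by $\sigma_i$: it is not sign-definite after the weight is introduced, so one genuinely needs the $\hbta$-estimate of Lemma \ref{ibta} rather than a direct bound, and one must be careful that the change-of-order-of-summation manipulation (as in \eqref{pree08}--\eqref{pree09}) produces a geometric factor $k\sum_l e^{-(\delta-\alpha)t_l}\le K$ that is independent of $n$; everything else is a routine repetition of the $\bxi$-analysis.
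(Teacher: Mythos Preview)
Your approach is essentially the paper's own: test \eqref{eebenl} with $\sigma_i\bta_i$, treat the weighted memory term via summation by parts and the $\hbta$-estimate of Lemma~\ref{ibta} exactly as in \eqref{l2eebxi02}--\eqref{l2eebxi2}, and bound the nonlinear term $\Lambda_h^i(\bta_i)$ through \eqref{estbta02a}--\eqref{estbta02c} together with Theorem~\ref{l2eebxi} for $k\sum_i\sigma_i\|\nabla\bxi_i\|^2$.

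There is one slip: you quote \eqref{estbta1} to control the term $e^{2\alpha k}k\sum_{i}\|\tbta_i\|^2$, but \eqref{estbta1} only yields $O\!\big(k(1+\log\frac1k)\big)$, which would destroy the final $O(k^2)$ bound. The correct reference is Lemma~\ref{negbta} (estimate \eqref{negbta1}), which gives $e^{-2\alpha t_n}k\sum_i e^{2\alpha t_i}\|\bta_i\|^2\le K_{t_n}k^2$. The same lemma also lets you dispense with the Gronwall step at the end: since $\sigma_i\le t_n e^{2\alpha t_i}$, the sum $k\sum_i\sigma_i\|\bta_i\|^2$ is already bounded by $t_n\cdot K_{t_n}e^{2\alpha t_n}k^2$, which is how the paper closes the argument. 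Your Gronwall alternative is valid but unnecessary.
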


\begin{proof}
We choose $\bphi_h=\sigma_i\bta_i$ in (\ref{eebenl}) for $n=i$. Multiply the 
resulting equation by $k$ and sum it over $1<i<n$ to find that
\begin{align}\label{eebta01}
\sigma_n\|\bta_n\|^2+2\mu k\sum_{i=1}^n \sigma_i\|\nabla\bta_i\|^2 & \le K(\alpha) k\sum_{i=2}^{n-1}\|\tbta_i\|^2-2k\sum_{i=1}^n a(q_{r}^i(\bta),\sigma_i\bta_i) \nonumber \\
&+2k\sum_{i=1}^n\Lambda_h^i(\sigma_i\bta_i).
\end{align}
As in (\ref{l2eebxi2}), we obtain
\begin{align}\label{eebta02}
2k\sum_{i=1}^n \sigma_i a(q_{r}^i(\bta), \bta_i) \le \ve k\sum_{i=1}^n \sigma_i 
\|\nabla\bta_i\|^2+K k\sum_{i=1}^n e^{2\alpha t_i}\|\nabla\hbta_i\|^2.
\end{align}
Following the proof technique leading to the estimate (\ref{estbta02}), we observe that
\begin{align}\label{eebta03}
2k\sum_{i=1}^n\Lambda_h^i(\sigma_i\bxi_i) \le \ve k\sum_{i=1}^n \sigma_i\|\nabla\bta_i\|^2
+Kk\sum_{i=1}^n\sigma_i \big(\|\nabla\bxi_i\|^2+\|\bta_i\|^2\big).
\end{align}
Substitute (\ref{eebta02})-(\ref{eebta03})  in  (\ref{eebta01})  and this completes the rest of the proof.
\end{proof}

\begin{theorem}\label{l2eebe}
 Under the assumptions of Lemma \ref{estbta}, following holds:
\begin{equation}\label{l2eebe1}
 \|\e_n\| \le K_Tt_n^{-1/2}k(1+\log \frac{1}{k})^{1/2}.
\end{equation}
\end{theorem}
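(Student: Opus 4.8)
The plan is to combine the two intermediate estimates already established for the
split error $\e_n=\bta_n-\bxi_n$. Recall from the error decomposition
(\ref{errsplit}) that $\|\e_n\|\le \|\bta_n\|+\|\bxi_n\|$, so it suffices to bound
each piece by $K_T t_n^{-1/2}k(1+\log\frac1k)^{1/2}$. For the linearized
contribution $\bxi_n$, Theorem \ref{l2eebxi} gives
$t_n\|\bxi_n\|^2\le Kk^2(1+\log\frac1k)$, hence
$\|\bxi_n\|\le K t_n^{-1/2}k(1+\log\frac1k)^{1/2}$, which is already of the
desired form (indeed with a constant independent of $T$). For the nonlinear
contribution $\bta_n$, Lemma \ref{eebta} gives
$t_n\|\bta_n\|^2\le K_{t_n}k^2(1+\log\frac1k)$, so
$\|\bta_n\|\le K_{t_n}^{1/2} t_n^{-1/2}k(1+\log\frac1k)^{1/2}$, and since
$K_{t_n}=Kt_ne^{Kt_n}\le KTe^{KT}=K_T$ for $t_n\le T$, we absorb
$K_{t_n}^{1/2}$ into $K_T$.

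Concretely, I would write: by the triangle inequality and
(\ref{errsplit}),
\[
 \|\e_n\|\le \|\bta_n\|+\|\bxi_n\|.
\]
Then invoke Theorem \ref{l2eebxi} to get
$t_n^{1/2}\|\bxi_n\|\le Kk(1+\log\frac1k)^{1/2}$ and Lemma \ref{eebta} to get
$t_n^{1/2}\|\bta_n\|\le K_{t_n}^{1/2}k(1+\log\frac1k)^{1/2}$. Using the
convention stated just before Lemma \ref{estbta} that $K_T$ denotes $KTe^{KT}$,
and noting $t_n\le T$ implies $K_{t_n}\le K_T$, we conclude
$t_n^{1/2}\|\e_n\|\le K_Tk(1+\log\frac1k)^{1/2}$, which is precisely
(\ref{l2eebe1}) after dividing by $t_n^{1/2}$.

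I do not expect any genuine obstacle here: the theorem is essentially a
bookkeeping corollary of the two hard estimates (Theorem \ref{l2eebxi} for
$\bxi_n$ and Lemma \ref{eebta} for $\bta_n$), which in turn rest on the
``hat operator'' estimates (Lemmas \ref{ieens} and \ref{ibta}) and the negative-norm
estimates (Lemmas \ref{neg} and \ref{negbta}). The only point requiring a word of
care is the exponential-in-time dependence: the $\bxi_n$ bound has a constant
uniform in $T$, whereas the $\bta_n$ bound does not, so the final constant $K_T$
inherits the exponential growth solely through the nonlinear part $\bta_n$, as
anticipated in item (iii) of the introduction. One should also remember that the
restriction on the time step is $0<k<k_{00}$, the minimum of the various $k_0$'s,
as noted in the remark following Lemma \ref{pree}.
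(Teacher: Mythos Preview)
Your proof is correct and matches the paper's own argument essentially verbatim: the paper simply says to combine Theorem \ref{l2eebxi} and Lemma \ref{eebta}, which is exactly the triangle-inequality bookkeeping you carry out. Your additional remarks on the exponential dependence through $\bta_n$ and the time-step restriction are accurate elaborations but not needed for the proof itself.
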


\begin{proof}
Combine the Lemmas \ref{l2eebxi} and \ref{eebta} to complete the rest of the proof.
\end{proof}

\begin{remark}
We need not split the error $\e$ in $\bxi$ and $\bta$ in order to obtain optimal error estimate (\ref{l2eebe1}).
However for optimal error estimate in $L^2$-norm which is uniform in time, we need to split the 
error  $\e_n = \bta_n-\bxi_n.$ 
\end{remark}

\section[Uniform Error]{Uniform Error Estimate}
\se

In this section, we prove the estimate (\ref{l2eebe1}) to be uniform under the uniqueness 
condition $\mu-2N\nu^{-1}\|\f\|_{\infty} >0,$ where $N$ is given as in (\ref{uc}).
We observe that the estimate (\ref{l2eebxi1})
involving $\bxi_n$  is uniform in nature. Hence, we are left to deal with $\bL^2$
estimate of $\bta_n.$

\begin{lemma}\label{btaunif}
Let the assumptions of Lemma \ref{estbta} hold. Under the uniqueness condition 
$\mu-2N\nu^{-1}\|\f\|_{\infty} >0$ and under the assumption
$$ 1+(\frac{\mu\lambda_1}{2})k > e^{2\alpha k}, $$
which holds for $0<k<k_0,~k_0>0$, the following uniform estimate hold:
\begin{equation}\label{btaunif1}
 \|\bta_n\| \le K\tau_n^{-1/2}k(1+\log \frac{1}{k})^{1/2},
\end{equation}
where $\tau_n=\min \{1,t_n\}$.
\end{lemma}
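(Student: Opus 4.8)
The strategy is to revisit the estimate of Lemma \ref{estbta} for $\|\bta_n\|$, but this time exploit the uniqueness condition $\mu - 2N\nu^{-1}\|\f\|_\infty > 0$ so as to absorb the ``bad'' term $\|\bta_i\|^2$ on the right-hand side of \eqref{estbta01} into the dissipative term rather than appealing to the discrete Gronwall Lemma (which is what introduced the factor $K_{t_n}=KTe^{KT}$). Concretely, I would start from \eqref{eebenl} with $\bphi_h = \sigma_i\bta_i$ (where $\sigma_i = t_i e^{2\alpha t_i}$, as in Lemma \ref{eebta}), or first with $\bphi_h=e^{2\alpha t_i}\bta_i$ to get a global-in-time $\bL^2$ bound and then with the weight $\sigma_i$ to recover the $t_n^{-1}$ singularity. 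After multiplying by $k$ and summing, the nonlinear contribution $\Lambda_h^i(\bta_i)$ from \eqref{estbta02} splits as $-b(\bu_h^i,\bxi_i,\bta_i) - b(\e_i,\bu_h^i,\bta_i) - b(\e_i,\bxi_i,\bta_i)$. The terms containing $\bxi_i$ are treated exactly as in Lemma \ref{estbta} (using Lemma \ref{pree} to bound $\|\nabla\bxi_i\|^2$ and Theorem \ref{l2eebxi} for $\sigma_i\|\nabla\bxi_i\|^2$), and they contribute a harmless $K k^2(1+\log\frac1k)e^{2\alpha t_n}$; the quadrature-integral term $a(q_r^i(\bta),\sigma_i\bta_i)$ is handled by summation by parts as in \eqref{l2eebxi2} together with Lemma \ref{ibta}. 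The critical term is $b(\e_i,\bu_h^i,\bta_i) = b(\bta_i,\bu_h^i,\bta_i) - b(\bxi_i,\bu_h^i,\bta_i)$; the first piece is $b(\bta_i,\bu_h^i,\bta_i)$, which by the definition of $N$ in \eqref{uc} is bounded by $N\|\nabla\bu_h^i\|\,\|\nabla\bta_i\|^2$.

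The key quantitative input is then a uniform-in-time bound $\|\nabla\bu_h^i\| \le \nu^{-1}\|\f\|_\infty$ (up to a controllable perturbation) coming from the a~priori analysis of the continuous/semidiscrete Oldroyd problem, which, combined with the uniqueness condition $\mu > 2N\nu^{-1}\|\f\|_\infty$, gives a strictly positive constant $\mu_0 := \mu - N\sup_i\|\nabla\bu_h^i\| > 0$. Thus $b(\bta_i,\bu_h^i,\bta_i)$ can be absorbed into $\mu\|\nabla\bta_i\|^2$, leaving $\mu_0\|\nabla\bta_i\|^2$ on the left. The cross term $b(\bxi_i,\bu_h^i,\bta_i)$ is estimated by $\varepsilon\|\nabla\bta_i\|^2 + K\|\nabla\bxi_i\|^2$ and absorbed/controlled via Lemma \ref{pree}. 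After choosing the various $\varepsilon$'s small (so the remaining dissipation coefficient, say $\mu_0/2$, is still positive) and using $\|\nabla\bta_i\|^2 \ge \lambda_1\|\bta_i\|^2$, the inequality \eqref{estbta01} becomes, schematically,
\begin{equation*}
\|\tbta_n\|^2 + c\,k\sum_{i=1}^n \|\nabla\tbta_i\|^2 \le k\sum_{i=1}^{n-1}\frac{e^{2\alpha k}-1}{k}\|\tbta_i\|^2 + K k^2\Big(1+\log\frac1k\Big)e^{2\alpha t_n},
\end{equation*}
and now, unlike in Lemma \ref{estbta}, there is \emph{no} undamped $\|\bta_i\|^2$ on the right: the first sum is controlled because $(e^{2\alpha k}-1)/k \le 2\alpha + O(k) < c\lambda_1$ for $k$ small (this is exactly where the hypothesis $1+(\mu\lambda_1/2)k > e^{2\alpha k}$ is used). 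Multiplying by $e^{-2\alpha t_n}$ yields the global estimate $\|\bta_n\|^2 \le Kk^2(1+\log\frac1k)$ valid for all $t_n$.

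To recover the stated singular weight $\tau_n^{-1/2}$, I would rerun the same computation with $\bphi_h = \sigma_i\bta_i$, $\sigma_i = t_i e^{2\alpha t_i}$, exactly along the lines of Lemma \ref{eebta}: the extra terms produced by $\pt\sigma_i$ are of the form $k\sum \|\tbta_i\|^2$, now bounded by $Kk^2(1+\log\frac1k)$ from the global estimate just obtained; the nonlinear and quadrature terms are handled as above with the uniqueness condition supplying the positive dissipation constant; Lemma \ref{ibta} and Theorem \ref{l2eebxi} furnish the $\bxi$-dependent contributions uniformly. This gives $t_n\|\bta_n\|^2 \le Kk^2(1+\log\frac1k)$ for $t_n \le 1$, and combined with the global bound for $t_n \ge 1$ one concludes $\|\bta_n\| \le K\tau_n^{-1/2}k(1+\log\frac1k)^{1/2}$ with $K$ independent of $T$. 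The main obstacle, and the only place real care is needed, is establishing and cleanly invoking the uniform bound on $\|\nabla\bu_h^i\|$ in terms of $\nu^{-1}\|\f\|_\infty$ so that the uniqueness condition genuinely produces a time-independent positive coefficient in front of $\|\nabla\bta_i\|^2$; everything else is a weighted repetition of the arguments already used in Lemmas \ref{estbta}, \ref{eebta} and \ref{ibta}, with discrete Gronwall replaced by direct absorption.
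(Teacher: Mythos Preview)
Your plan has the right idea---absorb $b(\bta_i,\bu_h^i,\bta_i)$ into the dissipation via the uniqueness condition rather than Gronwall---but it contains a genuine gap at exactly the point you flag as ``the main obstacle.'' The quantity $\mu_0 := \mu - N\sup_i\|\nabla\bu_h^i\|$ need not be positive: the a~priori bound on $\|\nabla\bu_h(t)\|$ (Lemma~\ref{dth2} or Lemma~\ref{unif.est1}) depends on the initial data $M_0$, not on $\nu^{-1}\|\f\|_\infty$, so the uniqueness hypothesis $\mu>2N\nu^{-1}\|\f\|_\infty$ does \emph{not} imply $\mu>N\sup_i\|\nabla\bu_h^i\|$. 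What is true (and what the paper uses, citing \cite{WHS}) is only the asymptotic statement $\limsup_{t\to\infty}\|\nabla\bu_h(t)\|\le \nu^{-1}\|\f\|_\infty$. Consequently you cannot sum from $i=1$ and absorb for all $i$; the absorption works only for $i>i_0$ with $i_0$ large.

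The paper's fix is to split the sum at such an $i_0$ (chosen so that $\tau_i=1$ for $i\ge i_0$). For $i>i_0$ the asymptotic bound gives $2|b(\bta_i,\bu_h^i,\bta_i)|\le 2N\nu^{-1}\|\f\|_\infty\|\nabla\bta_i\|^2$, which is absorbed by $\mu\|\nabla\bta_i\|^2$; the quadrature double sum is then extended back to $i=1$ so that positivity applies directly (no summation by parts needed), and the compensating tail $k\sum_{i\le i_0}e^{2\alpha t_i}a(q_r^i(\bta),\bta_i)$ together with $\|\bta_{i_0}\|^2$ are controlled by the \emph{non-uniform} estimates (Lemmas~\ref{negbta}, \ref{eebta} and the auxiliary Lemma~\ref{bta1}), which is harmless because $i_0$ is fixed. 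Note also that your proposed treatment of the quadrature term via \eqref{l2eebxi2} and Lemma~\ref{ibta} would reintroduce the constant $K_{t_n}$, since Lemma~\ref{ibta} is itself non-uniform; the paper avoids this entirely by using the weight $\sigma_i=\tau_i e^{2\alpha t_i}$ (so $\sigma_i=e^{2\alpha t_i}$ for $i>i_0$) and relying on positivity of the rectangle rule.
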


\begin{proof}
Choose $\bphi_h=\bta_i$ in (\ref{eebenl}) for $n=i$ to obtain
\begin{align}\label{001}
\pt\|\bta_i\|^2+2\mu \|\nabla\bta_i\|+2 a(q^i_r(\bta),\bta_i) \le 2\Lambda^i_h(\bta_i).
\end{align}
From (\ref{dLbe}), we find that
\begin{align}\label{002}
\Lambda^i_h(\bta_i)= -b(\bxi_i,\bu^i_h,\bta_i)-b(\bu_h^i,\bxi_i,\bta_i)
-b(\bta_i,\bu^i_h,\bta_i)-b(\e_i,\bxi_i,\bta_i).
\end{align}
From the definition of $N$ (see (\ref{uc})), we note that
\begin{align}\label{004}
|b(\bta_i,\bu^i_h,\bta_i)| \le N\|\nabla\bta_i\|^2\|\nabla\bu^i_h\|.
\end{align}
Again with the help of (\ref{nonlin1}), we obtain
\begin{align}\label{005}
|b(\bxi_i,\bu^i_h,\bta_i)|&+|b(\bu_h^i, \bxi_i,\bta_i)|\le K\|\bxi\|\|\nabla 
\bta_i\|\|\nabla\bu^i_h\|^{1/2} \|\td_h\bu^i_h\|^{1/2}  \nonumber \\ 
& \le K\tau_i^{-3/4}k(1+\log\frac{1}{k})^{1/2}\|\nabla\bta_i\|.
\end{align}
Since $\|\e_i\|_2 \le \|\bu_h^i\|_2+\|\bU^i\|_2 \le
Kt_i^{-1/2}$, we conclude that
\begin{equation}\label{005a}
|b(\e_i,\bxi_i,\bta_i)| \le K\tau_i^{-3/4}k(1+\log\frac{1}{k})^{1/2}\|\nabla\bta_i\|.
\end{equation}
Therefore, from (\ref{004})-(\ref{005a}), we find that
\begin{align}\label{006}
|\Lambda^i_h(\bta_i)| \le N\|\nabla\bta_i\|^2\|\nabla\bu^i_h\|+K\tau_i^{-3/4}k 
(1+\log\frac{1}{k})^{1/2}\|\nabla\bta_i\|.
\end{align}
We recall from \cite{WHS} that
$$ \limsup_{t\to\infty} \|\nabla\bu_h(t)\| \le \nu^{-1}\|\f\|_{\infty}, $$
and therefore, for large enough $i\in \mathbb{N}$, say $i>i_0$ we obtain from (\ref{006})
\begin{align}\label{007}
 |2\Lambda_h^i(\e_i)| \le 2N\nu^{-1}\|\f\|_{\infty}\|\nabla\bta_i\|^2+K\tau_i^{-3/4} 
k (1+\log\frac{1}{k})^{1/2}\|\nabla\bta_i\|.
\end{align}
With $\sigma_i=\tau_i e^{2\alpha t_i},$ we multiply (\ref{001}) by $k\sigma_i$ and sum 
over $i_0+1$ to $n$ to obtain
\begin{align}\label{008}
k\sum_{i=i_0+1}^n e^{2\alpha t_i} \{\pt\|\bta_i\|^2+2\mu \|\nabla\bta_i\|^2\}
+2k\sum_{i=i_0+1}^n e^{2\alpha t_i} a(q^i_r(\bta),\bta_i) \nonumber \\
\le 2k\sum_{i=i_0+1}^n \sigma_i \Lambda^i_h(\bta_i).
\end{align}
Without loss of generality, we can assume that $i_0$ is big enough, so that, by definition
$\tau_i=1$ for $i\ge i_0$. We rewrite (\ref{008}) as follows:
\begin{align}\label{008a}
k\sum_{i=i_0+1}^n e^{2\alpha t_i} \{\pt\|\bta_i\|^2+2\mu \|\nabla\bta_i\|^2\}
+2k\sum_{i=1}^n e^{2\alpha t_i} a(q^i_r(\bta),\bta_i) \nonumber \\
\le 2k\sum_{i=i_0+1}^n \sigma_i \Lambda^i_h(\bta_i)+2k\sum_{i=1}^{i_0} e^{2\alpha t_i} a(q^i_r(\bta),\bta_i).
\end{align}
We observe that the last term on the left hand-side of (\ref{008a}) is non-negative and hence is dropped.
\begin{align}\label{009}
e^{2\alpha t_n}\|\bta_n\|^2-\sum_{i=i_0+1}^{n-1} e^{2\alpha t_i} (e^{2\alpha k}-1)\|\bta_i\|^2+\mu k\sum_{i=i_0+1}^n e^{2\alpha i}  \|\nabla\bta_i\|^2 \nonumber \\
+ k\sum_{i=i_0+1}^n (\mu-2N\nu^{-1}\|\f\|_{\infty}) e^{2\alpha i}  \|\nabla\bta_i\|^2 
\le e^{2\alpha t_{i_0}}\|\bta_{i_0}\|^2+2k\sum_{i=1}^{i_0} e^{2\alpha t_i} q^i_r(\|\nabla\bta_i\|)\|\nabla\bta_i\| \nonumber \\
+Kk^2\sum_{i=i_0+1}^n \tau_i^{1/4} e^{2\alpha t_i} (1+\log\frac{1}{k})^{3/4}\|\nabla\bta_i\|.
\end{align}
Under the assumption
$$ 1+(\frac{\mu\lambda_1}{2})k > e^{2\alpha k}, $$
which holds for $0<k<k_0,~k_0>0$ with $0 \le \alpha \le \min \big\{\delta, 
\frac{\mu\lambda_1}{2}\big\},$ we find that
\begin{align}\label{010}
\frac{\mu}{2} k\sum_{i=i_0+1}^n e^{2\alpha t_i}  \|\nabla\bta_i\|^2-\sum_{i=i_0+1}^{n-1} 
e^{2\alpha t_i} (e^{2\alpha k}-1)\|\bta_i\|^2 \nonumber \\
=k\sum_{i=i_0+1}^{n} \big(\frac{\mu}{2}-\frac{e^{2\alpha k}-1}{k\lambda_1}\big)\sigma_i
\|\nabla\bta_i\|^2 \ge 0.
\end{align}
Due to uniqueness condition, we arrive at the following:
\begin{align}\label{011}
 k\sum_{i=i_0+1}^n (\mu-2N\nu^{-1}\|\f\|_{\infty}) e^{2\alpha i}  \|\nabla\bta_i\|^2 
\ge 0.
\end{align}
Following the proof techniques of (\ref{pree08})-(\ref{pree09}), we obtain
\begin{align}\label{012}
2k\sum_{i=1}^{i_0} e^{2\alpha t_i} q^i_r(\|\nabla\bta_i\|) \|\nabla\bta_i\| \le K 
k\sum_{i=1}^{i_0} e^{2\alpha t_i} \|\nabla\bta_i\|^2.
\end{align}
And
\begin{align}\label{013}
Kk^2\sum_{i=i_0+1}^n \tau_i^{1/4} e^{2\alpha t_i} (1+\log\frac{1}{k})^{1/2} 
\|\nabla\bta_i\|\le \frac{\mu}{4} k\sum_{i=i_0+1}^n \sigma_i  \|\nabla\bta_i\|^2
\nonumber \\
+Kk^2 (1+\log\frac{1}{k}) k\sum_{i=i_0+1}^n e^{2\alpha t_i} \tau_i^{-1/2}.
\end{align}
Incorporate (\ref{010})-(\ref{013}) in (\ref{009}), use Lemma \ref{negbta} and
(\ref{l2ee}) to observe that
\begin{align}\label{014}
e^{2\alpha t_n}\|\bta_n\|^2+k\sum_{i=1}^n \sigma_i  \|\nabla\bta_i\|^2 & \le K_{t_0} 
k^2+Kk^2 (1+\log\frac{1}{k}) e^{2\alpha t_n} \nonumber \\
&+Kk\sum_{i=1}^{i_0} e^{2\alpha t_i} \|\nabla\bta_i\|^2.
\end{align}
Multiply by $e^{-2\alpha t_i}$ and under the assumption that
\begin{equation}\label{015}
k\sum_{i=1}^{t_0} e^{2\alpha t_i} \|\nabla\bta_i\|^2 \le
K_{t_0}t_{t_0}^{-1}k^2(1+\log \frac{1}{k}).
\end{equation}
we conclude that
$$ \|\bta_n\| \le Kt_n^{-1/2}k(1+\log\frac{1}{k})^{1/2}, $$
since $i_0>0$ is fixed.
Combining this result with (\ref{l2eebxi1}) we complete the rest of the proof.
\end{proof}
\noindent We are now left with the proof (\ref{015}).
\begin{lemma}\label{bta1}
Under the assumption of Lemma \ref{estbta}, the following holds
$$ k\sum_{i=1}^{i_0} e^{2\alpha t_i} \|\nabla\bta_i\|^2 \le 
K_{t_{i_0}} t_{i_0}^{-1}k^2(1+\log \frac{1}{k}). $$
\end{lemma}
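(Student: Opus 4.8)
The plan is to obtain this as a short corollary of Lemma \ref{eebta}, and to indicate a self-contained alternative in case one prefers not to lean on the full $\bH^1$-part of that lemma.

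First route. Apply Lemma \ref{eebta} with $n=i_0$. The term $t_{i_0}\|\bta_{i_0}\|^2$ on the left is non-negative, so after multiplying through by $e^{2\alpha t_{i_0}}$ one gets
$$k\sum_{i=1}^{i_0}\sigma_i\|\bta_i\|_1^2\le e^{2\alpha t_{i_0}}K_{t_{i_0}}\,k^2\big(1+\log\tfrac1k\big),$$
with $\sigma_i=\tau_i e^{2\alpha t_i}$, $\tau_i=\min\{1,t_i\}$. Since $t_i=ik$ is non-decreasing, $\tau_i\ge\tau_1=\min\{1,k\}=k$, and $\|\bta_i\|_1^2\ge\|\nabla\bta_i\|^2$, so $e^{2\alpha t_i}\|\nabla\bta_i\|^2=\tau_i^{-1}\sigma_i\|\nabla\bta_i\|^2\le k^{-1}\sigma_i\|\bta_i\|_1^2$; summing,
$$k\sum_{i=1}^{i_0}e^{2\alpha t_i}\|\nabla\bta_i\|^2\le \frac1k\, e^{2\alpha t_{i_0}}K_{t_{i_0}}k^2\big(1+\log\tfrac1k\big)=e^{2\alpha t_{i_0}}K_{t_{i_0}}\,k\big(1+\log\tfrac1k\big).$$
Finally, since $i_0$ is fixed, $k=i_0\,t_{i_0}^{-1}k^2$, so the right-hand side equals $\big(i_0e^{2\alpha t_{i_0}}K_{t_{i_0}}\big)t_{i_0}^{-1}k^2(1+\log\frac1k)$, and the bracket is absorbed into a constant of the form $K_{t_{i_0}}$, which is the claimed bound.

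Second route (self-contained). If one prefers to avoid the $\bH^1$-part of Lemma \ref{eebta}, re-run its proof on the window $1\le i\le i_0$ with the plain weight $e^{2\alpha t_i}$. Test (\ref{eebenl}) at level $i$ by $\bphi_h=e^{2\alpha t_i}\bta_i$, use $(\pt\bphi^i,\bphi^i)\ge\frac12\pt\|\bphi^i\|^2$, multiply by $k$ and sum over $1\le i\le i_0$ (recall $\bta_0=0$); the exponentially weighted quadrature double sum $k\sum_i e^{2\alpha t_i}a(q_r^i(\bta),\bta_i)$ is non-negative by the positivity of the right rectangle rule (\ref{rrp}), exactly as in the proofs of Lemmas \ref{stb} and \ref{pree}, so it is dropped. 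This gives
$$\|\tbta_{i_0}\|^2+2\mu k\sum_{i=1}^{i_0}\|\nabla\tbta_i\|^2\le \frac{e^{2\alpha k}-1}{k}\,k\sum_{i=1}^{i_0-1}\|\tbta_i\|^2+2k\sum_{i=1}^{i_0}e^{2\alpha t_i}\Lambda_h^i(\bta_i),$$
$\tbta_i=e^{\alpha t_i}\bta_i$. The first right-hand term is $O(k)\cdot k\sum_ie^{2\alpha t_i}\|\bta_i\|^2\le K_{t_{i_0}}k^2$ by Lemma \ref{negbta}. For $\Lambda_h^i(\bta_i)$ one uses the splitting (\ref{estbta02}) together with (\ref{nonlin1}), the bounds $\|\nabla\bu_h^i\|\le K$ and $\tau_i^{1/2}\|\td_h\bu_h^i\|\le K$ (Lemmas \ref{dth2}, \ref{unif.est1}), the $\bta$-bounds of Lemmas \ref{estbta} and \ref{negbta}, and the $\bxi$-bounds of Lemma \ref{pree} and Theorem \ref{l2eebxi}: after a kick-back of the $\|\nabla\bta_i\|$-contributions, the surviving terms are controlled by $k\sum_ie^{2\alpha t_i}\|\bta_i\|^2\le K_{t_{i_0}}k^2$ and by $k\sum_{i=1}^{i_0}e^{2\alpha t_i}\|\nabla\bxi_i\|^2$, the latter bounded through Theorem \ref{l2eebxi} and, again, $\tau_i\ge\tau_1=k$ (this is where the factor $t_{i_0}^{-1}$, i.e.\ $k^{-1}$ up to a fixed constant, enters). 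A discrete Gronwall argument, as at the end of the proofs of Lemmas \ref{estbta} and \ref{btaunif}, then closes the estimate.

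Main obstacle. In the self-contained route the only delicate point is the one already met in Lemma \ref{estbta}: the trilinear terms of $\Lambda_h^i$ that are linear in the linearization error $\bxi_i$, such as $b(\bu_h^i,\bxi_i,\bta_i)$. The error $\bxi$ is $O(k)$ only in $L^2(\bL^2)$ (Lemma \ref{neg}) and is merely bounded, not small, in $\bH^1$ (Lemma \ref{pree}, second estimate), so a naive estimate loses the optimal power of $k$; one must absorb $\|\nabla\bxi_i\|$ against the $\sigma$-weighted bound of Theorem \ref{l2eebxi}, or, where that is not enough, integrate by parts in time via (\ref{sumbp}) to replace $\bxi_i$ by $\hbxi_i=k\sum_{j\le i}\bxi_j$ and invoke Lemma \ref{ieens}, while also treating the non-convolution quadrature term $a(q_r^i(\bta),\bta_i)$ by the summation-by-parts manipulations of (\ref{pree08})--(\ref{pree09}). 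Bookkeeping the powers of $k$, of $1+\log\frac1k$, and of the fixed quantity $t_{i_0}$ through these substitutions is essentially the whole job; with the first route, there is nothing to do beyond invoking Lemma \ref{eebta}.
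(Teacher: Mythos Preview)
Your first route contains a genuine gap. The integer $i_0$ is \emph{not} fixed independently of $k$: it is chosen (in the proof of Lemma~\ref{btaunif}) so that $\|\nabla\bu_h(t_i)\|$ is close to its limiting value for $i>i_0$, and the paper explicitly takes $i_0$ large enough that $\tau_i=1$ for $i\ge i_0$, i.e.\ $t_{i_0}\ge 1$. What is bounded is the \emph{time} $t_{i_0}$ (by some data-dependent $T_0\ge 1$), whereas $i_0=t_{i_0}/k$ grows like $1/k$. Hence your absorption of $i_0$ into a constant of the form $K_{t_{i_0}}$ is illegitimate, and the identity $k=i_0\,t_{i_0}^{-1}k^2$ buys nothing: your bound stays at order $k(1+\log\tfrac1k)$, one power of $k$ short of the claim. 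The same misidentification $t_{i_0}^{-1}\approx k^{-1}$ reappears in your second route when you bound $k\sum_i e^{2\alpha t_i}\|\nabla\bxi_i\|^2$ via $\tau_i\ge k$ and Theorem~\ref{l2eebxi}; that step again loses a full power of $k$.

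The paper avoids this loss by \emph{not} passing through the $\sigma$-weighted estimates at all. It returns to the basic energy identity (\ref{estbta01}) for $\bta$, but now treats the nonlinearity with the alternative decomposition
\[
\Lambda_h^i(\bta_i)=-b(\bu_h^i,\e_i,\bta_i)-b(\e_i,\bU^i,\bta_i),
\]
which involves the full error $\e_i$ rather than $\bxi_i$. Using (\ref{nonlin1}) together with the discrete $\bH^2$ bounds $(\tau^*)^{1/2}\|\td_h\bu_h^i\|\le K$, $(\tau^*)^{1/2}\|\td_h\bU^i\|\le K$, each trilinear term is controlled by $\tfrac{\mu}{8}\|\nabla\bta_i\|^2+K\|\e_i\|^2(\|\td_h\bu_h^i\|+\|\td_h\bU^i\|)$. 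The point is that the already-proved optimal pointwise bound $\|\e_i\|\le K_T t_i^{-1/2}k(1+\log\tfrac1k)^{1/2}$ from Theorem~\ref{l2eebe} then feeds in two powers of $k$ directly, without any $\tau_i^{-1}$ conversion. The remaining lower-order term $\sum (e^{2\alpha k}-1)\|\tbta_i\|^2$ is handled by Lemma~\ref{negbta}. Your second-route sketch, based on the splitting (\ref{estbta02}) with separate $\bxi$- and $\bta$-pieces, can in principle be pushed through with the summation-by-parts/$\hbxi$ machinery you mention, but it is substantially more involved than the paper's argument and, as written, still relies on the flawed $\tau_i\ge k$ step.
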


\begin{proof}
In (\ref{estbta01}), we use
\begin{align*}
\Lambda^i_h(\bta_i) &= -b_h(\bu^i_h,\e_i,\bta_i)-b_h(\e_i,\bU^i,\bta_i) \nonumber \\
& \le \frac{\mu}{4}\|\nabla\bta_i\|^2+K\|\e_i\|^2(\|\td_h\bu^i_h\|+\|\td_h\bU^i\|),
\end{align*}
along with Lemma \ref{negbta} and Theorem \ref{l2eebe} to arrive at 
\begin{align}\label{bta02}
\|\tbta_{i_0}\|^2 +\mu k\sum_{i=1}^{i_0} \|\nabla\tbta_i\|^2\le K_{t_{i_0}}k^2 +K_{t_{i_0}}t_{i_0}^{-1}k^2(1+\log \frac{1}{k}) k\sum_{i=1}^{i_0} e^{2\alpha t_i} t_i^{-1/2}.
\end{align}
This completes the rest of the proof.
\end{proof}
\section{Conclusion}
\se
In this paper, we have discussed optimal error estimates for the backward Euler
method employed to the Oldroyd model with non-smooth initial data, the is,
$\bu_0\in \bJ_1$. We have proved both optimal and uniform error estimate for
the velocity. Uniform estimate is proved under uniqueness condition. The error
analysis for the non-smooth initial data tells us that we need a few more proof
techniques than the smooth case and proofs are more involved.



\begin{thebibliography}{20}

\bibitem{AS89}
Agranovich, Yu. Ya. and Sobolevskii, P. E. ,
{\it Investigation of viscoelastic fluid mathematical model},
RAC. Ukranian SSR. Ser. A, 10 (1989), 71-74.

\bibitem{AO}
Akhmatov, M. M. and Oskolkov, A. P. ,
{\it On convergent difference schemes for the equations of motion of an Oldroyd fluid},
J. Soviet Math., 47 (1989), 2926-2933.

\bibitem{BF}
Brezzi, F. and Fortin, M. , 
{\it Mixed and Hybrid finite element methods},
Springer-Verlag, New York, 1991.

\bibitem{BP}
Bercovier, M. and Pironneau, O. ,
{\it Error estimates for finite element solution of the Stokes problem in the primitive 
variables},
Numer. Math., 33 (1979), 211-224.

\bibitem{EO92}
Emel'yanova, D. V. and Oskolkov, A. P. ,
{\it Certain nonlocal problems for two-dimensional equations of motion of Oldroyd fluids}, 
Zap. Nauchn. Sem. Leningrad. Otdel. Mat. Inst. Steklov. (LOMI) 189 (1991), 101-121;
English translation in J. Soviet Math. 62 (1992), 3004-3016.

\bibitem{GR}
Girault, V. and Raviart, P. A. ,
{\it Finite Element Approximation of the Navier-Stokes Equations},
Lecture Notes in Mathematics No. 749, Springer , New York, 1980.

\bibitem{G11}
Goswami, D., {\it Finite Element Approximation to the Equations of Motion Arising in Oldroyd Viscoelastic Model of Order One},
Ph.D. Dissertation, Department of Mathematics, IITBombay (2011).

\bibitem{GP11}
Goswami, D. and Pani, A. K.,
{\it A Priori Error Estimates for Semidiscrete Finite Element Approximations to the 
Equations of Motion Arising in Oldroyd Fluids of Order One},
Int. J. Numer. Anal. Model., vol 8, no 2 (2011), 324-352.

\bibitem{HLST}
He, Y. , Lin, Y. , Shen, S. S. P. and Tait, R. ,
{\it On the convergence of Viscoelastic fluid flows to a steady state},
Adv. Differential Equations 7 (2002), 717-742.

\bibitem{HLSST}
He, Y. , Lin, Y. , Shen, S.S.P. , Sun, W. and Tait, R. ,
{\it Finite element approximation for the viscoelastic fluid motion problem},
J. Comp. Appl. Math. 155 (2003), 201-222.

\bibitem{HR82}
Heywood, J. G. and Rannacher, R. ,
{\it Finite element approximation of the nonstationary Navier-Stokes problem: I. 
Regularity 
of solutions and second order error estimates for spatial discretization},
SIAM J. Numer. Anal. 19 (1982), 275-311.

\bibitem{KrKO91}
Karzeeva, N. A., Kotsiolis, A. A. and Oskolkov, A. P. , 
{\it On dynamical system generated by initial-boundary value problems for the equations 
of motion of linear viscoelastic  fluids},
(Russian) Trudy Mat. Inst. Steklov. 188 (1990), 59-87; 
English translation in Proc. Steklov Inst. Math. (1991), no. 3, 73-108.
\bibitem{KOS92}
Kotsiolis, A. A. , Oskolkov, A. P. and Shadiev, R. D. , {\it A priori 
estimate on the semiaxis $t \ge 0$ for the solutions of the equations of 
motion of linear viscoelastic fluids with an infinite Dirichlet integral, 
and their applications}, (Russian. English
summary) Zap. Nauchn. Sem. Leningrad. Otdel. Mat. Inst. Steklov. (
LOMI) 182 (1990),
86-101; English translation in J. Soviet Math. 62 (1992), no. 3, 2777-2788.
\bibitem{MT}
McLean, W. and Thom\'{e}e, V., {\it Numerical solution of
an evolution equation with a positive type memory term},
J. Austral. Math. Soc. Ser. B 35 (1993), 23-70.
\bibitem{Old}
Oldroyd, J. G. , {\it Non-Newtonian flow of liquids and solids. Rheology: Theory
and Applications}, Vol. I (F. R. Eirich, Ed.), AP, New York (1956), 653-682.
\bibitem{Os89}
Oskolkov, A. P. , 
{\it Initial boundary value problems for the equations of motion of Kelvin-Voigt fluids and 
Oldroyd fluids}, 
(Russian) Trudy Mat. Inst. Steklov. 179 (1988), 126-164;
 English translated in Proc. Steklov. Inst. Math. (1989), no. 2, 137-182.
\bibitem{PS98}
Pani, A. K. and Sinha, R. K.,
{\it On the backward {E}uler method for time dependent parabolic integro-differential 
equations with nonsmooth initial data},
J. Integral Equations Appl. 10 (1998), 219-249.
\bibitem{PS198}
Pani, A. K. and Sinha, R. K.,
{\it Quadrature based finite element approximations to time dependent parabolic equations
with nonsmooth initial data},
Calcolo 35 (1998),  225-248.
\bibitem{PY05}
Pani, A. K. and Yuan, J. Y. , {\it Semidiscrete finite element Galerkin
approximations to the equations of motion arising in the Oldroyd model}, IMA J.
Numer. Anal. 25 (2005), 750-782.
\bibitem{PYD06}
Pani, A. K. , Yuan, J. Y. and Damazio, P. , {\it On a linearized backward Euler
method for the equations of motion arising in the Oldroyd fluids of order one},
SIAM J. Numer. Anal. 44 (2006), 804-825.
\bibitem{S}
Sobolevskii, P. E. , {\it Stabilization of viscoelastic fluid motion (Oldroyd's
mathematical model)}, Diff. Integ. Eq., 7(1994),  1597-1612.
\bibitem{temam}
Temam, R. , 
{\it  Navier-Stokes Equations, Theory and Numerical Analysis}, 
North-Holland , Amsterdam, 1984.
\bibitem{TZ89}
Thom\'{e}e, V. and  Zhang, N. -Y., 
{\it Error estimates for semidiscrete finite element methods for parabolic 
integro-differential equations},
 Math. Comp. 53 (1989),  121-139.
 \bibitem{WHL}
 Wang, K., He, Y. and Lin, Y.,
 {\it Long time numerical stability and asymptotic analysis for the viscoelastic Oldroyd flows},
 Disc. Cont. Dyn. Sys. Ser. B, 17(2012), 1551-1573.
\bibitem{WHS}
Wang, K. and He, Y. and Shang, Y.,
{\it Fully discrete finite element method for the viscoelastic fluid motion equations},
Disc. Cont. Dyn. Sys. Ser. B, 13 (2010), 665-684.


\end{thebibliography}
\end{document}